\documentclass[a4paper, 10pt, parskip=half]{scrartcl}
\usepackage[utf8]{inputenc}
\usepackage[T1]{fontenc}
\usepackage{lmodern}
\usepackage{csquotes}
\usepackage{amsmath}
\usepackage{amssymb}
\usepackage{amsthm}
\usepackage{amsfonts}
\usepackage{dsfont}
\usepackage{mathtools}
\usepackage{bbm}
\usepackage{marginnote}
\usepackage{caption}
\usepackage{subcaption}
\usepackage{tikz}
\usepackage{hyperref}
\hypersetup{%
  colorlinks=true,
  linkcolor=black,
  citecolor=black,
  urlcolor=blue,
  pdftitle={Absence of critical mass phenomena in one-dimensional critical quasilinear Keller--Segel systems},
  pdfauthor={Xinru Cao and Mario Fuest},
  pdfkeywords={},
  bookmarksopen=true,
}

\allowdisplaybreaks

\usepackage[numbers, compress]{natbib}

\newcommand{\R}{\mathbb{R}}

\newcommand{\N}{\mathbb{N}}

\newcommand{\mc}[1]{\mathcal{#1}}
\newcommand{\ur}[1]{\mathrm{#1}}
\newcommand{\ure}{\ur{e}}

\ifdefined\labelenumi%
  \renewcommand{\labelenumi}{(\roman{enumi})}
  
\fi

\newcommand{\eps}{\varepsilon}

\DeclareMathOperator{\id}{id}

\newcommand{\defs}{\coloneqq}
\newcommand{\sfed}{\eqqcolon}

\newcommand{\nea}{\nearrow}

\newcommand{\ol}{\overline}

\newcommand{\dx}{\,\mathrm{d}x}
\newcommand{\ds}{\,\mathrm{d}s}

\newcommand{\dsigma}{\,\mathrm{d}\sigma}
\newcommand{\drho}{\,\mathrm{d}\rho}

\newcommand{\ddt}{\frac{\mathrm{d}}{\mathrm{d}t}}

\newcommand{\embed}{\hookrightarrow}

\newcommand{\hp}{\hphantom}
\newcommand{\pe}{\mathrel{\hp{=}}}

\newcommand{\tmax}{T_{\max}}

\newcommand{\intom}{\int_0^1}

\newcommand{\intnstom}{\int_0^t \int_0^1}

\newcommand{\Ombar}{[0, 1]}
\newcommand{\loc}{\mathrm{loc}}

\newcommand{\leb}[2][(0, 1)]{\ensuremath{L^{#2}(#1)}}

\newcommand{\sob}[3][(0, 1)]{\ensuremath{W^{#2, #3}(#1)}}

\newcommand{\con}[2][\Ombar]{\ensuremath{C^{#2}(#1)}}

\newcommand{\nn}{\nonumber}

\newcommand{\norm}[2][ ]{\|#2\|_{#1}}

\makeatletter
\renewenvironment{proof}[1][\proofname]{\par
  \pushQED{\qed}%
  \normalfont \topsep0\p@\relax
  \trivlist
  \item[\hskip\labelsep\scshape
  #1\@addpunct{.}]\ignorespaces
}{%
  \popQED\endtrivlist\@endpefalse
}
\makeatother

\newtheorem{base}{Base}[section]
\numberwithin{equation}{section}

\newtheorem{theorem}[base]{Theorem} \newtheorem*{theorem*}{Theorem}
\newtheorem{lemma}[base]{Lemma} \newtheorem*{lemma*}{Lemma}
\newtheorem{prop}[base]{Proposition} \newtheorem*{prop*}{Proposition}
 \newtheorem*{cor*}{Corollary}

\theoremstyle{definition}
\newtheorem{remark}[base]{Remark} \newtheorem*{remark*}{Remark}
 \newtheorem*{definition*}{Definition}
 \newtheorem*{example*}{Example}
 \newtheorem*{cond*}{Condition}

\setkomafont{title}{\normalfont\Large}
\title{Absence of critical mass phenomena in one-dimensional critical quasilinear Keller--Segel systems}
\usepackage{authblk}

\author[1]{Xinru Cao\footnote{e-mail: caoxinru@gmail.com}}
\author[2,3]{Mario Fuest\footnote{e-mail: fuest@ifam.uni-hannover.de, corresponding author}}

\affil[1]{School of Mathematics and Statistics, Donghua University, North Renmin Road 2999, 201620 Shanghai, China}
\affil[2]{Leibniz Universität Hannover, Institut für Angewandte Mathematik, Welfengarten 1, 30167 Hannover, Germany}
\affil[3]{Institut für Mathematik, Universität Kassel, Heinrich-Plett Str.\ 40, 34132 Kassel, Germany}

\begin{document}
\date{}
\maketitle

\KOMAoptions{abstract=true}
\begin{abstract}
\noindent
We consider the Neumann initial boundary value problem associated to the chemotaxis system
\begin{align}\label{prob:abstract}\tag{$\star$}
  \begin{cases}
    u_t = \big((u+1)^{m-1} u_x - u(u+1)^m v_x\big)_x & \text{in $(0, 1) \times (0, \infty)$}, \\
    v_t = v_{xx} - v + u,  &\text{in $(0, 1) \times (0, \infty)$},
  \end{cases}
\end{align}
where $m \in \mathbb R$ is a given parameter.
The relation between diffusion and taxis sensitivity is critical since the ratio $u(u+1)^m/(u+1)^{m-1}$ grows like $u^{2/n}$ for large $u$ with $n = \dim((0, 1)) = 1$.

Nonetheless, we show that there is no critical mass phenomenon if $m \le -1$; that is, in that case all solutions emanating from suitably regular initial data are globally bounded.
For certain parabolic--elliptic simplifications of \eqref{prob:abstract}, we obtain the same conclusion for all $m \in (-\infty, -1] \cup (0, \infty)$
and even for all $m \in \mathbb R$ if the initial datum is additionally assumed to be monotone.

This stands in contrast to critical mass phenomena known to occur for critical quasilinear Keller--Segel systems considered in higher-dimensional domains.
Accordingly, we make use of several special features of the one-dimensional setting
such as the boundedness of the energy functional from below, the embedding $W^{1, n} \hookrightarrow L^\infty$,
and the fact that the mass accumulation function solves a spatially non-degenerate parabolic equation. \\
\textbf{Key words:} {chemotaxis; critical mass; boundedness; quasilinear parabolic problems} \\
\textbf{AMS Classification (2020):} {35B33 (primary); 35B45, 35B65, 35K59, 92C17 (secondary)}
\end{abstract}

\section{Introduction}
The quasilinear Keller--Segel model 
\begin{align}\label{quasilinear}
  \begin{cases}
    u_t = \nabla\cdot\big(D(u)\nabla u - S(u)\nabla v\big) & \text{in } \Omega\times(0,T),\\
    \tau v_t = \Delta v-v+u  & \text{in } \Omega\times(0,T),
  \end{cases}
\end{align}
describes the migration of cells in response to their surrounding environment in a smooth, bounded domain $\Omega\subset \mathbb{R}^n$ ($n\ge 1$).
It has been proposed by Hillen and Painter  in \cite{PainterHillenVolumefillingQuorumsensingModels2002}
(for certain choices of $D$ and $S$; see also the pioneering work \cite{KellerSegelInitiationSlimeMold1970} for $D \equiv 1$ and $S = \id$
as well as \cite{HillenPainterUserGuidePDE2009} for an overview of related models).
Here, $u$ and $v$ stand for the density of cells and the concentration of a chemical signal produced by the cells, respectively.
Moreover, $\tau \in \{0, 1\}$ is a given parameter, and the diffusion rate $D \ge 0$ and the chemotactic sensitivity $S \ge 0$ represent
the strength of random and directed motion, respectively, with respect to the local cell density.

In particular depending on the behaviour of the quotient $\tfrac{S(\sigma)}{D(\sigma)}$ as $\sigma\nearrow \infty$,
different solution dynamics can be observed.
For the sake of exposition, we focus on the prototypical choices
\begin{align}\label{DS_gen}
  D(\sigma)=(\sigma+1)^{m-1}  \quad \text{ and } \quad S(\sigma)=\sigma(\sigma+1)^{q-1}
  \qquad \text{for } \sigma \ge 0
\end{align}
for $m,q\in\mathbb{R}$.

If $q-(m-1)<\frac{2}{n}$ and $n\ge 1$, solutions of \eqref{quasilinear} are always global and bounded (\cite{TaoWinklerBoundednessQuasilinearParabolic2012}, \cite{IshidaEtAlBoundednessQuasilinearKeller2014}, \cite{DingWinklerRadialBlowupQuasilinear2024}, see also \cite{HorstmannWinklerBoundednessVsBlowup2005}, \cite{SenbaSuzukiQuasilinearParabolicSystem2006}).
On the other hand, if $q-(m-1)>\frac{2}{n}$ with $n\ge 2$, unbounded solutions have been constructed in \cite{WinklerDoesVolumefillingEffect2009} (see also \cite{HorstmannWinklerBoundednessVsBlowup2005} and \cite{DingWinklerRadialBlowupQuasilinear2024}) without identifying whether blow-up happens at finite or infinite time.
If additionally $q\le 0$, i.e., if $S$ is bounded, solutions are always global which excludes the possibility of finite-time blow-up
(\cite{WinklerGlobalClassicalSolvability2019}, \cite{LankeitInfiniteTimeBlowup2020}).
If $q>0$ (and still $q-(m-1)>\frac2n$ as well as $n \ge 2$), however,
finite-time blow-up is known to occur in the parabolic--elliptic setting $\tau = 0$ (\cite{WinklerDjieBoundednessFinitetimeCollapse2010})
and under certain additional assumptions also in the fully parabolic setting $\tau = 1$
(\cite{CaoFuestFinitetimeBlowupFully2025}, see also \cite{CieslakStinnerFinitetimeBlowupGlobalintime2012}, \cite{CieslakStinnerFiniteTimeBlowupSupercritical2014}, \cite{CieslakStinnerNewCriticalExponents2015}).
Regarding finite time blow-up for $q-(m-1) > \frac2n$ and $n = 1$, we refer to \cite{cieslak2010looking} and \cite{CieslakLaurencotFiniteTimeBlowup2010}
(and also to \cite{CieslakWinklerFinitetimeBlowupQuasilinear2008}).

A natural next step towards investigating the precise strength of the destabilising chemotaxis term consists of considering the case of critical exponents;
that is, to assume
\begin{align}\label{critical_exponent}
  q-(m-1)=\frac 2n.
\end{align}
For $n\ge 2$, this leads to \emph{critical mass phenomena}: For initial data with $\int_\Omega u_0$ below a certain threshold, all solutions are global and bounded,
while for all larger $M$ there are initial data with $\int_\Omega u_0 = M$ leading to blow-up.
Most famously, this has been observed for the minimal Keller--Segel system (obtained upon the choices $m=q=1$) in two-dimensional settings,
where the critical mass is $4\pi$ for arbitrary smooth domains
(\cite{NagaiEtAlApplicationTrudingerMoser1997}, \cite{GajewskiZachariasGlobalBehaviourReactiondiffusion1998}, \cite{NagaiBlowupNonradialSolutions2001}, \cite{HorstmannWangBlowupChemotaxisModel2001}),
$8\pi$ in the radially symmetric setting
(\cite{NagaiEtAlApplicationTrudingerMoser1997}, \cite{GajewskiZachariasGlobalBehaviourReactiondiffusion1998}, \cite{MizoguchiWinklerBlowupTwodimensionalParabolic},
see also \cite{MizoguchiFinitetimeBlowupCauchy2020}, \cite{FuestEtAlBlowupFullyParabolic2026})
as well as for the full space (\cite{BlanchetEtAlTwodimensionalKellerSegelModel2006})
and $2\pi$ for the quarter disc (\cite{FuestLankeitCornersCollapseSimple2023}).
For generalisations to quasilinear systems with more general $m$ and $q$ lying on the critical line \eqref{critical_exponent},
see \cite{LaurencotMizoguchiFiniteTimeBlowup2017}, \cite{winkler2022family}, \cite{CaoGaoCriticalMassQuasilinear2023} and also \cite{DingWinklerRadialBlowupQuasilinear2024}.

To the best of our knowledge, the only example of parameters fulfilling \eqref{critical_exponent} for which there is \emph{no} critical mass phenomenon
is $n = 1$ and $(m, q) = (0, 1)$.
Indeed, in that case solutions are known to be always globally bounded,
also for initial data with large mass (\cite{cieslak2018no}, \cite{CieslakFujieGlobalExistence1D2020}, \cite{BieganowskiEtAlBoundednessSolutionsCritical2019}).
The corresponding proofs are based on an energy functional involving spatial derivatives of $u$,
and attempts to extend these to other critical $(m, q)$ have so far not resolved the question of global boundedness.
(See \cite{CieslakEtAlNonlinearFisherInformation2025} for recent developments in this direction).

These results motivate the question:
\begin{center}
  \emph{Is the absence of a critical mass distinguishing between boundedness and blow-up \\
  limited to the particular choice $n = 1$, $(m, q) = (0, 1)$ or \\
  a general phenomenon of critical quasilinear Keller--Segel systems in 1D?}
\end{center}

To address it, we study the problem
\begin{align}\label{system}
  \begin{cases}
    u_t = \big(D(u) u_x - S(u)v_x\big)_x            & \text{in $(0, 1) \times (0, T)$}, \\
    \tau v_t = v_{xx} - k v - (1-k) M + u           & \text{in $(0, 1) \times (0, T)$}, \\
    u_x = v_x = 0                                   & \text{on $\{0, 1\} \times (0, T)$}, \\
    u(\cdot, 0) = u_0, \tau v(\cdot, 0) = \tau v_0  & \text{in $(0, 1)$},
  \end{cases}
\end{align}
where
$M = \int_0^1 u_0$, $\tau, k \in \{0, 1\}$ as well as
\begin{align}\label{DS}
  D(\sigma) = (\sigma+1)^{m-1}, \quad S(\sigma) = \sigma (\sigma+1)^m \qquad \text{for all } \sigma\ge 0
\end{align}
and some $m \in \R$.
That is, we consider the fully parabolic ($(\tau,k)=(1,1)$), the parabolic--elliptic ($(\tau,k)=(0,1)$) and the Jäger--Luckhaus ($(\tau,k)=(0,0)$) settings.
(The choice $(\tau, \kappa) = (1, 0)$ could be treated in a similar manner as $(\tau, \kappa) = (1, 1)$,
but as it appears to be rather unnatural and some small modifications would be necessary, we choose to not cover it here.)
We note that \eqref{DS} corresponds to \eqref{DS_gen} with $q = m+1$, which according to \eqref{critical_exponent} is the critical case for $n = 1$.

Our goal is to show that solutions of \eqref{system} emanating from arbitrary initial data
\begin{align}\label{initial_data}
  u_0, \tau v_0 \in \bigcup_{\theta > 1} W^{1, \theta}((0, 1)) \quad \text{with} \quad 0 \not \equiv u_0 \ge 0,\, \tau v_0 \ge 0
\end{align}
remain bounded throughout evolution, i.e., that there is no critical mass with respect to global boundedness.

\paragraph{A criterion for global boundedness.}
We first establish a refined criterion for global existence and boundedness of solutions to \eqref{system}.
\begin{prop}\label{prop:eps_reg}
  Let $(\tau, k) \in \{(0, 0), (0, 1), (1, 1)\}$ and $m \in \R$.
  For all initial data $u_0, v_0$ fulfilling \eqref{initial_data},
  there exist $\tmax \in (0, \infty]$ and a uniquely determined, nonnegative solution
  \begin{align}\label{eq:eps_reg:reg}
    (u, v) \in \bigcup_{\theta > 1} C^0\big([0, \tmax); W^{1, \theta}((0, 1))\big) \cap C^{\infty}\big([0, 1] \times (0, \tmax)\big)
    \quad \text{with} \quad
    (1-k) \intom v = 0 
  \end{align}
  of \eqref{system} such that $\limsup_{t \nea \tmax} \|u(\cdot, t)\|_{L^\infty((0, 1))} = \infty$ if $\tmax < \infty$.

  Moreover, there is $\eps > 0$ such that if $u_0, v_0$ satisfy \eqref{initial_data}
  and the corresponding solution $(u, v)$ of \eqref{system} is such that there is $\delta > 0$ with the property that
  \begin{align}\label{eq:eps_reg:eps_cond}
    \int_a^b u(\cdot, t) < \eps \qquad \text{for all $t \in (0, \tmax)$ whenever $0 \le a \le b \le 1$ are such that $|b-a| \le \delta$,}
  \end{align}
  then $(u, v)$ is bounded in $(0, 1) \times (0, \tmax)$ (and hence $\tmax = \infty$).
\end{prop}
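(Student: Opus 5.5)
Local existence, uniqueness, nonnegativity and the extensibility criterion are standard. I would obtain them from Amann's theory for quasilinear parabolic systems, or by a fixed point argument in $C^0([0,T]; W^{1,\theta})$. The system is uniformly parabolic as long as $u$ is bounded, since $D(u)=(u+1)^{m-1}$ is then bounded from above and below; this holds for every $m\in\R$. In the elliptic cases $\tau=0$, $v$ is determined by $u$ through the Neumann problem. For $k=0$ we normalise $\intom v=0$, which is possible because $\intom (u-M)=0$ by mass conservation. Parabolic smoothing gives $C^\infty$ regularity in $[0,1]\times(0,\tmax)$. Nonnegativity follows from the comparison principle, since $S(0)=0$. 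Since $W^{1,\theta}\embed L^\infty$ in 1D, local existence time depends only on $\|u_0\|_{W^{1,\theta}}$ and the lower positive bound of $D$ on bounded sets. The usual bootstrap therefore shows that an $L^\infty$ bound for $u$ up to $\tmax$ yields $W^{1,\theta}$ bounds, hence the blow-up criterion.

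For the $\eps$-regularity part, I would pass to the variable $w(x)=\int_0^x u$, or equivalently work directly with an $L^p$ estimate localised via the smallness condition. My plan is the $L^p$ approach with a Gagliardo--Nirenberg inequality refined by the concentration condition. Set $\Phi(u)=\int_0^u \frac{(s+1)^{m-1}}{?}$, more concretely: test the first equation with $(u+1)^{p-1}$ for large $p$. This gives
\begin{align*}
  \frac1p\ddt\intom (u+1)^p + (p-1)\intom (u+1)^{p+m-3}u_x^2
  = (p-1)\intom u(u+1)^{p+m-2} u_x v_x .
\end{align*}
Integrating the right-hand side by parts turns it into $-\intom G(u) v_{xx}$ with $G(u)\approx (u+1)^{p+m}$ up to constants. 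Now $v_{xx}=kv+(1-k)M-u+\tau v_t$. For $\tau=0$ the dangerous term is $\intom (u+1)^{p+m+1}$, which has to be absorbed by the dissipation $\intom |((u+1)^{(p+m-1)/2})_x|^2$. With $z=(u+1)^{(p+m-1)/2}$ this requires
\[
  \intom z^{2+\frac{2}{p+m-1}} \le \eta \intom z_x^2 + C,
\]
where the $L^{2/(p+m-1)}$-norm of $z$ is essentially the mass of $u+1$. In 1D this is the critical Gagliardo--Nirenberg situation, which is why we need small local mass. I would prove the localised inequality by covering $[0,1]$ with intervals of length $\delta$ and using cut-offs. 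On each interval $I$,
\[
  \|z\|_{L^\infty(I)}^{2/(p+m-1)} \lesssim \int_I (u+1) + \text{(derivative term)},
\]
giving $\int_I z^{2+2/(p+m-1)} \le \|z\|^{2/(p+m-1)}_{L^\infty(I)}\int_I z^2$ together with a Poincaré--Sobolev estimate. This yields the factor $C(\eps+\delta)$ in front of $\intom z_x^2$, plus lower-order terms. Choosing $\eps$ small, and noting that the constant must be independent of $p$ only for one fixed $p>1$, we get an $L^p$ bound, and then $L^\infty$ via Moser iteration or semigroup estimates.

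For the fully parabolic case $\tau=1$, I would instead use the first-order quantity: an $L^p$ bound for $u$ with $p$ slightly above $1$ gives $v_x\in L^\infty$ by parabolic regularity. Then one tests the $u$-equation and handles $\intom G(u)v_{xx}$ by Young's inequality, with a combined functional $\intom (u+1)^p + \intom |v_{xx}|^2$ or $\intom v_x^{2q}$, again absorbing via the localised Gagliardo--Nirenberg inequality. The main obstacle is exactly this localised critical interpolation inequality, with a constant uniform in $t$ and depending only on $\eps,\delta$, and its coupling to $v_{xx}$ when $\tau=1$. I expect the cleanest route there to be choosing $p$ close to $1$ so that the exponents match those of the 1D embedding $W^{1,1}\embed L^\infty$.
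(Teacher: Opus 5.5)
Your local theory and your treatment of the parabolic--elliptic cases are fine. There, $v_{xx}=kv+(1-k)M-u$ together with $G\ge0$ and $v\ge0$ leaves only $\int_0^1G(u)u\lesssim\int_0^1(u+1)^{p+m+1}$, and the interval-wise Gagliardo--Nirenberg inequality absorbs this term. This sign argument even avoids the elliptic regularity used in the paper. Two small corrections: the power to be controlled is $z^{2+4/(p+m-1)}=(u+1)^{p+m+1}$, not $z^{2+2/(p+m-1)}$. Also, the scaled inequality on each interval, as in Lemma~\ref{lm:eps_inq}, directly gives the factor $(\int_I(u+1))^2$ in front of $\int_Iz_x^2$.

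The genuine gap is the fully parabolic case $\tau=1$, which is where the difficulty of the proposition lies.
\begin{itemize}
\item Your step ``an $L^p$ bound with $p$ slightly above $1$ gives $v_x\in L^\infty$'' is circular. The hypothesis \eqref{eq:eps_reg:eps_cond} only bounds the mass on short intervals, and by itself it does not even give uniform integrability (cf.\ Proposition~\ref{lm:ex_w_hoelder_u_not_equiint}). So such a bound must be derived, and that already requires controlling $\int_0^1G(u)v_{xx}$ with $v_{xx}=v_t+v-u$.
\item Mass conservation alone only yields $v_x\in L^\infty((0,\tmax);L^r((0,1)))$ for $r<\infty$. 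Then $\int_0^1(u+1)^{p+m+1}v_x^2\lesssim\|u+1\|_{L^{(p+m+1)\theta'}((0,1))}^{p+m+1}$ with some $\theta'>1$. By Gagliardo--Nirenberg this costs a power strictly larger than $2$ of $\|((u+1)^{(p+m-1)/2})_x\|_{L^2((0,1))}$, and no smallness of local mass can absorb that.
\item For $m<0$, taking $p$ close to $1$ violates $p+m>1$. You need that condition for the dissipation to have the critical form $\int_0^1|((u+1)^{(p+m-1)/2})_x|^2$.
\item The combined functionals $\int_0^1(u+1)^p+\int_0^1v_{xx}^2$ or $\int_0^1(u+1)^p+\int_0^1|v_x|^{2q}$ are only named. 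You give no indication of how their cross terms are absorbed at critical scaling.
\end{itemize}

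The missing idea is the paper's Lemma~\ref{lm:max_reg}. It is a maximal Sobolev estimate $\int_0^t\int_0^1\ure^{\gamma rs}|v_{xx}|^r\le C\int_0^t\int_0^1\ure^{\gamma rs}u^r+C\|\tau v_0\|_{W^{2,r}((0,1))}^r$ with $r=p+m+1$, whose constant $C$ does not depend on $\gamma\in[0,\frac1{2r}]$. The argument then runs as follows.
\begin{itemize}
\item Bound the cross term crudely by $\frac{p(p-1)}{p+m}\big(\int_0^1(u+1)^{p+m+1}+\int_0^1|v_{xx}|^{p+m+1}\big)$, and write a variation-of-constants formula for $y=\int_0^1(u+1)^p$.
\item The dissipation only provides the sublinear damping $c_8y^\lambda-c_7$. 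Linearise it as $c_8A^{\lambda-1}y$ with $A=\sup_{(0,T)}y$, and use this rate as the exponential weight.
\item The weighted $v_{xx}$-integral is then exchanged for $c_3$ times the weighted integral of $(u+1)^{p+m+1}$. The localised inequality with $\eta=c_1/(c_2+c_2c_3)$ absorbs this into the weighted dissipation, which is why $\eps$ depends on the maximal-regularity constant.
\item All remaining terms are $O(A^{1-\lambda})$, so Young's inequality gives $A\le\frac12A+c_{11}$ uniformly in $T$.
\end{itemize}
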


\begin{remark}
  \begin{enumerate}
    \item
      Results of this flavour are known as $\varepsilon$-regularity theorems.
      Often these are stated in a more local fashion; that is, instead of asserting that an analogous version of \eqref{eq:eps_reg:eps_cond}
      implies global boundedness whenever $\eps > 0$ is small enough,
      it is claimed that an analogue of $\sup_{t \in (0, \tmax)} \int_{x_0-\delta/2}^{x_0+\delta/2} u(\cdot, t) < \eps$
      implies boundedness of $u$ near $x_0$ for small $\eps > 0$.
      (Here, “analogue” inter alia means that $L^1$ is replaced by a space critical for the problem at hand.)
      We refer to \cite{sugiyama2010varepsilon} (and also \cite{sugiyama2009varepsilon}) for the quasilinear Keller--Segel model posed in the higher dimensional full space
      and to the seminal work \cite{caffarelli1982partial} for the Navier--Stokes equations, for instance.

    \item
      Some conditions implying \eqref{eq:eps_reg:eps_cond} are listed in Proposition~\ref{prop:eps_reg_alt_cond} below.

    \item
      For the minimal Keller--Segel system considered on $n$-dimensional domains,
      uniform integrability of $\{\, u^\frac{n}{2}(\cdot, t) \mid t \in (0, \tmax)\}$ is known to yield global boundedness (\cite[Chapter~2]{CaoGlobalSolutionsChemotaxis2017}).
      In an analogous fashion, we could also obtain global boundedness by requiring, instead of \eqref{eq:eps_reg:eps_cond},
      that $\{\, u(\cdot, t) \mid t \in (0, \tmax)\,\}$ is uniformly integrable on $(0, 1)$  --
      but that would be unsuitable for our proof of Theorem~\ref{th:gb_pe} below;
      see Proposition~\ref{lm:ex_w_hoelder_u_not_equiint} below and the discussion directly beforehand for more details.
  \end{enumerate}
\end{remark}

Evidently, Proposition~\ref{prop:eps_reg} implies (upon taking $\delta = 1$ in \eqref{eq:eps_reg:eps_cond} and making use of mass conservation, i.e., of \eqref{eq:u_mass})
that for all initial data with sufficiently small mass, the corresponding solution of \eqref{system} is globally bounded.
\begin{prop}\label{prop:gb_small_mass}
  Let $(\tau, k) \in \{(0, 0), (0, 1), (1, 1)\}$ and $m > 0$.
  There exists $\eps > 0$ such that if $u_0$ and $\tau v_0$ fulfil \eqref{initial_data} as well as $\int_0^1 u_0 < \eps$,
  then the solution $(u, v)$ of \eqref{system} obtained in Proposition~\ref{prop:eps_reg} is global and bounded uniformly in time.
\end{prop}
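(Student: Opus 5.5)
The proof is a direct consequence of Proposition~\ref{prop:eps_reg}, combined with conservation of mass. I will take the $\eps > 0$ provided by Proposition~\ref{prop:eps_reg}, which depends only on $(\tau,k)$ and $m$ and not on the initial data, and use exactly this $\eps$ in Proposition~\ref{prop:gb_small_mass}. Now let $u_0, \tau v_0$ satisfy \eqref{initial_data} with $\int_0^1 u_0 < \eps$, and let $(u,v)$ be the solution from Proposition~\ref{prop:eps_reg}, defined on $(0,\tmax)$.

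The first step is mass conservation, $\int_0^1 u(\cdot,t) = \int_0^1 u_0$ for all $t \in (0,\tmax)$. To see it, integrate the first equation of \eqref{system} over $(0,1)$. Since $u$ is smooth in $[0,1]\times(0,\tmax)$ and $u_x = v_x = 0$ on $\{0,1\}$, the flux $D(u)u_x - S(u)v_x$ vanishes at the boundary, so $\ddt \int_0^1 u = 0$ on $(0,\tmax)$. Continuity of $u$ as a map into $W^{1,\theta}((0,1)) \embed C^0([0,1])$, for some $\theta > 1$, then carries the identity down to $t = 0$. This is the identity the paper refers to as \eqref{eq:u_mass}, so it may simply be cited.

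The second step is to verify \eqref{eq:eps_reg:eps_cond} with $\delta = 1$. Let $0 \le a \le b \le 1$ and $t \in (0,\tmax)$. Nonnegativity of $u$ gives
\[
  \int_a^b u(\cdot,t) \le \int_0^1 u(\cdot,t) = \int_0^1 u_0 < \eps .
\]
Hence the hypothesis of the second part of Proposition~\ref{prop:eps_reg} holds, and $(u,v)$ is bounded in $(0,1)\times(0,\tmax)$. The first part of that proposition, the extensibility criterion, then forces $\tmax = \infty$, so the bound holds uniformly in time.

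There is no real obstacle here. The only points that need care are that $\eps$ is uniform in the initial data, which is how Proposition~\ref{prop:eps_reg} is stated, and that mass conservation holds up to $t=0$. The restriction $m>0$ is not needed by this argument; it is simply inherited from the statement.
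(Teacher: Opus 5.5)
Your proposal is correct and follows the paper's argument: the paper derives Proposition~\ref{prop:gb_small_mass} from Proposition~\ref{prop:eps_reg} by taking $\delta = 1$ in \eqref{eq:eps_reg:eps_cond} and using mass conservation \eqref{eq:u_mass}. Your remark that the restriction $m>0$ plays no role is also right, since Proposition~\ref{prop:eps_reg} holds for all $m \in \R$.
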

Our main results stated below do not require smallness of $\intom u_0$, however.

\paragraph{The parabolic--elliptic case with positive $m$.}
As a first nontrivial application of Proposition~\ref{prop:eps_reg}, we consider $\tau = 0$, $k \in \{0, 1\}$ and $m > 0$.
The key observation is that the accumulated mass function $w$ defined by $w(x, t) = \int_0^x u(\rho, t) \drho$ then solves an equation of $p$-Laplace type,
which, unlike its counterparts arising in higher-dimensional settings, is not degenerate in the spatial variable.
Since $v_x$ is bounded (see Lemma~\ref{lm:vx_linfty}) and the relation between $S$ and $D$ is critical,
well-known Hölder regularity results (cf.\ \cite{DiBenedettoDegenerateParabolicEquations1993}) become applicable for $w$ (see Lemma~\ref{lm:w_hoelder}).
As these imply \eqref{eq:eps_reg:eps_cond} by Proposition~\ref{prop:eps_reg_alt_cond}(iii), we obtain the following.
\begin{theorem}\label{th:gb_pe}
  Let $(\tau,k) \in \{ (0,0),(0,1)\}$, $m > 0$ and suppose \eqref{initial_data}.
  Then the solution $(u, v)$ of \eqref{system} obtained in Proposition~\ref{prop:eps_reg} is global and bounded uniformly in time.
\end{theorem}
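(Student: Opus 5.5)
We plan to verify the smallness condition \eqref{eq:eps_reg:eps_cond} of Proposition~\ref{prop:eps_reg} by establishing a uniform-in-time Hölder estimate for the accumulated mass function
\begin{align*}
  w(x, t) \defs \int_0^x u(\rho, t) \drho, \qquad (x, t) \in [0, 1] \times [0, \tmax).
\end{align*}
Indeed, if $|w(b, t) - w(a, t)| \le C |b-a|^\alpha$ for all $a, b \in [0, 1]$ and $t \in (0, \tmax)$, with $C, \alpha > 0$ independent of $t$, then choosing $\delta$ with $C\delta^\alpha < \eps$ yields \eqref{eq:eps_reg:eps_cond}. Proposition~\ref{prop:eps_reg} then gives boundedness and $\tmax = \infty$.

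\textbf{Step 1: the equation for $w$.} Integrating the first equation in \eqref{system} over $(0, x)$ and using $u_x = v_x = 0$ at $x = 0$, we obtain
\begin{align*}
  w_t = D(w_x) w_{xx} - S(w_x) v_x,
\end{align*}
with $w(0, t) = 0$ and $w(1, t) = M$ by mass conservation. Introduce the primitive $\Phi(s) = \int_0^s D$, so that $D(w_x)w_{xx} = (\Phi(w_x))_x$. Since $m > 0$, $\Phi(s) \sim \frac1m (s+1)^m$ grows like $s^m$. The equation is thus a divergence-form equation of porous-medium/$p$-Laplace type in the gradient of $w$, and it is non-degenerate in $x$ because the domain is one-dimensional.

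\textbf{Step 2: control of $v_x$ and of the drift.} By Lemma~\ref{lm:vx_linfty}, $\|v_x\|_{L^\infty}$ is bounded uniformly in time when $\tau = 0$, since $\|u\|_{L^1} = M$ and we are in one dimension. The drift term satisfies $S(w_x) \le (w_x+1)^{m+1}$, whereas the diffusion flux satisfies $\Phi(w_x) \sim w_x^m$. In the equation it therefore appears as the lower-order term $-S(w_x)v_x$ with a bounded coefficient. By criticality, $S(w_x)$ grows like $w_x \cdot D(w_x) \cdot w_x$, i.e.\ like $|\nabla w| \cdot D(\nabla w)|\nabla w|$. These are exactly the structural growth conditions, with a bounded coefficient in front of the gradient term, allowed in DiBenedetto's theory~\cite{DiBenedettoDegenerateParabolicEquations1993} for equations $w_t = (a(x,t,w,w_x))_x + b(x,t,w,w_x)$ with $|b| \lesssim |w_x|^{p}$ (natural growth). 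We would use this to get interior and boundary Hölder continuity of $w$. At the boundary the Dirichlet data $0$ and $M$ are constant. Near $t = 0$, $w(\cdot,0)$ is Lipschitz (even $C^{1-1/\theta}$-regular derivative bounds) by \eqref{initial_data}. Crucially, the constants depend only on $M$, $\|v_x\|_{L^\infty}$, the data and $\|w\|_{L^\infty} \le M$, and not on $t$. This is the content of Lemma~\ref{lm:w_hoelder}, which we would invoke directly. Proposition~\ref{prop:eps_reg_alt_cond}(iii) then converts the Hölder bound into \eqref{eq:eps_reg:eps_cond}.

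\textbf{Main obstacle.} The delicate point is Step~2: justifying that the natural-growth drift term $S(w_x)v_x$ is admissible, which requires the critical relation $q = m+1$, and that the resulting Hölder modulus is uniform on $(0, \tmax)$ rather than merely local in time. For the latter we would apply the local estimates on time cylinders of fixed length $[t-1, t]$, together with the initial-layer estimate for small $t$. Since all structural constants are time-independent, this gives a uniform modulus. If the natural-growth term caused trouble, the first thing to try is to use the bound $|S(w_x)v_x| \le \|v_x\|_\infty(w_x+1)D(w_x)(w_x+1)$ and absorb it via the standard smallness-in-small-cylinders argument, exploiting the boundedness of $w$.
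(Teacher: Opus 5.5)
Your proposal is correct and follows essentially the same route as the paper. The paper also writes the $w$-equation as $w_t = (a(w_x))_x + b$ with $a(\xi)=\frac1m(|\xi|+1)^{m-1}(\xi+1)$ and $|b|\le 2M(|\xi|+1)^{m+1}$ via Lemma~\ref{lm:vx_linfty}(i), checks DiBenedetto's structure conditions with $p=m+1$ to get a time-uniform $C^\alpha$ bound for $t\ge\min\{1,\tmax/2\}$ (Lemma~\ref{lm:w_hoelder}), and concludes via Proposition~\ref{prop:eps_reg_alt_cond}(iii) and Proposition~\ref{prop:eps_reg}; the only difference is that the paper handles the initial layer $[0,\min\{1,\tmax/2\}]$ simply by continuity of $w$ on that compact set, rather than by a Hölder estimate reaching up to $t=0$.
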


\paragraph{The Jäger--Luckhaus case with monotone initial data.}
For nonpositive $m$, a corresponding regularity theory for the $w$-equation does not seem to be available.
After a J\"ager--Luckhaus transformation, we first observe in Lemma~\ref{lm:w_le_x_alpha} that $\ol w(x, t) \defs C x^{\alpha}$ is a supersolution for sufficiently small but positive $\alpha$, allowing us to essentially recover Hölder continuity of $w$ in $0$.
For estimates away from the origin, we recall in Lemma~\ref{lm:u_monotone} that if we not only assume $\tau = 0$ but also $k = 0$,
then spatial monotonicity of $u$ is preserved throughout evolution.
Since the mass is preserved, this yields uniform-in-time $L_{\loc}^\infty((0, 1])$ estimates for $u$ (see the proof of Lemma~\ref{lm:u_lp_jl}).
In combination, these bounds render Proposition~\ref{prop:eps_reg} applicable, implying:
\begin{theorem}\label{th:gb_jl}
  Let $(\tau,k) = (0,0)$ and $m \in \R$.
  For all nonincreasing $u_0 \colon \Ombar \to [0, \infty)$ satisfying \eqref{initial_data}, the solution $(u, v)$ of \eqref{system} obtained in Proposition~\ref{prop:eps_reg} is global and bounded uniformly in time.
\end{theorem}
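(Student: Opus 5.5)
Using Proposition~\ref{prop:eps_reg}, it suffices to verify \eqref{eq:eps_reg:eps_cond}: given $\eps>0$ we need $\delta>0$ with $\int_a^b u(\cdot,t)<\eps$ for all $t\in(0,\tmax)$ and all intervals of length at most $\delta$. I split $[0,1]$ into a neighbourhood $[0,x_0]$ of the origin and its complement $[x_0,1]$, and control the mass on small intervals separately in each part.

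\textbf{Away from the origin.} Since $\tau=k=0$, $v_{xx}=M-u$, so $v_x(x,t)=Mx-w(x,t)$ with $w(x,t)=\int_0^x u(\rho,t)\drho$. I will check that a nonincreasing $u_0$ gives $u_x\le0$ for all times (Lemma~\ref{lm:u_monotone}). Formally, $z=u_x$ satisfies a parabolic equation obtained by differentiating the $u$-equation. The zeroth-order terms come from $(S(u)v_x)_{xx}$. Using $v_{xx}=M-u$, they involve $S'(u)(M-u)_x=-S'(u)u_x$ and $S(u)v_{xxx}=-S(u)u_x$, so they are all proportional to $z$. Neumann data give $z=0$ on the boundary, so the maximum principle yields $z\le0$.

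Given monotonicity and mass conservation, $x\,u(x,t)\le\int_0^x u(\cdot,t)\le M$, hence $u(x,t)\le M/x$. Therefore $\int_a^b u\le M|b-a|/x_0$ for $a\ge x_0$, which is less than $\eps/2$ once $\delta<\eps x_0/(2M)$.

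\textbf{Near the origin.} It remains to pick $x_0$ so that $w(x_0,t)<\eps/2$ uniformly in $t$. Then any interval inside $[0,x_0+\delta]$ carries mass less than $\eps$, and all other intervals lie in $[x_0,1]$. For this I use the Jäger--Luckhaus transformation. The function $w$ solves
\[
w_t=(D(w_x)w_{xx})\;-\;S(w_x)(Mx-w)\quad\text{(up to sign conventions)},\qquad w(0,t)=0,\ w(1,t)=M,
\]
which is a quasilinear equation with $D(w_x)>0$. I then compare with $\ol w(x)=Cx^\alpha$ (Lemma~\ref{lm:w_le_x_alpha}). At $x=0$, $\ol w=0=w$; at $x=1$, $\ol w=C\ge M$. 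Initially, $w(\cdot,0)\le\ol w$ when $C$ is large: since $u_0\in W^{1,\theta}\subset L^\infty$, we have $w(x,0)\le\|u_0\|_\infty x\le Cx^\alpha$. The supersolution inequality requires
\[
(\ol w_x+1)^{m-1}\ol w_{xx}-\ol w_x(\ol w_x+1)^m(Mx-\ol w)\le 0.
\]
Here $\ol w_x=C\alpha x^{\alpha-1}\to\infty$ as $x\to0$ and $\ol w_{xx}=-C\alpha(1-\alpha)x^{\alpha-2}$. Where $\ol w\ge Mx$, the taxis term has a favourable sign, because the aggregation term is then $\le0$. This holds on all of $(0,1]$ if $C\ge M$ and $\alpha\le1$. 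The diffusion term is negative because $\ol w$ is concave. So the inequality seems to hold directly. The delicate part is to justify the comparison principle for this degenerate/singular quasilinear equation near $x=0$, where $\ol w_x$ blows up, for arbitrary $m\in\R$. I would apply the comparison on $[\eta,1]$ and let $\eta\to0$, using the smoothness of $w$ for $t>0$.

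With the comparison in hand, $w(x,t)\le Cx^\alpha$, and choosing $x_0=(\eps/(2C))^{1/\alpha}$ gives the required bound. The main obstacle is therefore this comparison step; the monotonicity preservation is a routine maximum principle argument.
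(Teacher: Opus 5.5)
Your monotonicity argument, the bound $u\le M/x$ away from the origin, and the final assembly of \eqref{eq:eps_reg:eps_cond} are fine. The step that carries the whole proof, $w\le Cx^\alpha$, rests on a sign error, however.

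If $\overline w\ge Mx$, then $Mx-\overline w\le 0$, so the taxis term $-\overline w_x(\overline w_x+1)^m(Mx-\overline w)$ in your inequality is $\ge 0$, not $\le 0$. It is the aggregating contribution: the signal gradient transports mass towards $x=0$ and pushes $w$ up. It therefore works against the supersolution property, and concavity of $\overline w$ alone does not compensate for it. Dividing by $(\overline w_x+1)^{m-1}C\alpha x^{\alpha-2}$, your requirement for $\overline w=Cx^\alpha$ becomes
\[
  1-\alpha\ge\bigl(C\alpha x^{\alpha}+x\bigr)\bigl(Cx^\alpha-Mx\bigr)\qquad\text{for } x\in(0,1).
\]
This fails near $x=1$ whenever $(C\alpha+1)(C-M)>1-\alpha$. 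Even for $C=M$ it fails at $x=\frac12$ for small $\alpha$ once $M>4$, since the right-hand side is then close to $\frac M4$. So $Cx^\alpha$ is in general not a supersolution on $(0,1)$, precisely in the large-mass regime the theorem is about. Hence a comparison on the whole interval with lateral datum at $x=1$ cannot be carried out. The delicate point is the construction of the supersolution, not the comparison principle. The latter is routine on $[\eta,x_0]\times[0,T]$: for $\eta=\eta(T)$ small, the inner lateral condition follows from $w\le\|u\|_{L^\infty((0,1)\times(0,T))}\,x$.

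The missing idea is to localise, so that the concavity term, of size $x^{\alpha-2}$, only has to beat aggregation near the origin and for small $\alpha$. As in Lemma~\ref{lm:w_le_x_alpha}, work on $(0,x_0)$ with $x_0\defs\frac1{8\max\{M,1\}}$. Use only $|v_x|\le 2M$ from Lemma~\ref{lm:vx_linfty}(i), so that $w$ is a subsolution of $\mathcal P\varphi\defs\varphi_t-(\varphi_x+1)^{m-1}\varphi_{xx}-2M\varphi_x(\varphi_x+1)^m$, which has no zeroth-order dependence on $\varphi$. For $\alpha<\frac12$ one then has
\[
  \mathcal P(Cx^\alpha)\ge(\overline w_x+1)^{m-1}C\alpha x^{\alpha-2}\bigl(\tfrac12-2MC\alpha x^\alpha-2Mx\bigr).
\]
The lateral condition now sits at $x_0$ and is ensured by $C\ge Mx_0^{-\alpha}$, because $w\le M$. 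The initial condition is ensured by $C\ge c_1\defs\sup_{(0,x_0)}x^{-1/2}w_0$. With $C\defs\max\{Mx_0^{-\alpha},c_1\}$ one has $Cx^\alpha\le\max\{M,c_1\}$ on $(0,x_0)$. Hence taking $\alpha\le(8\max\{M^2,Mc_1,1\})^{-1}$ makes the bracket nonnegative, with no circularity between $C$ and $\alpha$. On $[x_0,1]$ one trivially has $w\le M\le Cx^\alpha$.

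With this bound the rest of your argument goes through. It differs from the paper only in the last step. The paper uses monotonicity as $u(x,t)\le w(x,t)/x\le Cx^{\alpha-1}$ to get a uniform $L^p$ bound for some $p>1$, then invokes Proposition~\ref{prop:eps_reg_alt_cond}(ii). You instead verify \eqref{eq:eps_reg:eps_cond} directly via $u\le M/x$ away from the origin. Either route works.
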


\begin{remark}
  By symmetry, Theorem~\ref{th:gb_jl} also applies to nondecreasing $u_0$ as well as to $u_0$ which are symmetric around the axis $x=\frac12$ and monotone in $(0, \frac12)$, for instance.
\end{remark}

\paragraph{The fully parabolic case with bounded $S$.}
Finally, we turn to the fully parabolic setting, i.e., to the case $(\tau,k)=(1, 1)$.
Solutions to \eqref{system} are always global if $S$ is bounded and $D$ decays at most algebraically,
as can be rapidly seen by extending the approach in \cite{WinklerGlobalClassicalSolvability2019},
based on testing procedures, maximal Sobolev regularity and Moser iteration, to the one-dimensional setting.
(See also \cite[Theorem~5.1]{CieslakEtAlNonlinearFisherInformation2025} for an alternative proof when $D$ and $S$ fulfil \eqref{DS} for $m = -1$).
While this result covers both supercritical and critical cases,
it leaves the question open whether, for bounded $S$ and corresponding critical $D$ as in \eqref{DS},
a critical mass distinguishing between global boundedness and the occurrence of infinite-time blow-up exists.

Our final main result provides a negative answer.
Its proof rests on special properties of the one-dimensional setting in two places:
First, we note that the Lyapunov functional defined in \eqref{eq:energy:def_F} remains bounded from below along trajectories
and that hence its dissipation rate is integrable in time (see Lemma~\ref{lm:vt_l2_spacetime}).
Second, the embedding $\sob11 \embed \leb\infty$ plays a key role in deriving suitable estimates for another functional in Lemma~\ref{lm:ddt_psi_u}.
Combined with the assumption that $S$ is bounded, these preparations yield a uniform-in-time $L\log L$ bound for $u$ in Lemma~\ref{lm:ulnu} and hence the following.
\begin{theorem}\label{th:gb_pp}
  Let $(\tau, k) \in \{(0, 0), (0, 1), (1, 1)\}$, let $m \le -1$ and suppose \eqref{initial_data}.
  Then the solution $(u, v)$ of \eqref{system} obtained in Proposition~\ref{prop:eps_reg} is global and bounded uniformly in time.
\end{theorem}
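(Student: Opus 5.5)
The plan is to verify the $\eps$-regularity condition \eqref{eq:eps_reg:eps_cond} of Proposition~\ref{prop:eps_reg} through a uniform-in-time $L\log L$ bound. Such a bound gives uniform integrability of $\{u(\cdot,t)\}$: for $|b-a|\le\delta$ and any $K>1$,
\begin{align*}
  \int_a^b u \le K\delta + \frac{1}{\ln K}\sup_{t}\int_0^1 u\ln u + C,
\end{align*}
so choosing first $K$ large and then $\delta$ small yields $\int_a^b u<\eps$. Since $m\le -1$, we have $S(u)=u(u+1)^m\le 1$, i.e.\ the taxis sensitivity is bounded, and $D(u)=(u+1)^{m-1}$ decays algebraically. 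This boundedness of $S$ is what I will exploit. I will first treat $(\tau,k)=(1,1)$, the hardest case. For $\tau=0$ the $v$-equation is elliptic, so $v_x$ is bounded in $L^\infty$ by $\int u=M$; this is Lemma~\ref{lm:vx_linfty}, and the argument below then simplifies.

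\textbf{Step 1 (energy).} I will use the Lyapunov functional
\begin{align*}
  \mc F(u,v) = \int_0^1 G(u) - \int_0^1 uv + \frac12\int_0^1 (v_x^2+v^2),
  \qquad G''=D/S=\frac{1}{u(u+1)} .
\end{align*}
Here $G(u)=u\ln\frac{u}{u+1}-\ln(u+1)$ up to linear terms, which grows only logarithmically. Its dissipation is $\int v_t^2+\int S(u)\,\bigl((D/S)u_x-v_x\bigr)^2$. To bound $\mc F$ from below in 1D, I use $\int uv\le M\|v\|_\infty\le CM\|v\|_{W^{1,2}}\le \frac14\|v\|_{W^{1,2}}^2+CM^2$, together with $G\ge -C\ln(u+1)-C\ge -C u^{1/2}-C$, which is controlled by mass. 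Hence $\mc F$ is bounded below and above along trajectories, and we obtain $\int_0^\infty\!\!\int v_t^2<\infty$ together with a uniform $W^{1,2}$ bound for $v$.

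\textbf{Step 2 (the $L\log L$ functional).} I then test the $u$-equation with $\psi'(u)$, where $\psi(u)=u\ln u$ or a variant $\psi$ adapted to $D$. This gives
\begin{align*}
  \ddt\int \psi(u)+\int \psi''Du_x^2=\int\psi''S u_xv_x .
\end{align*}
The term $\int\psi'' S u_x v_x$ is the obstacle. I rewrite it as $-\int \Phi(u)v_{xx}$ with $\Phi'=\psi''S$. For $\psi=u\ln u$ this gives $\Phi'\le (u+1)^{m-1}$, so $\Phi$ is bounded because $m\le -1$ (with $m=-1$ giving a logarithm, handled similarly). Then I use $v_{xx}=v_t+v-u$:
\begin{align*}
  -\int\Phi(u)v_{xx} = -\int\Phi(u)(v_t+v) + \int u\,\Phi(u).
\end{align*}
Since $\Phi$ is bounded, the first piece is at most $C\|v_t\|_{L^2}+C$. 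The term $\int u\Phi(u)\le C\int u = CM$ is harmless. This yields $\ddt\int u\ln u\le C\|v_t\|_{L^2}+C$, which is not integrable in time by itself. To close, I add a damping term $c\int u\ln u$ on the left-hand side, obtained from the dissipation $\int D u_x^2/u$. This is where the embedding $\sob11\embed\leb\infty$ enters: $\|u\|_\infty\le M+\int|u_x|$, and Cauchy--Schwarz with the weight $D/u$ controls $\int|u_x|$ by the dissipation times a power of $\|u\|_\infty$. Combining this with the time-integrability of $\|v_t\|^2_{L^2}$ from Step 1 in a Gronwall/ODE comparison then gives $\sup_t\int u\ln u<\infty$.

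I expect the main obstacle to be exactly this closure, since $D$ degenerates like $u^{m-1}$. If the plain entropy $u\ln u$ does not produce enough damping, I would replace it by $\psi$ with $\psi''=1/(uD(u))$-type weights truncated to remain superlinear, still keeping $\Phi$ bounded. The cases $\tau=0$ follow the same route, with $v_t$ replaced by $0$ and $v_{xx}=kv+(1-k)M-u$. The $L\log L$ bound then feeds into Proposition~\ref{prop:eps_reg} as described at the start, which concludes the proof.
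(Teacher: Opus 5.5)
\textbf{Verdict: there is a genuine gap in Step~2, exactly at the closure you flag.} Step~1 is fine and matches the paper (Lemmas~\ref{lm:energy} and \ref{lm:vt_l2_spacetime}).

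\textbf{Why the proposed damping fails.} With $\psi(u)=u\ln u$, the dissipation is $\int_0^1 u^{-1}(u+1)^{m-1}u_x^2$. Your Cauchy--Schwarz argument with weight $D/u$ only gives
\[
  \int_0^1 u^{-1}(u+1)^{m-1}u_x^2 \ge \frac{(\|u\|_{\leb{\infty}}-M)^2}{M\,\|u+1\|_{\leb{\infty}}^{1-m}} .
\]
Because $1-m\ge 2$, this stays bounded (and tends to $0$ if $m<-1$) as $\|u\|_{\leb{\infty}}\to\infty$, so no damping term arises. A cleverer estimate cannot fix this. For $m<-1$, take a (smoothed) profile equal to $H$ on $(0,\mu/H)$, followed by the tail $(x-\mu/H+H^{(m-1)/2})^{2/(m-1)}$. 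Its mass and $\int_0^1 u^{m-2}u_x^2$ are bounded independently of $H$, while $\int_0^1 u\ln u\ge \mu\ln H - \tfrac1e\to\infty$. So $\ddt\int_0^1 u\ln u\le C\|v_t\|_{L^2((0,1))}+C$ cannot be closed and only yields growth linear in $t$.

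\textbf{Why the fallback fails.} Note first that $\Phi'=\psi''S=(u+1)^m$, not $\le(u+1)^{m-1}$. The suggested $\psi''=1/(uD(u))$ gives $\Phi'=u+1$, so $\Phi$ is quadratic, not bounded. For $m=-1$, boundedness of $\Phi=\int\psi''S$ forces $\psi'$ to be bounded, i.e.\ $\psi$ at most linear. So ``handled similarly'' is unsupported there: with $\psi=u\ln u$ you get $\Phi=\ln(u+1)$, and $\int_0^1 u\Phi(u)$ is the $L\log L$ quantity itself, sitting on the right-hand side.

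\textbf{The missing idea.} Choose $\psi''=(u+1)^{-m-2}$, so that $\psi''D=(u+1)^{-3}$ regardless of $m$, while $\psi(s)\ge (s+1)\ln(s+1)-s$ for $m\le -1$. Then apply $\sob{1}{1}\embed\leb{\infty}$ not to $u$ but to $\ln(u+1)$:
\[
  \|\ln(u+1)\|_{\leb{\infty}}\le \int_0^1\frac{|u_x|}{u+1}+\int_0^1\ln(u+1)\le \int_0^1 (u+1)^{-3}u_x^2+2(M+1).
\]
Here the leftover factor $\int_0^1(u+1)$ from Young's inequality is paid for by mass conservation. Since $\int_0^1\psi(u)\le\psi(\|u\|_{\leb{\infty}})$, this gives a damping $f(\int_0^1\psi(u))$ with $f\to\infty$.

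\textbf{How the paper handles the cross term.} For this $\psi$, the antiderivative $\Phi\sim\ln(u+1)$ is unbounded, so the paper does not integrate by parts. It uses Young's inequality together with $\psi''S^2/D=u^2/(u+1)\le u$ and the pointwise bound $|v_x|^2\le C\int_0^1(\tau v_t)^2+C$ from Lemma~\ref{lm:vx_linfty}(ii). This yields $\ddt\int_0^1\psi(u)\le -f(\int_0^1\psi(u))+C+C\int_0^1(\tau v_t)^2$, which closes via Lemma~\ref{lm:ode} and the time-integrability from Step~1.

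\textbf{How your route could be salvaged.} You could keep your integration by parts with this $\psi$, but only by using the sharper bound $\|\ln(u+1)\|_{\leb{\infty}}\le \ln(M+1)+\sqrt{(M+1)\int_0^1(u+1)^{-3}u_x^2}$. That bound lets you absorb both $\int_0^1 u\Phi(u)\le M\|\ln(u+1)\|_{\leb{\infty}}$ and the $\Phi v_t$ term.
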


\paragraph{Plan of the paper.}
We recall local existence theory and prove Proposition~\ref{prop:eps_reg} in Section~\ref{sec:eps_reg},
introduce the mass accumulation function $w$, discuss pointwise estimates of $v_x$ and derive Theorem~\ref{th:gb_pe} in Section~\ref{sec:gb_pe},
before showing Theorem~\ref{th:gb_jl} in Section~\ref{sec:gb_jl},
and then obtain a lower bound for the energy $\mc F(u, v)$ and prove Theorem~\ref{th:gb_pp} in Section~\ref{sec:gb_pp}.

\section{Local existence and refined criteria for global boundedness: proof of Proposition~\ref{prop:eps_reg}}\label{sec:eps_reg}
We begin by noting that \eqref{system} is locally well-posed and by stating the standard criterion for global existence.
\begin{lemma}\label{lm:local_ex}
  Let $(\tau, k) \in \{(0, 0), (0, 1), (1, 1)\}$, let $m \in \R$ and suppose \eqref{initial_data}.
  Then there exist $T_{\max}\in(0,\infty]$ and a uniquely determined classical solution $(u, v)$ of \eqref{system} with regularity \eqref{eq:eps_reg:reg}
  being such that $T_{\max}<\infty$ implies $\limsup_{t \nea \tmax} \|u(\cdot, t)\|_{L^\infty((0, 1))} = \infty$.

  Moreover, $u$ and $v$ are positive in $[0, 1] \times (0, \tmax)$,
  and
  \begin{align}\label{eq:u_mass}
    \int_0^1 u(\cdot, t) = M
    \quad \text{as well as} \quad
    \int_0^1 v(\cdot, t) \le \max\left\{M, \intom \tau v_0 \right\}
    \qquad \text{hold for all $t \in (0, \tmax)$}.
  \end{align}
\end{lemma}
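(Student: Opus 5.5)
The plan is to treat \eqref{system} as a quasilinear parabolic problem and invoke Amann's theory for triangular or upper-triangular systems, after a suitable reformulation in each of the three settings.

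\textbf{Reformulation.} For $(\tau,k)=(1,1)$ I will write the system in the form
\begin{align*}
  U_t = \big(A(U)U_x\big)_x + F(U),
  \qquad A(U) = \begin{pmatrix} D(u) & -S(u) \\ 0 & 1 \end{pmatrix}, \qquad U = (u,v).
\end{align*}
The matrix $A$ is upper triangular with positive diagonal entries (since $D>0$ on $[0,\infty)$), so the system is normally parabolic. Amann's results (e.g.\ \emph{Nonhomogeneous linear and quasilinear elliptic and parabolic boundary value problems}) then provide a unique maximal solution in $C^0([0,\tmax);W^{1,\theta})$ for initial data in $W^{1,\theta}$ with $\theta>1=n$. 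They also provide the extensibility criterion that $\tmax<\infty$ forces the $W^{1,\theta}$-norm, indeed the $L^\infty$-norm, to blow up.

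For $\tau=0$ I will instead solve $v=(-\partial_{xx}+k)^{-1}\big(u-(1-k)M\big)$, adding the normalisation $\intom v=0$ when $k=0$. This reduces \eqref{system} to a scalar nonlocal quasilinear equation, and a Banach fixed point argument in $C^0([0,T];W^{1,\theta})$ gives local existence and uniqueness. The point is that $v_x$ and $v_{xx}$ depend Lipschitz-continuously on $u\in L^\infty$, and $W^{1,\theta}\hookrightarrow C^0$ because $\theta>1$.

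\textbf{Regularity.} $C^\infty$ smoothness in $[0,1]\times(0,\tmax)$ follows by bootstrapping with parabolic Schauder theory.

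\textbf{Blow-up criterion.} To reduce the criterion to $\|u\|_{L^\infty}$, I will show that an $L^\infty$ bound for $u$ on $(0,T)$ implies a $W^{1,\theta}$ bound.
\begin{itemize}
  \item First, $v_x$ is bounded, by elliptic or semigroup estimates.
  \item Then $u$ is Hölder continuous, by the Porzio--Vespri or Ladyzhenskaya type results, since $D(u)$ is bounded above and below once $u$ is bounded.
  \item Finally, Schauder or maximal regularity gives $W^{1,\theta}$ bounds, which allows the solution to be extended.
\end{itemize}
I expect this bootstrap from $L^\infty$ to a $W^{1,\theta}$ bound, done uniformly up to $\tmax$, to be the main technical step. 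I will handle it by citing Amann's criterion that boundedness in $C^\beta$ suffices.

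\textbf{Positivity.} The $u$-equation can be written as $u_t = D(u)u_{xx} + b\,u_x + c\,u$ with bounded coefficients on compact time intervals. Since $u_0\not\equiv0$, the strong maximum principle together with Hopf's lemma at the Neumann boundary gives $u>0$ in $[0,1]\times(0,\tmax)$. For $v$ the argument depends on the setting:
\begin{itemize}
  \item if $\tau=1$, the equation $v_t-v_{xx}+v=u>0$ with $v_0\ge0$ gives $v>0$ by the strong maximum principle;
  \item if $(\tau,k)=(0,1)$, the elliptic equation $-v_{xx}+v=u>0$ yields $v>0$.
\end{itemize}
If $k=0$, then $v$ has zero mean, so positivity cannot hold literally. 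I would interpret the statement as concerning $u$ only, or assert $v>0$ just for $k=1$.

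\textbf{Mass bounds.} Integrating the $u$-equation and using the Neumann conditions gives $\intom u = M$. For $\tau=1$, integrating the $v$-equation yields $y' = -y + M$ with $y = \intom v$, hence $y\le\max\{M,\intom v_0\}$. For $(\tau,k)=(0,1)$ we get $\intom v = M$ directly, and for $k=0$ we have $\intom v=0\le M$.
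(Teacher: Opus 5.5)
Your proposal is correct and follows essentially the same route as the paper. The paper only cites Amann's theory for the fully parabolic case and standard fixed-point arguments (as in Cie\'slak--Winkler or Tello--Winkler) for the parabolic--elliptic cases, then invokes the strong maximum principle and integrates both equations for \eqref{eq:u_mass}, so you mainly supply details it omits. Your caveat about $k=0$ is also justified: the normalisation $(1-k)\intom v = 0$ from \eqref{eq:eps_reg:reg} forces $v$ to change sign or vanish identically, so the positivity claim for $v$ can only hold for $k=1$, while the bound $\intom v = 0 \le \max\{M, \intom \tau v_0\}$ stays trivially true.
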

\begin{proof}
  Local solvability follows from \cite[Theorem~14.4, Theorem~14.6 and Corollary~14.7]{AmannNonhomogeneousLinearQuasilinear1993} for the fully parabolic case
  and similarly as in \cite[Section~1]{CieslakWinklerFinitetimeBlowupQuasilinear2008} or \cite[Theorem~2.1]{TelloWinklerChemotaxisSystemLogistic2007} for the parabolic--elliptic setting.
  The strong maximum principle implies positivity and integrating the first and second equation in \eqref{system} yields \eqref{eq:u_mass}.
\end{proof}

That the interplay between diffusion and cross-diffusion in \eqref{system} is critical is indicated by a scaling-related argument:
When considering the temporal derivative of $\intom u^p$, worrisome terms can barely (or barely not) be controlled by dissipative ones
if one makes direct use of the Gagliardo--Nirenberg inequality which asserts that for all $\alpha > 2$ there is $C_\alpha > 0$ such that
\begin{align}\label{eq:gni}
  \int_0^1 |\varphi|^{\alpha}
  \le C_\alpha \left( \int_0^1 \big|(|\varphi|^{\frac{\alpha-2}{2}})_x\big|^2 \right) \left(\int_0^1 |\varphi|\right)^2 + C_\alpha \left(\int_0^1 |\varphi|\right)^\alpha
\end{align}
for all $\varphi \in L^1((0, 1))$ with $|\varphi|^\frac{\alpha-2}{2} \in W^{1, 2}((0, 1))$.
If $M = \intom u$ is sufficiently small, the coefficient of the gradient term on the right-hand side of \eqref{eq:gni} applied to $\varphi = u$ becomes small as well --
which allows for the desired absorption of the most problematic term stemming from cross-diffusion.
While such a line of reasoning is able to prove Proposition~\ref{prop:gb_small_mass}, it does not provide any information for large $M$.

Instead, the following refinement of \eqref{eq:gni} will be a crucial ingredient in the proof of Proposition~\ref{prop:eps_reg}.
\begin{lemma}\label{lm:eps_inq}
  Let $\alpha > 2$ and $\eta > 0$.
  There exists $\eps > 0$ such that for all $\delta > 0$ there is $C > 0$ with the following property:
  If $\varphi\in L^1((0,1))$ satisfies $|\varphi|^{\frac{\alpha-2}{2}}\in W^{1,2}((0,1))$ as well as
  \begin{align}\label{eq:esp_inq}
    \int_a^b |\varphi| < \eps
    \qquad \text{for all $(a, b) \subset (0, 1)$ with $|b-a| < \delta$},
  \end{align}
  then
   \begin{align}\label{eq:interpolation}
     \int_0^1 |\varphi|^{\alpha} \le
     \eta \int_0^1 \big|(|\varphi|^{\frac{\alpha-2}{2}})_x\big|^2 + C.
  \end{align}
\end{lemma}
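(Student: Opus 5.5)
The plan is to localise the Gagliardo--Nirenberg inequality \eqref{eq:gni} to short intervals, where the $L^1$ norm of $\varphi$ is small by \eqref{eq:esp_inq}. We then sum over a covering of $(0,1)$ by such intervals.

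The first step is to prove a rescaled version of \eqref{eq:gni} on an interval $I=(a,b)$ of length $\ell=b-a$. Applying \eqref{eq:gni} to $\psi(y)\defs\varphi(a+\ell y)$ and scaling back gives
\begin{align*}
  \int_I |\varphi|^\alpha
  \le C_\alpha \Big(\int_I \big|(|\varphi|^{\frac{\alpha-2}{2}})_x\big|^2\Big)\Big(\int_I|\varphi|\Big)^2
  + C_\alpha \ell^{1-\alpha}\Big(\int_I|\varphi|\Big)^\alpha .
\end{align*}
The powers of $\ell$ in the gradient term should cancel; this is exactly the criticality. To check: $\int_0^1|\psi|^\alpha=\ell^{-1}\int_I|\varphi|^\alpha$, $\int_0^1|\psi|=\ell^{-1}\int_I|\varphi|$, and $\int_0^1|(|\psi|^{\frac{\alpha-2}{2}})_y|^2=\ell\int_I|(\cdot)_x|^2$. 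The gradient term therefore carries the factor $\ell\cdot\ell^{-2}=\ell^{-1}$, matching the left-hand side, while the last term carries $\ell^{-\alpha}$. After multiplying by $\ell$ we obtain the displayed inequality.

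The second step fixes the constants. Choose $\eps>0$ so small that $C_\alpha\eps^2\le\eta/2$. Given $\delta>0$, pick $N\in\N$ with $1/N<\delta$ and set $I_j=((j-1)/N,j/N)$ for $j=1,\dots,N$. Each $I_j$ has length less than $\delta$, so \eqref{eq:esp_inq} gives $\int_{I_j}|\varphi|<\eps$. (If one prefers to avoid boundary issues, take closed intervals of length strictly below $\delta$; this is harmless.)

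The third step sums the local inequalities over $j$:
\begin{align*}
  \int_0^1|\varphi|^\alpha
  \le C_\alpha\eps^2\int_0^1\big|(|\varphi|^{\frac{\alpha-2}{2}})_x\big|^2 + C_\alpha N\cdot N^{\alpha-1}\eps^\alpha .
\end{align*}
This yields \eqref{eq:interpolation} with $C\defs C_\alpha N^\alpha\eps^\alpha$, which depends only on $\alpha$, $\eta$ and $\delta$, as required. Note that $\eps$ depends only on $\alpha$ and $\eta$, not on $\delta$, which is the order of quantifiers in the statement.

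The main obstacle is the rescaling step, namely confirming that the localised inequality on $I$ holds with a constant independent of $\ell$ in front of the gradient term. This requires that $|\varphi|^{\frac{\alpha-2}{2}}$ restricted to $I$ lies in $W^{1,2}(I)$, which is immediate, and that \eqref{eq:gni} transfers by the affine change of variables. The scaling computation in the first step settles this. No further difficulty is expected, since the intervals are disjoint up to endpoints and the integrals simply add.
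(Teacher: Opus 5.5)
Your proposal is correct and follows essentially the same route as the paper. Both arguments use a scale-invariant Gagliardo--Nirenberg inequality on subintervals (your rescaled version of \eqref{eq:gni} is exactly the paper's estimate \eqref{eq:eps_reg_lp:gni} with $\psi=|\varphi|^{\frac{\alpha-2}{2}}$), choose $\eps$ small enough that the gradient coefficient is at most $\eta$, and sum over a partition of $(0,1)$ into intervals shorter than $\delta$; your choice $1/N<\delta$ even respects the strict inequality in \eqref{eq:esp_inq} slightly more carefully than the paper's partition into intervals of length exactly $\delta$.
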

\begin{proof}
  We first make use of the Gagliardo--Nirenberg inequality to find $c_1, c_2 > 0$ such that
  \begin{align}\label{eq:eps_reg_lp:gni}
        \int_a^b |\psi|^\frac{2\alpha}{\alpha-2}
    \le c_1 \left( \int_a^b |\psi_x|^2 \right) \left( \int_a^b |\psi|^\frac{2}{\alpha-2} \right)^2 + c_2 |b-a|^{1-\alpha} \left( \int_a^b |\psi|^\frac{2}{\alpha-2} \right)^{\alpha}
  \end{align}
  for all $\psi\in W^{1, 2}((a, b))$ and all $(a, b) \subseteq (0, 1)$.
  By scaling and translating, we see that $c_1$ and $c_2$ do not depend on $(a, b)$.

  We now set $\eps \defs \min\{(\frac{\eta}{c_1})^\frac12, 1\}$ and fix $\delta>0$ according to \eqref{eq:esp_inq}.
  After shrinking $\delta$, if necessary, we may assume $\delta^{-1} \in \N$.
  Thus, by splitting $(0, 1)$ in intervals of length $\delta$ and choosing $\psi=|\varphi|^\frac{\alpha-2}{2}$ in \eqref{eq:eps_reg_lp:gni}, we obtain
  \begin{align*}
          \int_0^1 |\varphi|^{\alpha}
    &=    \sum_{j=0}^{\delta^{-1}-1} \int_{j\delta}^{j\delta + \delta} \left(|\varphi|^\frac{\alpha-2}{2}\right)^\frac{2\alpha}{\alpha-2} \notag \\
    &\le  c_1 \eps^2 \sum_{j=0}^{\delta^{-1}-1} \int_{j\delta}^{j\delta + \delta} \left(|\varphi|^\frac{\alpha-2}{2}\right)_x^2 + c_2 \delta^{-\alpha} \eps^\alpha \notag \\
    &\le  \eta \int_0^1 \left(\varphi^\frac{\alpha-2}{2}\right)_x^2 + c_2 \delta^{-\alpha}
  \end{align*}
  and hence \eqref{eq:interpolation} for $C \defs c_2 \delta^{-\alpha}$.
\end{proof}

Next, we state a time-weighted version of maximal Sobolev regularity, which allows us to bridge the cross-diffusion term with the cell density $u$ via the second equation in \eqref{system}.
Related estimates have already been used for deriving uniform boundedness for certain fully parabolic chemotaxis systems
(see, for instance, \cite{CaoLargeTimeBehavior2017}, \cite{caotao2021boundedness}).
However, we choose to provide a proof here in order to emphasise that the constant on the right-hand side of \eqref{eq:max_reg:est} below is independent of $\gamma$.
\begin{lemma}\label{lm:max_reg}
  Let $(\tau, k) \in \{(0, 0), (0, 1), (1, 1)\}$ and let $m \in \R$.
  For all $r \in (1, \infty)$, there exists $C > 0$ such that for all $\gamma \in [0, \frac1{2r}]$ and all initial data fulfilling not only \eqref{initial_data} but also $\tau v_0 \in \sob2r$ with $\partial_\nu (\tau v_0) = 0$ on $\partial \Omega$,
  the solution $(u, v)$ of \eqref{system} given by Lemma~\ref{lm:local_ex} fulfils
  \begin{align}\label{eq:max_reg:est}
        \intnstom \ure^{\gamma r s} |v_{xx}(x, s)|^r \dx \ds
    \le C \intnstom \ure^{\gamma r s} (u(x, s))^r \dx \ds
        + C \|\tau v_0\|_{\sob2r}^r
  \end{align}
  for all $t \in (0, \tmax]$.
\end{lemma}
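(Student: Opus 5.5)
The plan is to transform the time-weighted estimate into a standard (unweighted) maximal regularity statement for a modified linear problem, treating the three cases separately.

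We set $\tilde v(x,s) \defs \ure^{\gamma s} v(x,s)$ and $\tilde u(x,s) \defs \ure^{\gamma s} u(x,s)$.

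\emph{Elliptic cases.} If $\tau = 0$, the second equation in \eqref{system} holds pointwise in time: $v_{xx} = kv + (1-k)M - u$.
\begin{itemize}
  \item For $k=1$, elliptic $W^{2,r}$ regularity for $-v_{xx}+v = u$ with Neumann data, which is elementary in one dimension, gives $\|v_{xx}(\cdot,s)\|_{\leb r} \le c\|u(\cdot,s)\|_{\leb r}$.
  \item For $k=0$, we have $v_{xx} = M - u$ directly, and $M = \intom u \le \|u\|_{\leb r}$ by Hölder's inequality.
\end{itemize}
Multiplying by $\ure^{\gamma r s}$ and integrating in time gives \eqref{eq:max_reg:est} with a constant independent of $\gamma$, and with no initial-data term.

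\emph{Fully parabolic case.} Here $\tilde v$ solves
\[
  \tilde v_t = \tilde v_{xx} - (1-\gamma)\tilde v + \tilde u,
  \qquad \tilde v_x = 0 \text{ on } \{0,1\}, \qquad \tilde v(\cdot,0) = v_0 .
\]
Since $\gamma \le \frac1{2r} \le \frac12$, the coefficient $\lambda \defs 1-\gamma$ lies in $[\frac12,1]$.

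We apply the maximal Sobolev regularity for $z_t = z_{xx} - \lambda z + f$ on $(0,1)\times(0,t)$ with Neumann data. The operator $-\partial_{xx}+\lambda$ is sectorial with spectrum in $[\lambda,\infty) \subset [\frac12,\infty)$, so it has maximal $L^r$-regularity on the infinite half-line. This yields
\[
  \int_0^t \|z_{xx}\|_{\leb r}^r \le C\Big(\int_0^t \|f\|_{\leb r}^r + \|z(0)\|^r_{(L^r,W^{2,r}_N)_{1-1/r,r}}\Big).
\]
The estimate holds with $C$ uniform in $t \in (0,\infty)$, because the semigroup is exponentially stable. The trace-space norm is bounded by $\|v_0\|_{\sob2r}$.

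The main obstacle is making the constant uniform in $\lambda \in [\frac12,1]$. I would handle it in one of two ways:
\begin{itemize}
  \item Treat $(1-\lambda)z$ as a perturbation. Write $z_t = z_{xx} - z + (f + (1-\lambda)z)$ with the fixed operator $\lambda = 1$. Then bound $\int_0^t \|z\|_{\leb r}^r \le c\int_0^t\|f\|_{\leb r}^r + c\|v_0\|_{\leb r}^r$ using the decay $\|\ure^{s(\Delta-\lambda)}\|_{L^r\to L^r} \le \ure^{-s/2}$ and Young's convolution inequality in time. Here $\lambda \ge \frac12$ is used.
  \item Alternatively, invoke the known fact that maximal regularity constants depend continuously on (or are uniform over) compact families of such lower-order shifts.
\end{itemize}

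Finally, we insert $f = \tilde u$ and note $\tilde v_{xx} = \ure^{\gamma s}v_{xx}$. Raising to the power $r$ gives exactly \eqref{eq:max_reg:est}, first for $t < \tmax$ and then for $t = \tmax$ by monotone convergence.
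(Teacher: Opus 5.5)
Your proposal is correct and follows essentially the paper's route. In the elliptic cases the estimate holds pointwise in time; in the fully parabolic case you pass to $\ure^{\gamma t}v$, apply maximal regularity for a fixed, $\gamma$-independent shifted Neumann operator, and treat the remaining $\gamma$-dependent zeroth-order term as a source controlled by a separate weighted $L^r$ bound for $v$. The differences are only cosmetic: the paper freezes the shift at $\frac12$ and bounds $\int_0^t\int_0^1 \ure^{\gamma r s} v^r$ by testing the $v$-equation with $v^{r-1}$, whereas your first option freezes it at $1$ and uses $L^r$-contractivity of the Neumann heat semigroup together with Young's convolution inequality (your second, ``continuous dependence'' option is not needed).
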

\begin{proof}
  If $\tau = 0$, elliptic regularity (cf.\ \cite[Theorem~19.1]{FriedmanPartialDifferentialEquations1976}) yields $C > 0$ such that $\int_0^1 |v_{xx}|^r \le C \int_0^1 u^r$, which already yields the desired estimate.
  (In the considered one-dimensional setting, we could alternatively directly use the identity $|v_{xx}|^r = |k v + (1-k) M - u|^r$ and, if $k = 1$, test the second equation in \eqref{system} with $v^{r-1}$ to obtain $\int_0^1 v^r \le C_r \int_0^1 u^r$.)

  For $(\tau, k) = (1, 1)$, we note that the function $z$ defined by $z(x, t) \defs \ure^{\gamma t} v(x, t)$ for $x \in (0, 1)$ and $t \in (0, \tmax)$ solves
  \begin{align*}
    \begin{cases}
      z_t = z_{xx} - \frac12 z + (\gamma - \frac12) \ure^{\gamma t} v + \ure^{\gamma t} u  & \text{in $(0, 1) \times (0, \tmax)$}, \\
      z_x = 0 & \text{on $\{0, 1\} \times (0, \tmax)$}, \\
      z(\cdot, 0) = v_0 & \text{in $(0, 1)$}
    \end{cases}
  \end{align*}
  According to \cite[Theorem~3.1]{HieberPrussHeatKernelsMaximal1997}, there is $c_1 > 0$ such that
  \begin{align*}
        \intnstom |z_{xx}(x, s)|^r \dx \ds
    \le c_1 \intnstom \ure^{\gamma r s} |(\gamma - \tfrac12) v(x, s) + u (x, s)|^r \dx \ds
        + c_1 \|v_0\|_{\sob2r}^r
  \end{align*}
  for all $t \in (0, \tmax]$.
  As moreover
  \begin{align*}
          \ddt \left( \ure^{\gamma r t} \int_0^1 v^r \right)
    &=    -(r-1) \int_0^1 v^{r-2} |v_x|^2
          -(1-\gamma r) \ure^{\gamma r t} \int_0^1 v^r
          + \ure^{\gamma r t} \int_0^1 u^r \\
    &\le  -\frac12 \ure^{\gamma r t} \int_0^1 v^r
          + \ure^{\gamma r t} \int_0^1 u^r
    \qquad \text{in $(0, \tmax)$}
  \end{align*}
  we may estimate
  \begin{align*}
        \intnstom \ure^{\gamma r s} (v(x, s))^r
    \le 2 \intnstom \ure^{\gamma r s} (u(x, s))^r
        + \int_0^1 v_0^r
  \end{align*}
  and hence (due to $|\gamma - \frac12| \le \gamma + \frac12 \le (2r)^{-1} + \frac12 \le 1$)
  \begin{align*}
    &\pe \intnstom \ure^{\gamma r s} |(\gamma - \tfrac12) v(x, s) + u (x, s)|^r \\
    &\le  2^{r-1} \intnstom \ure^{\gamma rs} (v(x, s))^r + 2^{r-1} \intnstom \ure^{\gamma rs} (u(x, s))^r \\
    &\le 3 \cdot 2^{r-1} \intnstom \ure^{\gamma rs} (u(x, s))^r + 2^{r-1} \int_0^1 v_0^r
  \end{align*}
  for all $t \in (0, \tmax]$. That is, we obtain \eqref{eq:max_reg:est} for a certain $C > 0$.
\end{proof}

We are now ready to show the key part of the proof of Proposition~\ref{prop:eps_reg},
namely that $\varepsilon$-regularity with respect to $L^1$ (i.e., \eqref{eq:eps_reg:eps_cond}) implies an $L^p$ estimate for arbitrarily large $p>1$.
\begin{lemma}\label{lm:eqs_reg_lp}
  Let $(\tau, k) \in \{(0, 0), (0, 1), (1, 1)\}$, $m \in \R$ and $p \in (\max\{1, 1-m\}, \infty)$.
  Then there exists $\eps(p) > 0$ with the following property:
  If there are $\delta > 0$ and initial data $u_0, v_0$ fulfilling \eqref{initial_data} such that
  the solution $(u, v)$ of \eqref{system} given by Lemma~\ref{lm:local_ex} fulfils \eqref{eq:eps_reg:eps_cond} (with $\eps$ replaced by $\eps(p)$),
  then there is $C > 0$ such that
  \begin{align}\label{eq:eps_reg_lp:statement}
      \intom u^p(\cdot, t) \le C
      \qquad \text{for all $t \in (0, \tmax)$}.
  \end{align}
\end{lemma}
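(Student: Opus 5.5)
Throughout, $c_i$ denote positive constants. The plan is to test the first equation in \eqref{system} with a power-type function of $u$, absorb the cross-diffusion term with Lemma~\ref{lm:eps_inq}, and control the $v$-contribution in the fully parabolic case with Lemma~\ref{lm:max_reg}.

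\textbf{Step 1: testing.} Set $\phi(u) \defs \int_0^u\int_0^\sigma (\rho+1)^{m-1}\drho\,d\sigma$ as a convex weight. Simpler is to test directly with $(u+1)^{p-1}$. Writing $\beta \defs p+m-1>0$ (this is why $p>1-m$ is assumed), we obtain
\begin{align*}
  \frac1p\ddt\intom (u+1)^p + (p-1)\intom (u+1)^{p+m-3}u_x^2
  = (p-1)\intom u(u+1)^{p+m-2} u_x v_x .
\end{align*}
We integrate the right-hand side by parts via an antiderivative $G(u)=\int_0^u \sigma(\sigma+1)^{p+m-2}\,d\sigma \le c_1 (u+1)^{p+m}$. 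It becomes $-(p-1)\intom G(u)v_{xx}$, which is bounded by $c_2\intom (u+1)^{p+m}|v_{xx}|$. The dissipation equals $c_3\intom |((u+1)^{\beta/2})_x|^2$.

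\textbf{Step 2: the $v_{xx}$ term.} If $\tau=0$, then $|v_{xx}|\le kv+M+u$, and for $k=1$ elliptic bounds give $\|v\|_{L^\infty}\le c_4$ in 1D. The term is therefore at most $c_5\intom (u+1)^{p+m+1} + c_5$. For $\tau=1$, Young's inequality gives $\intom (u+1)^{p+m}|v_{xx}| \le \intom (u+1)^{p+m+1} + \intom |v_{xx}|^{p+m+1}$. The last integral is handled in time-integrated form via Lemma~\ref{lm:max_reg} with $r=p+m+1$ and weight $\ure^{\gamma r s}$. This turns it into $C\int\!\!\int \ure^{\gamma rs}u^{r}$ plus data, where $C$ is independent of $\gamma$. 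Any lack of regularity of $v_0$ is avoided by restarting at a small positive time.

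\textbf{Step 3: absorption.} Apply Lemma~\ref{lm:eps_inq} to $\varphi=u+1$ with $\alpha = \beta+2 = p+m+1>2$ and small $\eta$. Condition \eqref{eq:esp_inq} for $u+1$ follows from \eqref{eq:eps_reg:eps_cond} once $\delta<\eps$ (replacing $\eps$ by $2\eps$). This yields $\intom (u+1)^{p+m+1}\le \eta\intom|((u+1)^{\beta/2})_x|^2 + C_\delta$. Choosing $\eta$ so that $(c_5+C+1)\eta$ is below half of $c_3$, and choosing $\eps(p)$ accordingly, gives
\begin{align*}
  \ddt y + c_6\intom|((u+1)^{\beta/2})_x|^2 \le C + (\text{maximal-regularity term}), \qquad y\defs\intom(u+1)^p .
\end{align*}
Since $p<p+m+1$, Hölder's and Young's inequalities give $y \le \intom (u+1)^{p+m+1}+c_7$, which together with Lemma~\ref{lm:eps_inq} gives $y\le \eta' \cdot(\text{dissipation})+C$. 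For $\tau=0$ this leads to $y'+c_8y\le C$ and hence a uniform bound via an ODE comparison. For $\tau=1$, multiply by $\ure^{\gamma p t}$ with $\gamma\le\frac1{2r}$ small, integrate in time, and absorb the Lemma~\ref{lm:max_reg} term using the dissipation. This is possible because the constant there does not depend on $\gamma$, and $\gamma y$ is controlled by the dissipation plus constants. Dividing by $\ure^{\gamma p t}$ gives the uniform bound \eqref{eq:eps_reg_lp:statement}.

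\textbf{Main obstacle.} The main difficulty is the $\tau=1$ bookkeeping: ensuring that the smallness in $\eta$ dominates the $\gamma$-independent constant from Lemma~\ref{lm:max_reg}, while the time weights remain consistent. In the time-integrated inequality the bound for $\intom u^r$ must be applied pointwise in time, with the constant $C_\delta$ producing only $\int_0^t \ure^{\gamma rs}\ds$-type terms, which are harmless.
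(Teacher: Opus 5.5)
There is a genuine gap for $m<-1$. The lemma explicitly covers this range, and it is exactly the range needed later: Theorem~\ref{th:gb_pp} invokes Proposition~\ref{prop:eps_reg} with $m\le -1$ and $p=\max\{p_0,2-m\}$.

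Your Step~3 rests on ``since $p<p+m+1$'', i.e.\ on $m>-1$. For $m<-1$ one has $p>p+m+1$, and then $y=\intom(u+1)^p$ is \emph{not} bounded by $\eta'\intom|((u+1)^{\beta/2})_x|^2+C$, even under \eqref{eq:esp_inq}. Take a plateau $u\approx h$ on an interval of length $\ell$ with small mass $\mu=h\ell$. It gives $y\approx \mu h^{p-1}$, while the dissipation is $\approx h^{\beta+1}/\mu$. Their ratio $\approx\mu^2h^{-m-1}$ is unbounded as $h\to\infty$. Gagliardo--Nirenberg only yields the sublinear control $\intom|((u+1)^{\beta/2})_x|^2\ge c\,y^{\tilde\lambda}-C$ with $\tilde\lambda=\frac{p+m}{p-1}<1$.

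Consequently, your claim that ``$\gamma y$ is controlled by the dissipation plus constants'' fails for every fixed $\gamma>0$. The exponentially weighted time integration for $\tau=1$ then does not close. It needs a linear damping term: without a weight, the constants from Lemma~\ref{lm:eps_inq} grow linearly in $t$, and with a weight the term $\gamma y$ cannot be absorbed. For $\tau=0$ the damage is repairable, since $y'\le -c\,y^{\tilde\lambda}+C$ still bounds $y$, but your asserted $y'+c_8y\le C$ is not justified there. For $m\ge -1$ your fixed-$\gamma$ argument does go through.

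The missing idea is to let the damping rate depend on the size of the solution itself.
\begin{enumerate}
\item Fix $T<\tmax$ and set $A\defs\sup_{(0,T)}y\ge 1$ and $\lambda\defs\min\{\tilde\lambda,\frac12\}$. Then $y^\lambda\ge A^{\lambda-1}y$ on $(0,T)$. This turns the sublinear dissipation bound into linear damping with rate $\gamma=c_8A^{\lambda-1}$.
\item Since $A\ge1$, one may take $c_8\le\frac1{4r}$, so $\gamma\le\frac1{2r}$ regardless of $A$. This is the real reason the constant in Lemma~\ref{lm:max_reg} must be independent of $\gamma$.
\item Use the variation-of-constants formula with weight $\ure^{-\gamma(t-s)}$. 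Lemma~\ref{lm:max_reg} and \eqref{eq:interpolation}, applied in time-integrated form, absorb the cross-diffusion and $v_{xx}$ contributions into the dissipation.
\item Every constant source term then contributes only $\int_0^t\ure^{-\gamma(t-s)}\ds\le\gamma^{-1}=A^{1-\lambda}/c_8$, which is sublinear in $A$.
\item This gives $y(t)\le c\,A^{1-\lambda}+c'\le\frac A2+c''$ on $(0,T)$, hence $A\le 2c''$ independently of $T$.
\end{enumerate}
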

\begin{proof}
  Recalling the regularity and uniqueness properties asserted in Lemma~\ref{lm:local_ex} and switching to the solution emanating from $(u(\cdot, t_0), v(\cdot, t_0))$ for some $t_0 \in (0, \tmax)$, if necessary and if $\tau = 1$, we may without loss of generality assume $\tau v_0 \in \con2$ and $\partial_\nu (\tau v_0) = 0$ on $\partial \Omega$.

  We set $c_1 \defs \frac{2p(p-1)}{(p+m-1)^2} > 0$ and $c_2 \defs \frac{p(p-1)}{p+m} > 0$,
  let $c_3 > 0$ be as given by Lemma~\ref{lm:max_reg} applied to $r \defs p+m+1 \in (1, \infty)$
  and $\eps > 0$ be as given by Lemma~\ref{lm:eps_inq} applied to $\alpha \defs p+m+1 > 2$ and $\eta \defs \frac{c_1}{c_2+c_2c_3} > 0$,
  and then set $\eps(p) \defs \frac\eps2$.

  Supposing now that \eqref{eq:eps_reg:eps_cond} (with $\eps$ replaced by $\eps(p)$) is fulfilled for some $\delta > 0$,
  using that constants are uniformly integrable on $(0, 1)$ and shrinking $\delta$, if necessary, we see that $\varphi = u+1$ satisfies \eqref{eq:esp_inq}.
  Thus, by Lemma~\ref{lm:eps_inq} there exists $c_4>0$ such that
  \begin{align}\label{eq:eps_reg_lp:eps_gni}
    (c_2+c_2c_3) \intom (u+1)^{p+m+1}
    \le c_1 \intom \big((u+1)^{\frac{p+m-1}{2}}\big)_x^2 + c_4
    \qquad \text{in $(0, \tmax)$}.
  \end{align}

  Testing the first equation in \eqref{system} with $(u+1)^{p-1}$ yields
  \begin{align}\label{eq:eps_reg_lp:ddt_p}
    &\pe  \ddt \intom (u+1)^p + p(p-1) \intom (u+1)^{p+m-3} u_x^2 \notag \\
    &=    p(p-1) \intom u (u+1)^{p+m-2} u_x v_x \notag \\
    &=    p(p-1) \intom  \left(\int_0^u s (s+1)^{p+m-2} \ds \right)_x v_x \notag \\
    &\le  \frac{p(p-1)}{p+m} \intom (u+1)^{p+m} |v_{xx}| \notag \\
    &\le  \frac{p(p-1)}{p+m} \left( \intom (u+1)^{p+m+1} + \intom |v_{xx}|^{p+m+1} \right)
    \qquad \text{in $(0, \tmax)$}.
  \end{align}
  Here, the dissipative term on the left-hand side can be rewritten as
  \begin{align}\label{eq:eps_reg_lp:diss}
          p(p-1) \intom (u+1)^{p+m-3} u_x^2
    &=    2c_1 \intom \big((u+1)^{\frac{p+m-1}{2}}\big)_x^2
    \qquad \text{in $(0, \tmax)$},
  \end{align}
  where we recall that $c_1 = \frac{2p(p-1)}{(p+m-1)^2}$.
  By the Gagliardo--Nirenberg inequality, there are $c_5 > 0$ and $\tilde \lambda > 0$ such that
  \begin{align*}
        \left( \int_0^1 |\varphi|^\frac{2p}{p+m-1} \right)^{\tilde \lambda}
    \le c_5 \left( \int_0^1 |\varphi_x|^2 \right) \left( \int_0^1 |\varphi|^\frac{2}{p+m-1} \right)^{p \tilde \lambda - (p+m-1)}
        + c_5 \left( \int_0^1 |\varphi|^\frac{2}{p+m-1} \right)^{p \tilde \lambda}
  \end{align*}
  for all $\varphi \in \sob12$,
  hence
  \begin{align}\label{eq:eps_reg_lp:diss_up}
          c_1 \int_0^1\big((u+1)^{\frac{p+m-1}{2}}\big)_x^2
    &\ge  c_6 \left( \int_0^1 (u+1)^p \right)^{\tilde \lambda} - c_7
     \ge  c_8 \left( \int_0^1 (u+1)^p \right)^\lambda - c_7
    \qquad \text{in $(0, \tmax)$}
  \end{align}
  for certain $c_6, c_7 > 0$, $\lambda \defs \min\{\tilde \lambda, \frac12\} \in (0, 1)$ and $c_8 \defs \min\{c_6, \frac1{4r}\} \in (0, \frac{1}{2r})$.

  Next, we fix $T \in (0, \tmax)$ but emphasize that all constants $c_i$ below do not depend on $T$.
  Then
  \begin{align*}
  A \defs \sup_{t \in (0, T)} \intom (u(\cdot, t)+1)^p \ge 1
  \end{align*} is finite
  and combining \eqref{eq:eps_reg_lp:ddt_p}, \eqref{eq:eps_reg_lp:diss} and \eqref{eq:eps_reg_lp:diss_up} yields
  \begin{align*}
    &\pe  \ddt \intom (u+1)^p\\
    &\le  -A^{\lambda-1} c_8 \intom (u+1)^p - c_1 \intom \big((u+1)^{\frac{p+m-1}{2}}\big)_x^2 + c_7 + c_2 \intom (u+1)^{p+m+1} + c_2 \intom |v_{xx}|^{p+m+1}
  \end{align*}
  in $(0, T)$ since  $c_2 = \frac{p(p-1)}{p+m}$.
  Thanks to the variation-of-constants formula,
  \begin{align*}
          \intom (u(\cdot, t)+1)^p
    &\le  \ure^{-A^{\lambda-1} c_8 t} \intom (u_0+1)^p
          - c_1 \intnstom \ure^{-A^{\lambda-1} c_8 (t-s)} \big((u(x, s)+1)^{\frac{p+m-1}{2}}\big)_x^2 \dx \ds \notag \\
    &\pe  + c_7 \int_0^t \ure^{-A^{\lambda-1} c_8 (t-s)} \ds
          + c_2 \intnstom \ure^{-A^{\lambda-1} c_8 (t-s)} (u(x, s)+1)^{p+m+1} \dx \ds \notag \\
    &\pe  + c_2 \intnstom \ure^{-A^{\lambda-1} c_8 (t-s)} |v_{xx}(x, s)|^{p+m+1} \dx \ds \notag \\
    &\sfed I_1(t) + I_2(t) + I_3(t) + I_4(t) + I_5(t)
    \qquad \text{for all $t \in (0, T)$}.
  \end{align*}
  Recalling that $c_3 > 0$ is the constant given by Lemma~\ref{lm:max_reg} for $r = p+m+1$
  and noting that  $\gamma \defs A^{\lambda-1} c_8 \le \frac1{2r}$,
  we may conclude that
  \begin{align*}
    &\pe  \intnstom \ure^{-A^{\lambda-1} c_8 (t-s)} |v_{xx}(x, s)|^{p+m+1} \dx \ds \\
    &\le  c_3 \intnstom \ure^{-A^{\lambda-1} c_8 (t-s)} (u(x, s))^{p+m+1} \dx \ds
          + c_3 \|\tau v_0\|_{\sob2{p+m+1}}^{p+m+1} \\
    &\le  c_3 \intnstom \ure^{-A^{\lambda-1} c_8 (t-s)} (u(x, s)+1)^{p+m+1} \dx \ds
          + c_9
  \end{align*}
  for all $t \in (0, T)$, where $c_9 \defs c_3 \|\tau v_0\|_{\sob2{p+m+1}}^{p+m+1} \ge 0$.

  The integral on the right-hand side herein can be estimated by using \eqref{eq:eps_reg_lp:eps_gni};
  for all $t \in (0, T)$, we have
  \begin{align*}\nn
    &\pe (c_2+c_2c_3) \intnstom \ure^{-A^{\lambda-1} c_8 (t-s)} (u(x, s)+1)^{p+m+1} \dx \ds \\
    & \le c_1\intnstom \ure^{-A^{\lambda-1} c_8 (t-s)} \big((u(x, s)+1)^{\frac{p+m-1}{2}}\big)_x^2 \dx \ds + c_4\int_0^t \ure^{-A^{\lambda-1} c_8 (t-s)} \ds,
  \end{align*}
  so that
  \begin{align*}
     I_2(t) + I_4(t) + I_5(t)
     \le c_9 + \frac{c_4}{c_7} I_3(t)
     \qquad \text{for all $t \in (0, T)$}.
  \end{align*}
  Since a direct integration moreover gives
  \begin{align*}
          \int_0^t \ure^{-A^{\lambda-1} c_8 (t-s)} \ds
    \le   \frac{A^{1-\lambda}}{c_8}
    \qquad \text{for all $t \in (0, T)$},
  \end{align*}
  and as
  \begin{align*}
      I_1(t) \le I_1(0) \le A^{1-\lambda} \intom (u_0+1)^p \sfed A^{1-\lambda} c_{10}
      \qquad \text{for all $t \in (0, T)$},
  \end{align*}
  we conclude
  \begin{align*}
      \int_0^1 (u(\cdot, t) + 1)^p
      \le A^{1-\lambda} \left(c_{10} + \frac{c_7}{c_8} + \frac{c_4}{c_8}\right) + c_9
      \le \frac12 A + c_{11}
      \qquad \text{for all $t \in (0, T)$}
  \end{align*}
  for some $c_{11} > 0$, where in the last step we have used Young's inequality and that $\lambda \in (0, 1)$.
  Taking the supremum over $(0, T)$ yields $A \le 2c_{11}$,
  so that the statement follows upon taking $T \nea \tmax$.
\end{proof}

Together with a well-established Moser iteration argument, we are ready to prove the boundedness criterion in Proposition~\ref{prop:eps_reg}.
\begin{proof}[Proof of Proposition~\ref{prop:eps_reg}]
  Local existence of nonnegative, uniquely determined solutions in the regularity class \eqref{eq:eps_reg:reg} has been asserted in Lemma~\ref{lm:local_ex}.

  By \cite[Lemma~A.1]{TaoWinklerBoundednessQuasilinearParabolic2012}, there is $p_0 \in (1, \infty)$ such that if $\int_0^1 u^p$ is bounded in $(0, \tmax)$ for any $p \ge p_0$, then so is $t \mapsto \|u(\cdot, t)\|_{\leb \infty}$.
  Thus, we fix $p \defs \max\{p_0, 2-m\} > \max\{1, 1-m\}$ and set $\eps \defs \eps(p)$ with $\eps(p)$ given by Lemma~\ref{lm:eqs_reg_lp}.
  If \eqref{eq:eps_reg:eps_cond} holds for this $\eps$ and some $\delta > 0$, then Lemma~\ref{lm:eqs_reg_lp} yields \eqref{eq:eps_reg_lp:statement} for some $C > 0$, whence $u$ is bounded in $(0, 1) \times (0, \tmax)$.
\end{proof}

We close this section by discussing the condition \eqref{eq:eps_reg:eps_cond}.

\begin{prop}\label{prop:eps_reg_alt_cond}
  Each of the following conditions implies \eqref{eq:eps_reg:eps_cond} (even for all $\eps > 0$ and certain $\delta > 0$ depending on $\eps$).
  \begin{enumerate}
    \item[(i)]
      The family $\{\, u(\cdot, t) \mid t \in (0, \tmax)\,\}$ is uniformly integrable on $(0, 1)$.

    \item[(ii)]
      There exists $\Phi \in C^0([0, \infty); [0, \infty))$ with $\frac{\Phi(\xi)}{\xi} \to \infty$ as $\xi \to \infty$ such that $\sup_{t \in (0, \tmax)} \int_0^1 \Phi(u(\cdot, t)) < \infty$.

    \item[(iii)]
      The function $w \colon [0, 1] \times [0, \tmax) \to [0, \infty)$, $w(x, t) = \int_0^x u(\rho, t) \drho$, is uniformly continuous in space uniformly in time, i.e.,
      \begin{align}\label{eq:eps_reg_alt_cond:uniform_cont}
        \forall \eps > 0\;
        \exists \delta > 0\;
        \forall t \in (0, \tmax)\;
        \forall x_1, x_2 \in [0, 1] \text{ with } |x_1 - x_2| < \delta:
        |w(x_1, t) - w(x_2, t)| < \eps.
       \end{align}
  \end{enumerate}
\end{prop}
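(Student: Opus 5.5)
The plan is to show that (i) implies \eqref{eq:eps_reg:eps_cond}, that (ii) implies (i), and that (iii) implies \eqref{eq:eps_reg:eps_cond}. In each case we obtain the conclusion for every $\eps>0$ with a suitable $\delta=\delta(\eps)>0$. All three steps are elementary measure-theoretic observations; the main point is to use the right formulation of uniform integrability.

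For (i), we take uniform integrability on the finite measure space $(0,1)$ in the standard sense: for every $\eps>0$ there is $\delta>0$ such that
\[
\int_E u(\cdot,t)<\eps \quad\text{for all measurable } E\subset(0,1) \text{ with } |E|<\delta \text{ and all } t\in(0,\tmax).
\]
Applying this to $E=(a,b)$ with $|b-a|\le\delta/2<\delta$ gives \eqref{eq:eps_reg:eps_cond} with $\delta/2$ in place of $\delta$. If instead uniform integrability is understood via $\lim_{K\to\infty}\sup_t\int_{\{u>K\}}u=0$, we first fix $K$ with $\sup_t\int_{\{u(\cdot,t)>K\}}u(\cdot,t)<\eps/2$. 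We then estimate
\[
\int_a^b u(\cdot,t)\le K|b-a|+\int_{\{u(\cdot,t)>K\}}u(\cdot,t)<\eps
\]
whenever $|b-a|\le\delta\defs\eps/(2K+1)$.

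For (ii), we use a de la Vallée-Poussin type argument. Set $L\defs\sup_t\int_0^1\Phi(u(\cdot,t))<\infty$. Given $\eta>0$, the growth condition yields $K>0$ with $\Phi(\xi)\ge \xi L/\eta$ for all $\xi\ge K$. Hence, for all $t$,
\[
\int_{\{u>K\}}u\le \frac{\eta}{L}\int_0^1\Phi(u)\le\eta,
\]
where we use $\Phi\ge0$; if $L=0$ the set $\{u>K\}$ has measure zero and the bound is trivial. This is exactly the second form of uniform integrability, so the argument for (i) applies and gives \eqref{eq:eps_reg:eps_cond}.

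For (iii), we use that $u>0$, so $\int_a^b u(\cdot,t)=w(b,t)-w(a,t)$ for $0\le a\le b\le1$. Given $\eps>0$, let $\delta_0$ be as in \eqref{eq:eps_reg_alt_cond:uniform_cont} and set $\delta\defs\delta_0/2$. Then $|b-a|\le\delta<\delta_0$ implies $\int_a^b u(\cdot,t)<\eps$ for all $t\in(0,\tmax)$, which is \eqref{eq:eps_reg:eps_cond}. The only care needed anywhere is matching the strict inequality $<\delta$ in \eqref{eq:eps_reg_alt_cond:uniform_cont} with the non-strict $\le\delta$ in \eqref{eq:eps_reg:eps_cond}, which the halving of $\delta$ handles. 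No genuine obstacle is expected.
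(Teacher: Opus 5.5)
Your proposal is correct and follows the paper's argument. Uniform integrability controls $\int_E u(\cdot,t)$ over all small measurable sets $E$, which gives (i). Condition (ii) reduces to (i) by de la Vall\'ee Poussin, whose relevant direction you prove by hand where the paper cites it. Condition (iii) follows from $\int_a^b u(\cdot,t)=w(b,t)-w(a,t)$.

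Two minor points. In the case $L=0$ of (ii), choose $K$ so that $\Phi>0$ on $[K,\infty)$, or simply replace $L$ by $\max\{L,1\}$. Also, positivity of $u$ is not needed for the identity used in (iii).
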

\begin{proof}
  Condition~(i) requires a similar condition for intervals replaced by all measurable sets $A \subset (0, 1)$ with $|A| < \delta$ and hence directly implies \eqref{eq:eps_reg:eps_cond}.
  The de la Vall\'ee Poussin theorem asserts equivalence of (i) and (ii).
  Since $\int_a^b u(\cdot, t) = w(b, t) - w(a, t)$ for all $0 \le a \le b \le 1$ and all $t \in (0, \tmax)$, also condition~(iii) implies \eqref{eq:eps_reg:eps_cond} for all $\eps > 0$.
\end{proof}

However, as the following elementary example shows, even uniform-in-time Hölder continuity of the function $w$ defined in Proposition~\ref{prop:eps_reg_alt_cond}(iii) does not imply Proposition~\ref{prop:eps_reg_alt_cond}(i) for general functions $u$.
Accordingly, for the proof of Theorem~\ref{th:gb_pe} in Section~\ref{sec:gb_pe} below,
it is crucial that we have obtained the conclusion of Proposition~\ref{prop:eps_reg} not only when Proposition~\ref{prop:eps_reg_alt_cond}(i) but also when just \eqref{eq:eps_reg:eps_cond} holds.
\begin{prop}\label{lm:ex_w_hoelder_u_not_equiint}
  \begin{enumerate}
    \item[(i)]
    There exists $(w_k)_{k \in \N} \subseteq X \defs C^\frac12([0, 1]) \cap W^{1, 1}((0, 1))$ with
    \begin{align}\label{eq:ex_w_hoelder_u_not_equiint:hoelder}
      \|w_k\|_{C^\frac12([0, 1])} \le 1+\sqrt3, \quad
      u_k \defs w_{k x} \ge 0 \text{ a.e.} \quad \text{and} \quad
      \int_0^1 u_k = 1
    \end{align}
    being such that
    \begin{align}\label{eq:ex_w_hoelder_u_not_equiint:not_equi}
      \{\, u_k \mid k \in \N\,\} \text{ is not uniformly integrable on $(0, 1)$}.
    \end{align}

    \item[(ii)]
    There exists $w \in C^0([0, 1); X) \cap L^\infty((0, 1); X)$ satisfying \eqref{eq:eps_reg_alt_cond:uniform_cont} with $\tmax = 1$, but which is such that $\{\, w_x(\cdot, t) \mid t \in (0, 1)\,\}$ is not uniformly integrable on $(0, 1)$.
  \end{enumerate}
\end{prop}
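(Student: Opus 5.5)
The plan for (i) is to build $u_k$ by splitting the unit mass into $k$ equal portions, each concentrated on a very short interval, with the intervals spread evenly over $(0,1)$. The spreading keeps $w_k$ Hölder continuous with exponent $\frac12$, while the total measure of the support tends to zero, which destroys uniform integrability. Concretely, for $k \in \N$ and $j = 0, \dots, k-1$ put $I_{k,j} \defs (\frac jk, \frac jk + \frac1{k^2})$ and $u_k \defs k \sum_{j=0}^{k-1} \mathds 1_{I_{k,j}}$, and let $w_k(x) \defs \int_0^x u_k$. Then $w_k$ is Lipschitz, hence lies in $X$, with $u_k = w_{kx} \ge 0$ a.e.\ and $\int_0^1 u_k = k \cdot k \cdot k^{-2} = 1$. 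The set $A_k \defs \bigcup_j I_{k,j}$ has $|A_k| = \frac1k \to 0$ but $\int_{A_k} u_k = 1$ for every $k$, which gives \eqref{eq:ex_w_hoelder_u_not_equiint:not_equi}.

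The main work in (i) is the Hölder estimate, which I plan to prove by splitting according to the size of $h \defs |x-y|$, with $0 \le x < y \le 1$. Since $0 \le w_k \le 1$, the sup part of the norm is at most $1$, so it suffices to show $|w_k(y) - w_k(x)| \le \sqrt3\, h^{1/2}$.
\begin{itemize}
\item If $h \le k^{-2}$, use $u_k \le k$ to get $w_k(y) - w_k(x) \le kh = (k^2 h)^{1/2} h^{1/2} \le h^{1/2}$.
\item If $k^{-2} < h \le k^{-1}$, note that $[x,y]$ meets at most two of the intervals $I_{k,j}$, each carrying mass $\frac1k$. The mass is then at most $\frac2k$, which is at most $2 h^{1/2}$ because $k^{-1} < h^{1/2}$.
\item If $h > k^{-1}$, then $[x,y]$ meets at most $kh + 1$ intervals, so the mass is at most $h + \frac1k \le 2h \le 2h^{1/2}$.
\end{itemize}
This crude count gives the constant $2$ rather than $\sqrt3$. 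To reach $\sqrt3$ I would refine the middle regime: a portion of the interval $[x,y]$ that cuts into a second block $I_{k,j}$ contributes at most $\min\{\frac1k, k\cdot(\text{overlap})\}$. Optimising this overlap against $h$ should give the sharper constant; failing that, one can shorten the blocks (for instance length $\frac1{ck^2}$ with height $ck$). I expect this bookkeeping of the constant to be the only delicate point, and it is inessential for the qualitative claim.

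For (ii), the plan is to interpolate the sequence from (i) in time. Set $t_k \defs 1 - \frac1{k+1}$ for $k \ge 0$ and define $w(\cdot,t)$ on $[t_k, t_{k+1}]$ as the affine interpolation $(1-\lambda) w_k + \lambda w_{k+1}$, where $\lambda = (t-t_k)/(t_{k+1}-t_k)$ (with $w_0 \defs w_1$). Convex combinations preserve the three properties in \eqref{eq:ex_w_hoelder_u_not_equiint:hoelder}: the $C^{\frac12}$ bound, nonnegativity of $w_x$, and unit mass. Hence $w \in L^\infty((0,1);X)$. On each compact subinterval of $[0,1)$ only finitely many affine pieces occur, so $w \in C^0([0,1);X)$. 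The uniform $C^{\frac12}$ bound gives \eqref{eq:eps_reg_alt_cond:uniform_cont} with $\delta \defs (\eps/(1+\sqrt3))^2$. Finally, $w_x(\cdot, t_k) = u_k$ for every $k$, so by (i) the family $\{w_x(\cdot,t) \mid t \in (0,1)\}$ is not uniformly integrable.
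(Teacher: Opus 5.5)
Your construction of $u_k$ is exactly the paper's, and all qualitative claims are handled correctly: $w_k\in X$, $u_k\ge0$, unit mass, failure of uniform integrability via $|A_k|=\frac1k$, and a $k$-independent $C^{\frac12}$ bound. Part (ii) is also fine. Affine interpolation between $w_k$ and $w_{k+1}$ is a valid alternative to the paper's cutoff $\zeta(t)w_k$, and it even keeps $\int_0^1 w_x(\cdot,t)=1$.

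The gap is the explicit constant in \eqref{eq:ex_w_hoelder_u_not_equiint:hoelder}. Your three regimes give $[w_k]_{C^{1/2}}\le2$, hence only $\|w_k\|_{C^{1/2}}\le3>1+\sqrt3$, and the repair you sketch would not close this.
Refining the middle regime alone is not enough, because your third regime loses too: $h+\frac1k\le2h\le2h^{1/2}$ also has constant $2$.
Shortening the blocks to length $\frac1{ck^2}$ with height $ck$, $c>1$, goes the wrong way. A single full block then carries mass $\frac1k=\sqrt c\,(\frac1{ck^2})^{1/2}$, which forces the seminorm to be at least $\sqrt c$.
A smaller inaccuracy: the count ``at most $kh+1$ blocks'' in the third regime is false. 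Take $x$ slightly left of $\frac1{k^2}$ and $y$ slightly right of $\frac{n-1}k$ with $3\le n\le k$. The bound $\int_x^y u_k\le h+\frac1k$ still holds, because every subinterval of $[0,1]$ of length $\frac1k$ carries mass exactly $\frac1k$.

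The missing idea is to split $[x,y]$ by geometry rather than by the size of $h$. Write $I_{k,j}=(a_{k,j},b_{k,j})$. Since $w_k$ is constant on the gaps, you may assume $x\in[a_{k,j},b_{k,j}]$ and $y\in[a_{k,j+i},b_{k,j+i}]$ with $i\ge1$; for $i=0$ your first regime applies. Using $w_k(b_{k,j})=w_k(a_{k,j+1})$,
\[
  w_k(y)-w_k(x)=\big(w_k(y)-w_k(a_{k,j+i})\big)+\big(w_k(a_{k,j+i})-w_k(a_{k,j+1})\big)+\big(w_k(b_{k,j})-w_k(x)\big).
\]
Each piece is bounded by the square root of its own length:
the two outer pieces lie in single blocks of length $\ell\le k^{-2}$, so their mass is $k\ell\le\ell^{1/2}$;
the middle piece has mass $\frac{i-1}k$, equal to its length, which is at most $1$.
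The three lengths add up to at most $y-x$. Hence $\sqrt{\ell_1}+\sqrt{\ell_2}+\sqrt{\ell_3}\le\sqrt{3(\ell_1+\ell_2+\ell_3)}$ gives $[w_k]_{C^{1/2}}\le\sqrt3$, which together with $\|w_k\|_{C^0}=1$ is the stated bound.

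Alternatively, you can keep your regimes and simply not discard information. Two partial blocks with total overlap $s\le\frac2{k^2}$ force $h\ge s+\frac1k-\frac1{k^2}$, so $\frac{ks}{\sqrt h}\le\frac2{\sqrt{k+1}}$. For $h>\frac1k$ one has $\frac{h+1/k}{\sqrt h}\le\max\{\frac2{\sqrt k},1+\frac1k\}\le\frac32$ when $k\ge2$.

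Since only uniformity in $k$ is used later in the paper, this is a quantitative rather than a conceptual gap. Still, as written your argument does not prove the constant claimed in the proposition.
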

Since we are not aware of any concrete example in the literature, we present a short proof in Appendix~\ref{appendix} below.

\section{Global boundedness by Hölder regularity: proof of Theorem~\ref{th:gb_pe}}\label{sec:gb_pe}
In line with the strategy outlined above, we now aim to prove boundedness of solutions $(u, v)$ to \eqref{system} by verifying Proposition~\ref{prop:eps_reg_alt_cond}(iii), that is, that the mass accumulation function
\begin{align}\label{eq:mass_acc:def_w}
    w \colon [0, 1] \times [0, \tmax) \to [0, \infty), \quad w(x, t) = \int_0^x u(\rho, t) \drho,
\end{align}
whose higher dimensional versions have been introduced for the study of chemotaxis systems in \cite{JagerLuckhausExplosionsSolutionsSystem1992},
is uniformly continuous in $x$ uniformly in $t$.
To that end, we first recall some of its basic properties, which are yet independent of the form of the second equation in \eqref{system}.
\begin{lemma}\label{lm:mass_acc}
  Let $(\tau, k) \in \{(0, 0), (0, 1), (1, 1)\}$, let $m \in \R$, suppose \eqref{initial_data}
  and denote the solution of \eqref{system} given by Proposition~\ref{prop:eps_reg} by $(u, v)$.
  Then the function $w$ defined in \eqref{eq:mass_acc:def_w}
  belongs to $C^0([0, \tmax); C^1([0, 1])) \cap C^{\infty}((0, 1) \times (0, \tmax))$ and solves
  \begin{align}\label{eq:mass:acc:w_sol}
    \begin{cases}
      w_t = (w_x + 1)^{m-1} w_{xx} - w_x (w_x+1)^m v_x & \text{in $(0, 1) \times (0, \tmax)$}, \\
      w(0, \cdot) = 0,\; w(1, \cdot) = M               & \text{in $(0, \tmax)$}, \\
      w(\cdot, 0) = w_0 \defs \int_0^{{\text{\tiny $\bullet$}}} u_0 & \text{in $(0, 1)$}.
    \end{cases}
  \end{align}
\end{lemma}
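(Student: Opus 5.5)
The plan is to derive everything directly from the regularity class \eqref{eq:eps_reg:reg} of $(u,v)$ and from the first equation in \eqref{system}, integrated in space.

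\emph{Regularity.} Since $u \in C^0([0,\tmax); W^{1,\theta}((0,1)))$ for some $\theta > 1$ and $W^{1,\theta}((0,1)) \embed C^0([0,1])$, the map $t \mapsto u(\cdot,t)$ is continuous into $C^0([0,1])$. The map $\varphi \mapsto \int_0^{\text{\tiny $\bullet$}} \varphi$ is bounded and linear from $C^0([0,1])$ into $C^1([0,1])$. Hence $w = \int_0^{\text{\tiny $\bullet$}} u \in C^0([0,\tmax); C^1([0,1]))$ with $w_x = u$. Smoothness in $(0,1)\times(0,\tmax)$ follows from $u \in C^\infty([0,1]\times(0,\tmax))$, since $x$-derivatives of $w$ are derivatives of $u$. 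For time derivatives we differentiate under the integral sign, which is allowed because $u_t$ and its derivatives are continuous on $[0,x]\times[t_1,t_2]$ for any compact subinterval $[t_1,t_2] \subset (0,\tmax)$.

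\emph{The equation.} For $t \in (0,\tmax)$ and $x \in (0,1)$, integrating the first equation of \eqref{system} over $(0,x)$ gives
\begin{align*}
  w_t(x,t) = \int_0^x u_t(\rho,t) \drho = \big[(u+1)^{m-1}u_x - u(u+1)^m v_x\big](x,t) - \big[(u+1)^{m-1}u_x - u(u+1)^m v_x\big](0,t).
\end{align*}
The boundary term at $0$ vanishes by the Neumann conditions $u_x = v_x = 0$ on $\{0\}\times(0,\tmax)$, which hold classically because $u$ and $v$ are smooth up to the boundary for positive times. Substituting $u = w_x$ and $u_x = w_{xx}$ then yields $w_t = (w_x+1)^{m-1} w_{xx} - w_x(w_x+1)^m v_x$. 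This holds irrespective of the choice of $(\tau,k)$, since the second equation is not used here.

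\emph{Boundary and initial values.} We have $w(0,t) = 0$ trivially and $w(1,t) = \int_0^1 u(\cdot,t) = M$ by mass conservation \eqref{eq:u_mass}. The initial condition $w(\cdot,0) = \int_0^{\text{\tiny $\bullet$}} u_0$ follows from $u(\cdot,0) = u_0$ together with the continuity of $w$ at $t=0$ in $C^1([0,1])$ established above.

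The only slightly delicate point is justifying the interchange of $\partial_t$ and $\int_0^x$ and the use of the boundary condition at $x=0$. Both are handled by smoothness of $u$ up to $x=0$ for $t>0$. No uniformity as $t \searrow 0$ is needed, because the equation is only claimed for $t>0$.
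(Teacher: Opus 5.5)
Your proof is correct and follows essentially the same route as the paper: the regularity is read off from \eqref{eq:eps_reg:reg}, the equation comes from integrating the first equation of \eqref{system} over $(0,x)$ and substituting $u = w_x$, and the boundary values come from the definition of $w$ and mass conservation \eqref{eq:u_mass}. You are slightly more explicit than the paper in noting that the flux vanishes at $x=0$ because of the Neumann conditions, which the paper uses tacitly.
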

\begin{proof}
  The claimed regularity follows from \eqref{eq:eps_reg:reg}.
  Inserting the first equation in \eqref{system} gives
  \begin{align*}
        w_t
    &=  \int_0^x u_t(\rho, t) \drho
    =   \int_0^x \big((u+1)^{m-1} u_x - u(u+1)^m v_x\big)_x \drho \\
    &=  (u+1)^{m-1} u_x - u(u+1)^m v_x
    =   (w_x + 1)^{m-1} w_{xx} - w_x (w_x+1)^m v_x
  \end{align*}
  in $(0, 1) \times (0, \tmax)$.
  Moreover, the statement $w(1, t) = M$ for all $t \in (0, \tmax)$ is equivalent to \eqref{eq:u_mass},
  and the remaining identities on the parabolic boundary directly follow from the definition of $w$.
\end{proof}

Multi-dimensional counterparts of \eqref{eq:mass:acc:w_sol} have been used in the analysis of several chemotaxis systems, both to show blow-up (see, for instance, \cite{JagerLuckhausExplosionsSolutionsSystem1992} for the first such application and the survey \cite{LankeitWinklerFacingLowRegularity2019} for more examples) and, by applying comparison theorems to variants of \eqref{eq:mass:acc:w_sol}, also boundedness of $u$ (cf., e.g., \cite{WinklerHowUnstableSpatial2018}, \cite{MaoLiNote8pProblem2024} and Section~\ref{sec:gb_jl} below).

Here, we follow a different approach, namely, that well-known Hölder regularity theory (cf.\ \cite{DiBenedettoDegenerateParabolicEquations1993}) becomes applicable if the inhomogeneity $v_x$ in \eqref{eq:mass:acc:w_sol} is bounded.
In the fully parabolic setting, semigroup estimates and \eqref{eq:u_mass} show boundedness of $v_x$ in $L^\infty((0, \tmax); L^p((0, 1)))$ for all $p \in [1, \infty)$ -- which turns out to be barely insufficient for the subsequent analysis in Lemma~\ref{lm:w_hoelder} below.
If $\tau = 0$, however, the desired bound can be proven in a straightforward manner.
This is stated in the following lemma, which already also addresses a (weaker) bound for $\tau = 1$
to be used in the proof of Lemma~\ref{lm:ddt_psi_u} in Section~\ref{sec:gb_pp}.
\begin{lemma}\label{lm:vx_linfty}
  Let $(\tau,k)\in \{(0,0),(0,1),(1,1)\}$  and $m \in \R$,
  and suppose \eqref{initial_data}.
  Then the solution $(u, v)$ of \eqref{system} given by Proposition~\ref{prop:eps_reg} fulfils:
  \begin{itemize}
    \item[(i)] If $(\tau,k)\in\{(0,0),(0,1)\}$, then $|v_x| \le 2M = 2 \int_0^1 u_0$ in $(0, 1) \times (0, \tmax)$.
    \item[(ii)] There exists $C>0$ such that $|v_x|^2 \le C \int_0^1 (\tau v_t)^2 + C$ in $(0, 1) \times (0, \tmax)$.
  \end{itemize}
\end{lemma}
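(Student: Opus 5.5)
For part (i), I will use that in the elliptic cases the second equation reads $v_{xx} = kv + (1-k)M - u$, so $v_x$ is controlled by integrating in space. The Neumann condition gives $v_x(0,t) = 0$, hence
\[
v_x(x,t) = \int_0^x \big(kv + (1-k)M - u\big)(\rho,t)\drho .
\]
Since $u,v\ge 0$, and since $\intom v = M$ when $k=1$ (integrate the elliptic equation) while $v$ does not appear when $k=0$, this yields
\[
|v_x| \le \int_0^1 \big(kv + (1-k)M\big) + \int_0^1 u = M + M = 2M .
\]
For $k=0$ one could even note $|M x - w(x,t)| \le M$, but the crude bound suffices.

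For part (ii), the case $\tau = 0$ is immediate from (i), since $|v_x|^2 \le 4M^2$. For $(\tau,k) = (1,1)$ I again write $v_{xx} = v_t + v - u$ and use $v_x(0,t)=0$:
\[
|v_x(x,t)| \le \int_0^1 |v_t| + \int_0^1 v + \int_0^1 u .
\]
The last two terms are bounded by $\max\{M, \intom v_0\} + M$ thanks to \eqref{eq:u_mass}. The first term is at most $(\intom v_t^2)^{1/2}$ by Cauchy--Schwarz. Squaring and applying $(a+b)^2 \le 2a^2 + 2b^2$ then gives $|v_x|^2 \le 2\intom v_t^2 + C$.

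I do not expect a real obstacle. The only point needing care is that everything is evaluated at positive times, where the solution is classical by \eqref{eq:eps_reg:reg}, so the pointwise identity $v_x(x,t) = \int_0^x v_{xx}(\rho,t)\drho$ and the mass bounds from Lemma~\ref{lm:local_ex} are legitimate.
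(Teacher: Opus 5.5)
Your proof is correct and is essentially the paper's argument: you integrate the second equation from $0$ to $x$ using $v_x(0,t)=0$, bound the resulting integrals with nonnegativity and the mass relations in \eqref{eq:u_mass}, and for (ii) you square and apply Cauchy--Schwarz to $\int_0^1 |\tau v_t|$. The only difference is cosmetic: the paper runs all three cases through one chain of estimates, carrying an extra term $k\|\tau v_0\|_{L^1((0,1))}$, whereas you split off $\tau=0$ and read (ii) there directly from (i).
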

\begin{proof}
  Integrating the second equation in \eqref{system} and recalling \eqref{eq:u_mass} gives
  \begin{align*}
        |v_x(x, t)|
    &=   \left| \int_0^x \big( \tau v_t(\rho,t) - u(\rho, t) + kv(\rho, t) + (1-k)M \big) \drho \right|\\
    &\le\int_0^1| \tau  v_t|+\int_0^1 \big( u + kv + (1-k)M \big)
    \le \int_0^1 |\tau v_t|+2M +k\norm[L^1((0,1))]{\tau v_0}
  \end{align*}
  for all $(x, t) \in (0, 1) \times (0, \tmax)$, which implies (i).
  Moreover, we can continue to estimate
  \begin{align*}
        |v_x|^2
    &=  \left(
    \int_0^1 |\tau v_t|+2M +k\norm[L^1((0,1))]{\tau v_0}
    \right)^2 \\
    &\le   3 \int_0^1 (\tau v_t)^2 + 12M^2+3k^2\norm[L^1((0,1))]{\tau v_0}^2
    \qquad \text{ in $(0, 1) \times (0, \tmax)$}.
  \end{align*}
  Therefore, (ii) follows upon an evident choice of $C$.
\end{proof}

With this preparation at hand, we are able to apply Hölder regularity theory to the problem \eqref{eq:mass:acc:w_sol} and subsequently prove Theorem~\ref{th:gb_pe}.
\begin{lemma}\label{lm:w_hoelder}
  Let $(\tau,k)\in \{ (0,0),(0, 1)\}$ and $m > 0$,
  suppose \eqref{initial_data} and let $(u, v)$ be the solution of \eqref{system} given by Proposition~\ref{prop:eps_reg}.
  Then there are $\alpha \in (0, 1)$ and $C > 0$ such that
  \begin{align}\label{eq:w_hoelder:statement}
    \|w(\cdot, t)\|_{C^\alpha([0, 1])} \le C
    \qquad \text{for all $t \in [\min\{1, \tfrac{\tmax}{2}\}, \tmax)$}.
  \end{align}
\end{lemma}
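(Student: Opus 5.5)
We plan to view \eqref{eq:mass:acc:w_sol} as a quasilinear parabolic equation in divergence form of $p$-Laplace (degenerate) type, and then invoke DiBenedetto's interior and boundary Hölder theory.

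\textbf{Rewriting the equation.} Since $m>0$, set $\Phi(\xi) \defs \frac{(\xi+1)^m - 1}{m}$ for $\xi \ge 0$. Then $(w_x+1)^{m-1} w_{xx} = (\Phi(w_x))_x$. Likewise, with $\Psi(\xi) \defs \int_0^\xi (\sigma+1)^m \dsigma$, the drift term satisfies $w_x(w_x+1)^m v_x = \Psi'(w_x)\,w_x\,v_x$. It is simplest to keep the drift as a lower-order term $B \defs -w_x(w_x+1)^m v_x$ and write
\begin{align*}
  w_t = \big(\mathbf a(w_x)\big)_x + B, \qquad \mathbf a(\xi) \defs \Phi(\xi).
\end{align*}

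\textbf{Structure conditions.} We check the structure conditions of \cite{DiBenedettoDegenerateParabolicEquations1993} with $p \defs m+1 > 1$. For $\xi \ge 0$ we have $\mathbf a(\xi)\xi \ge c|\xi|^{m+1} - C$ and $|\mathbf a(\xi)| \le C|\xi|^m + C$. The key point is the lower-order term: by Lemma~\ref{lm:vx_linfty}(i), $|v_x| \le 2M$, so
\begin{align*}
  |B| \le 2M\,w_x (w_x+1)^m \le C(|w_x|^{m+1} + 1).
\end{align*}
This is exactly the critical growth $|Dw|^{p}$ allowed in DiBenedetto's framework (conditions (A$_1$)–(A$_3$) with $|b| \le C|Dw|^p + \varphi$). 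The requirement there is that $w$ be bounded, and here $0 \le w \le M$ automatically, since $w$ is nondecreasing in $x$ with $w(1,\cdot)=M$. The criticality of $S$ relative to $D$ is what makes the exponent of $w_x$ in $B$ equal to $p=m+1$, and boundedness of $v_x$ is what we need in order not to exceed it. Since $w_x = u \ge 0$, the fact that $\mathbf a$ is only defined on $[0,\infty)$ is harmless; we may extend it oddly or by $\xi \mapsto \operatorname{sgn}(\xi)\Phi(|\xi|)$ without affecting the solution. The equation is not degenerate in space (no $x^{n-1}$ weights), since we are in 1D.

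\textbf{Applying the Hölder theory.} Invoke the interior and boundary Hölder estimates for bounded weak solutions (DiBenedetto, Chapters III and IV, including boundary regularity for Dirichlet data). The boundary data $w(0,t)=0$ and $w(1,t)=M$ are constant in time, hence smooth. The estimate yields a modulus $|w(x_1,t)-w(x_2,t)| \le C|x_1-x_2|^\alpha$ for all $t \in [t_0, \tmax)$, with $t_0 = \min\{1, \tmax/2\}$. The constants depend only on $m$, $M$, and $\|w\|_\infty \le M$, and we apply the estimate on cylinders of fixed time length $t_0/2$ ending at $t$; this handles the restriction away from $t=0$, where the initial datum need not be Hölder in any uniform sense. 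Uniformity in $t$ follows because all structural constants are time-independent. Combined with $\|w\|_\infty \le M$, this gives \eqref{eq:w_hoelder:statement}.

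\textbf{Main obstacle.} The main obstacle is verifying that the critical-growth lower-order term is admissible in the cited theory. DiBenedetto allows $|B| \le C|Dw|^p$ for bounded solutions, but one must make sure the constants depend on $\|w\|_\infty$ only, and that the degenerate case $p>2$ ($m>1$) and the singular case $1<p<2$ ($0<m<1$) are both covered. If this fails, a fallback is the substitution $\tilde w = e^{\lambda w}$-type transform to remove the $|Dw|^p$ term, which is possible precisely because $w$ is bounded.
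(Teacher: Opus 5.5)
Your proposal is correct and follows essentially the same route as the paper. You rewrite the $w$-equation in divergence form with $p=m+1$, use $|v_x|\le 2M$ from Lemma~\ref{lm:vx_linfty}(i) to get the critical-growth bound $|b|\le c(|w_x|^{m+1}+1)$ required in DiBenedetto's (A$_1$)--(A$_3$), and apply his Hölder theory on time cylinders of fixed length bounded away from $t=0$, using that $w$ is constant on $\{0,1\}$. Your normalization $\Phi(\xi)=\frac{(\xi+1)^m-1}{m}$ with odd extension differs only cosmetically from the paper's $a(\xi)=\frac1m(|\xi|+1)^{m-1}(\xi+1)$. Your list of cases omits the borderline $m=1$ (i.e.\ $p=2$), which is neither degenerate nor singular and is handled by the non-degenerate theory; the paper uses Porzio--Vespri for it.
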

\begin{proof}
  Aiming to apply the Hölder regularity results from  \cite{DiBenedettoDegenerateParabolicEquations1993},
  we first note that due to $w_x \ge 0$ the PDE in \eqref{eq:mass:acc:w_sol} can be rewritten as
  \begin{align}\label{eq:w_hoelder:w_eq}
    w_t = \big(a(x, t, w, w_x)\big)_x + b(x, t, w, w_x)
    \qquad \text{in $(0, 1) \times (0, \tmax)$},
  \end{align}
  where
  \[
    a(x, t, z, \xi) \defs \frac1m (|\xi|+1)^{m-1} (\xi+1)
    \quad \text{and} \quad
    b(x, t, z, \xi) \defs -|\xi|(|\xi|+1)^m v_x(x, t)
  \]
  for $(x, t, z, \xi) \in [0, 1] \times [0, \tmax) \times \R \times \R$.

  Next, Young's inequality, the elementary estimate $(s_1 + s_2)^p \le 2^{(p-1)_+}(s_1^p + s_2^p)$ holding for all $s_1, s_2, p \ge 0$, and Lemma~\ref{lm:vx_linfty} allow us to estimate
  \begin{align*}
          a(x, t, z, \xi) \xi
    &\ge  \frac1m (|\xi|+1)^{m+1} - \frac1m (|\xi|+1)^m
    \ge   \frac1{2m} (|\xi|+1)^{m+1} - c_1
    \ge   \frac1{2m} |\xi|^{m+1} - c_1 \\
          |a(x, t, z, \xi)|
    &\le  c_2 |\xi|^m + c_2 \\
          |b(x, t, z, \xi)|
    &\le  2M (|\xi|+1)^{m+1}
    \le   c_3 |\xi|^{m+1} + c_3
  \end{align*}
  for all $(x, t, z, \xi) \in [0, 1] \times [0, \tmax) \times \R \times \R$ and certain $c_1, c_2, c_3$ only depending on $m$ and $M$.
  Thus, the structure conditions (A$_1$)--(A$_3$) in \cite[p.~16]{DiBenedettoDegenerateParabolicEquations1993} (and \cite[p.~148]{PorzioVespriHolderEstimatesLocal1993}) are fulfilled for $p \defs m+1 > 1$ and constant $\varphi_i$.
  The latter then trivially fulfil (A$_4)$, (A$_5$) and (A$_5$-i) in \cite[p.~17]{DiBenedettoDegenerateParabolicEquations1993} (and the corresponding conditions in \cite{PorzioVespriHolderEstimatesLocal1993}).

  By \eqref{eq:w_hoelder:w_eq} and as $w$ is constant on the spatial boundary,  \cite[Chapter~II and Chapter~III]{DiBenedettoDegenerateParabolicEquations1993} (respectively, \cite{PorzioVespriHolderEstimatesLocal1993} for the non-degenerate, non-singular case $m = 1$) yields $C > 0$ and $\alpha \in (0, 1)$ such that setting $T \defs  \min\{1, \frac{\tmax}{2}\}$, we have
  \begin{align*}
    \|w\|_{C^{\alpha, \alpha/2}([0, 1] \times [t_0, t_0 + T])} \le C
    \qquad \text{for all $t_0 \in [T, \tmax - T]$},
  \end{align*}
  which entails \eqref{eq:w_hoelder:statement}.
\end{proof}

\begin{proof}[Proof of Theorem~\ref{th:gb_pe}]
  Lemma~\ref{lm:w_hoelder} combined with the inclusion $w \in C^0([0, 1] \times [0, \min\{1, \tfrac{\tmax}{2}\}])$ shows that \eqref{eq:eps_reg_alt_cond:uniform_cont} is fulfilled. Hence, Proposition~\ref{prop:eps_reg_alt_cond}(iii) and Proposition~\ref{prop:eps_reg} yield global boundedness of $(u, v)$.
\end{proof}

\section{Global boundedness by comparison and monotonicity: proof of Theorem~\ref{th:gb_jl}}\label{sec:gb_jl}
In the parabolic--elliptic setting, but without any restriction on $m$, the problem \eqref{eq:mass:acc:w_sol} admits a constant-in-time Hölder continuous supersolution vanishing at the spatial origin. By means of a comparison theorem, we thus obtain the following.
\begin{lemma}\label{lm:w_le_x_alpha}
  Let $(\tau ,k)\in \{(0,0), (0, 1)\}$ and $m \in \R$, suppose \eqref{initial_data} and let $(u, v)$ be the solution of \eqref{system} given by Proposition~\ref{prop:eps_reg}.
  Then there are $C > 0$ and $\alpha \in (0, 1)$ such that the function $w$ defined in \eqref{eq:mass_acc:def_w} fulfils
  \begin{align*}
    w(x, t) \le C x^\alpha
    \qquad \text{for all $x \in (0, 1)$ and all $t \in (0, \tmax)$}.
  \end{align*}
\end{lemma}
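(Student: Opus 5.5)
We compare $w$ with $\ol w(x) \defs C x^\alpha$ on $[0,1]\times[0,T]$ for arbitrary $T<\tmax$. First we check that $\ol w$ is a stationary supersolution of the problem \eqref{eq:mass:acc:w_sol}; that is, we aim to show
\begin{align*}
  \mc P[\ol w] \defs \ol w_t - (\ol w_x+1)^{m-1}\ol w_{xx} + \ol w_x(\ol w_x+1)^m v_x \ge 0 \qquad \text{in } (0,1)\times(0,\tmax).
\end{align*}
Since $\tau=0$, Lemma~\ref{lm:vx_linfty}(i) gives $|v_x|\le 2M$. Moreover $\ol w_x = C\alpha x^{\alpha-1}>0$ and $\ol w_{xx} = -C\alpha(1-\alpha)x^{\alpha-2}<0$. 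Hence it suffices to have
\begin{align*}
  (\ol w_x+1)^{m-1}\,C\alpha(1-\alpha)x^{\alpha-2} \ge 2M\, \ol w_x(\ol w_x+1)^m,
\end{align*}
that is,
\begin{align*}
  (1-\alpha)x^{-1} \ge 2M\,(C\alpha x^{\alpha-1}+1) \qquad \text{for all } x\in(0,1).
\end{align*}
The key observation is that the powers of $\ol w_x+1$ cancel up to a single factor, so this works for every $m\in\R$; this is exactly the criticality. Multiplying by $x$, the requirement becomes $1-\alpha \ge 2M(C\alpha x^{\alpha}+x)$. The term $2Mx$ is bad for $x$ near $1$ if $M$ is large, so the comparison cannot be done on the full interval.

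To handle this, we work only on $(0,x_0)$ with $x_0 \defs \min\{1,\frac1{8M}\}$. On the lateral boundary $x=x_0$ we use $w\le M$ and require $Cx_0^\alpha\ge M$. Choose $C\defs M x_0^{-\alpha}$. Then on $(0,x_0)$ we have $C\alpha x^\alpha\le \alpha M$ and $2Mx\le \frac14$, so the inequality holds provided $1-\alpha\ge 2M^2\alpha+\frac14$, which is true for $\alpha$ small, e.g.\ $\alpha\le \frac{1}{2(1+2M^2)}$. At $x=0$ both functions vanish.

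For the initial comparison we need $w_0\le Cx^\alpha$ on $[0,x_0]$. Since $u_0\in W^{1,\theta}\subset L^\infty$, we have $w_0(x)\le \|u_0\|_\infty x\le \|u_0\|_\infty x^\alpha$, so we enlarge $C$ to $\max\{Mx_0^{-\alpha},\|u_0\|_{L^\infty}\}$. Enlarging $C$ only helps the boundary and initial conditions, but the interior condition involves $C\alpha x^\alpha$, which is bounded by $C\alpha x_0^\alpha$. We therefore pick $\alpha$ after $C$'s dependence is clear: choose $\alpha$ so that $2MC\alpha x_0^\alpha\le\frac14$. The circularity, with $C$ depending on $\alpha$ through $x_0^{-\alpha}$, is harmless since $x_0^{-\alpha}\le x_0^{-1}$. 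On $[x_0,1]$ the bound is trivial: $w\le M\le Cx_0^\alpha\le Cx^\alpha$.

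The main obstacle is justifying the comparison principle. The operator is quasilinear, with $w_x$-dependent coefficients, and nondegenerate since $w_x+1\ge1$. The supersolution $\ol w$ is singular in its derivative at $x=0$. We would apply the comparison on $[\eta,x_0]\times[0,T]$ with small $\eta>0$, using continuity of $w$ at $x=0$, or more simply argue directly at a first touching point. Consider $z\defs w-\ol w-\epsilon e^{t}$. At a positive maximum in the parabolic interior we get $w_x=\ol w_x$ and $w_{xx}\le\ol w_{xx}$. Because the first-order and coefficient terms coincide there, the equation yields $\epsilon e^t \le (\ol w_x+1)^{m-1}(w_{xx}-\ol w_{xx})-\mc P[\ol w]\le 0$, which is a contradiction. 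Near $x=0$ no touching occurs because $w\le \|u(\cdot,t)\|_\infty x$ while $\ol w\gg x$; this requires only local boundedness on $[0,T]$, and that holds for $T<\tmax$.
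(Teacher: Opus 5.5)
Your proof is correct and follows essentially the same route as the paper: a stationary supersolution $Cx^\alpha$ on $(0,x_0)$ with $x_0\sim\frac1{8M}$, the bound $|v_x|\le 2M$, the cancellation of the powers of $\ol w_x+1$ that makes the argument independent of $m$, the lateral bound $w\le M$ at $x=x_0$, and $\alpha$ chosen small depending on $M$ and the initial-data constant. The differences are cosmetic: you control $w_0$ via $\|u_0\|_{L^\infty}$ instead of $\sup x^{-1/2}w_0$, and you prove the comparison by hand via a first-touching argument for $w-\ol w-\epsilon\ure^t$ rather than citing a comparison theorem; that argument works provided you take the first time $z$ reaches $0$ rather than a ``positive maximum'', and keep $\alpha\le\frac12$ in your final choice.
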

\begin{proof}
  We set
  \begin{align}\label{eq:w_le_x_alpha:x0_c0}
    x_0 \defs \frac{1}{8\max\{M, 1\}} \in (0, 1)
    \quad \text{and} \quad
    c_1 \defs \sup_{x \in (0, x_0)} x^{-\frac12} w_0(x)
  \end{align}
  as well as
  \begin{align}\label{eq:w_le_x_alpha:c2_alpha}
    \alpha \defs \frac{1}{8\max\{M^2, Mc_1, 1\}} \in \left(0, \frac12\right)
    \quad \text{and} \quad
    c_2 \defs \max\{x_0^{-\alpha} M, c_1\}.
  \end{align}
  Then
  \begin{align*}
    \ol w \colon [0, x_0] \times [0, \tmax) \to \R, \quad
    \ol w(x, t) = c_2 x^\alpha,
  \end{align*}
  fulfils
  \begin{align}\label{eq:w_le_x_alpha:parab_bdry}
    \ol w(0, t)   = 0 = w(0, t), \quad
    \ol w(x_0, t) = c_2 x_0^\alpha \ge M \ge w(x_0, t) \quad \text{and} \quad
    \ol w(x, 0)   = c_2 x^\alpha \ge c_1 x^\frac12 \ge w_0(x)
  \end{align}
  for all $(x, t) \in [0, x_0] \times [0, \tmax)$.

  Next, for $\varphi \in C^{2, 1}((0, x_0) \times (0, \tmax))$, we set
  \begin{align*}
    \mc P \varphi \defs \varphi_t - (\varphi_x + 1)^{m-1} \varphi_{xx} - 2M \varphi_x (\varphi_x + 1)^m.
  \end{align*}
  We compute $\ol w_x(x, t) = c_2 \alpha x^{\alpha-1} > 0$ and $\ol w_{xx}(x, t) = -c_2\alpha(1-\alpha) x^{\alpha-2}$ for all $(x, t) \in (0, 1) \times (0, \tmax)$,
  so that
  \begin{align*}
          \mc P \ol w
    &=    0 + (\ol w_x + 1)^{m-1} \left( -\ol w_{xx} - 2M \ol w_x (\ol w_x + 1) \right) \\
    &\ge (\ol w_x + 1)^{m-1} c_2 \alpha x^{\alpha-2} \left( \frac12 - 2M c_2 \alpha x^\alpha - 2M x \right)
  \end{align*}
  in $(0, 1) \times (0, \tmax)$.
  Recalling \eqref{eq:w_le_x_alpha:x0_c0} and \eqref{eq:w_le_x_alpha:c2_alpha}, we have herein
  \begin{align*}
          2M c_2 \alpha x^\alpha
    &\le  2M \alpha \max\{M, c_1\}
    \le   \frac14
    \quad \text{and} \quad
          2M x
    \le   2M x_0
    \le   \frac14
  \end{align*}
  for all $x \in (0, x_0)$ and hence $\mc P\ol w \ge 0$ in $(0, 1) \times (0, \tmax)$.

  On the other hand, \eqref{eq:mass:acc:w_sol} and Lemma~\ref{lm:vx_linfty}(i) assert $\mc Pw \le 0$ in $(0, x_0) \times (0, \tmax)$.
  As moreover $w \le \ol w$ on the parabolic boundary by \eqref{eq:w_le_x_alpha:parab_bdry},
  the comparison principle \cite[Theorem A.1]{bellomo2017finite} yields $w \le \ol w$ in $(0, x_0) \times (0, \tmax)$.
  Since $w \le M x^\alpha x^{-\alpha} \le c_2 x^\alpha$ in $[x_0, 1) \times (0, \tmax)$, the statement follows upon setting $C \defs c_2$.
\end{proof}

To prepare proving regularity properties of $w$ away from the spatial origin, we next note that monotonicity of $u_0$ propagates in the J\"ager--Luckhaus setting.
\begin{lemma}\label{lm:u_monotone}
  Let $(\tau,k) = (0,0)$ and $m \in \R$.
  Suppose \eqref{initial_data} and that $u_0$ is nonincreasing.
  Then the first component of the solution $(u, v)$ of \eqref{system} given by Proposition~\ref{prop:eps_reg} is nonincreasing in space throughout $(0, \tmax)$.
\end{lemma}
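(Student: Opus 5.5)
We show that $z \defs u_x$ satisfies $z \le 0$ throughout by a maximum principle argument for the equation solved by $z$. For $(\tau,k)=(0,0)$ the second equation reads $v_{xx} = M - u$, so $v_x(x,t) = Mx - w(x,t)$, where $w$ is the mass accumulation function from \eqref{eq:mass_acc:def_w}.

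Expanding the first equation gives
\begin{align*}
  u_t = D(u) u_{xx} + D'(u) u_x^2 - S'(u) u_x v_x - S(u)(M-u).
\end{align*}
Differentiating in $x$ (legitimate in $(0,1)\times(0,\tmax)$ by the smoothness in \eqref{eq:eps_reg:reg}) and using $v_{xx} = M-u$ once more, we obtain
\begin{align*}
  z_t = D(u) z_{xx} + \big(D'(u) u_{xx} + 2D'(u) z - S'(u) v_x\big) z_x + c(x,t)\, z,
\end{align*}
where the zeroth-order coefficient is
\begin{align*}
  c = D''(u) z^2 - S''(u) z v_x - S'(u)(M-u) - S'(u)(M-u) + S(u),
\end{align*}
the last three terms coming from $(S(u)(u-M))_x = S'(u)(u-M)z + S(u)z$ and from $-S'(u)u_x v_{xx}$. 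The essential point is that every term on the right carries a factor $z$ or $z_x$. Hence $z$ solves a linear equation $z_t = A z_{xx} + B z_x + c z$ with coefficients $A > 0$, $B$ and $c$ that are bounded on $[0,1]\times[t_0,T]$ for every $0<t_0<T<\tmax$, since $u$ is smooth and positive there.

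The boundary values are $z = 0$ on $\{0,1\}$ by the Neumann condition. If $u_0$ is merely in $W^{1,\theta}$, the initial value $z(\cdot,0)=u_{0x}\le 0$ holds only in a weak sense, so we argue as follows. Put $\tilde z \defs \ure^{-\lambda t} z$ with $\lambda > \sup c$ on $[0,1]\times[t_0,T]$. At a positive interior maximum of $\tilde z$ this choice forces $\tilde z_t < 0$, so the maximum of $\tilde z_+$ is attained on the parabolic boundary. To start at $t=0$ we use an approximation. We approximate $u_0$ by smooth nonincreasing functions with zero derivative at the endpoints (for instance by mollifying an even reflection), solve with these data, and pass to the limit using the uniqueness and continuous dependence in $W^{1,\theta}$ provided by the local theory of Lemma~\ref{lm:local_ex}. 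Pointwise monotonicity is preserved under this convergence, since $W^{1,\theta}\embed C^0$.

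Alternatively, to avoid differentiating the equation, we may compare $u(x,t)$ with the translate $u(x+h,t)$ directly. The translate solves the same equation up to the term $v_{xx}$, which for $(\tau,k)=(0,0)$ is a local function of $u$, and the nonlocal part enters only through $v_x = Mx - w$.

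We expect the main obstacle to be rigorously justifying the behaviour at $t=0$, that is, the approximation and continuity step for $W^{1,\theta}$ data, together with checking that the coefficients are bounded up to the lateral boundary. The sign structure itself causes no difficulty, because the problematic nonlocal term $v_x$ appears only as a coefficient multiplying $z_x$ or $z$.
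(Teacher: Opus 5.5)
Your proposal is correct and follows essentially the paper's route: use $v_{xx}=M-u$ to differentiate the first equation, observe that $u_x$ solves a linear parabolic equation with bounded coefficients and vanishing lateral values, and apply the maximum principle with an exponential weight (the paper works with $u_x-\eps\ure^{\lambda t}$ and a first-touching-point contradiction rather than $\ure^{-\lambda t}u_x$). Your approximation of $u_0$ by smooth, nonincreasing, compatible data actually justifies the step at $t=0$, which the paper glosses over by bounding the zeroth-order coefficient on $[0,1]\times[0,T]$; two small caveats are that the even-reflection construction needs a symmetric, radially nonincreasing mollifier, and that the $z_x$-coefficient should read $3D'(u)z-S'(u)v_x$, because $D'(u)u_xu_{xx}=D'(u)z\,z_x$ and not $D'(u)u_{xx}z_x$ (a harmless slip).
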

\begin{proof}
  The idea of proof follows from \cite[Lemma~2.2]{WinklerCriticalBlowupExponent2018}, some adaptations for quasilinear systems have been made in \cite[Lemma~5.1]{BlackEtAlRelaxedParameterConditions2021}.
  Since $v_{xx} = M-u$ by the second equation in \eqref{system}, the first equation in \eqref{system} is equivalent to $u_t = (D(u) u_x)_x - S(u)(M-u) - S'(u) u_x v_x$,
  which upon differentiating with respect to $x$ and recalling the assumptions on the parabolic boundary results in
  \begin{align*}
    \begin{cases}
      (u_x)_t = D(u)(u_x)_{xx} + a(x,t) (u_x)_x+ b(x,t) u_x & \text{in $(0, 1) \times (0, \tmax)$}, \\
      u_x = 0                                               & \text{on $\{0, 1\} \times (0, \tmax)$}, \\
      u_{0x}(\cdot,0) \le 0                                 & \text{in $(0, 1)$},
    \end{cases}
  \end{align*}
  where
  \begin{align*}
            a
    &\defs  2D'(u)u_x
            - S'(u)v_x
    \quad \text{and} \quad
            b
     \defs  D''(u) u_x^2
            - 2S'(u)(M-u)
            + S(u)
            - S''(u) u_x v_x
  \end{align*}
  in $(0,1)\times(0,\tmax)$.
  The claim then follows by the usual comparison principle argument:
  For $T \in (0, \tmax)$ and $\eps > 0$, we consider $z(x,t)=u_x(x,t)-\eps \ure^{\lambda t}$ for $x \in [0, 1]$, $t \in [0, T]$ and $\lambda = 2 \|b\|_{C^0([0, 1] \times [0, T])}$,
  and assume for the sake of contradiction that $z$ is not nonpositive everywhere.
  Since $z$ is negative on the parabolic boundary, this would imply that there are $x_0 \in (0, 1)$ and $t_0 \in (0, T)$ with $z(x_0, t_0) = 0$ but $z \le 0$ in $[0, 1] \times [0, t_0]$.
  At this point, $z_t \ge 0$, $z_{xx} \le 0$ and $z_x = 0$, whence
  \begin{align*}
    0 \le D(u(x_0, t_0)) \cdot 0 + a(x_0, t_0) \cdot 0 + b(x_0, t_0) \cdot(0 + \eps \ure^{\lambda t_0}) - \lambda \eps \ure^{\lambda t_0} < 0,
  \end{align*}
  a contradiction.
\end{proof}

The supersolution from Lemma~\ref{lm:w_le_x_alpha} at first only provides bounds for $w$, not for its derivative. However, together with persistence of the concavity of $w$ guaranteed by Lemma~\ref{lm:u_monotone}, this implies an $L^p$ estimate for $w_x = u$ for some $p>1$.
\begin{lemma}\label{lm:u_lp_jl}
  Let $(\tau,k )=(0,0)$ and $m \in \R$.
  Suppose \eqref{initial_data} and that $u_0$ is nonincreasing.
  Then there are $p > 1$ and $C > 0$ such that the solution $(u, v)$ of \eqref{system} given by Proposition~\ref{prop:eps_reg} fulfils
  \begin{align*}
    \intom u^p(\cdot, t) \le C
    \qquad \text{for all $t \in (0, \tmax)$}.
  \end{align*}
\end{lemma}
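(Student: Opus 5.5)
Our plan is to combine the pointwise bound of Lemma~\ref{lm:w_le_x_alpha} with the monotonicity from Lemma~\ref{lm:u_monotone}. Since $(\tau,k)=(0,0)$ is covered by both lemmas, we fix $C_1>0$ and $\alpha\in(0,1)$ such that
\begin{align*}
  w(x,t)\le C_1x^\alpha \qquad \text{for all $(x,t)\in(0,1)\times(0,\tmax)$}.
\end{align*}
By Lemma~\ref{lm:u_monotone}, $u(\cdot,t)$ is nonincreasing for every $t\in(0,\tmax)$. Consequently $w(\cdot,t)$ is concave, and its derivative at $x$ is at most its mean slope on $(0,x)$:
\begin{align*}
  u(x,t)\le \frac1x\int_0^x u(\rho,t)\drho=\frac{w(x,t)}{x}\le C_1x^{\alpha-1}
\end{align*}
for all $x\in(0,1)$ and $t\in(0,\tmax)$.

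This yields a time-independent pointwise majorant with a singularity only at $x=0$. We now pick $p\in(1,\frac{1}{1-\alpha})$, for instance $p\defs\frac{2-\alpha}{2-2\alpha}$. Then $p(1-\alpha)<1$, and hence
\begin{align*}
  \intom u^p(\cdot,t)\le C_1^p\intom x^{-p(1-\alpha)}\dx=\frac{C_1^p}{1-p(1-\alpha)}
\end{align*}
for all $t\in(0,\tmax)$. This is the claim with $C\defs C_1^p/(1-p(1-\alpha))$.

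The only point needing care is the inequality $u(x,t)\le w(x,t)/x$. It is elementary: since $u(\rho,t)\ge u(x,t)$ for all $\rho\le x$, we have $w(x,t)=\int_0^x u(\rho,t)\drho\ge x\,u(x,t)$. This uses only the monotonicity from Lemma~\ref{lm:u_monotone} together with the classical regularity of $u$ for $t>0$. No real obstacle remains, because the substantial work, namely the supersolution construction and the preservation of monotonicity, has already been carried out in Lemma~\ref{lm:w_le_x_alpha} and Lemma~\ref{lm:u_monotone}.
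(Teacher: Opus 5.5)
Your proof is correct and is essentially the paper's argument. The paper also combines Lemma~\ref{lm:w_le_x_alpha} with the monotonicity from Lemma~\ref{lm:u_monotone} to obtain $u(x,t)\le w(x,t)/x\le C x^{\alpha-1}$, phrased via the mean value theorem where you compare the integral directly, and then concludes from $x\mapsto x^{\alpha-1}\in L^p((0,1))$ for all $p\in(1,\frac{1}{1-\alpha})$.
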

\begin{proof}
  Similarly as in \cite[Lemma~3.3]{FuestApproachingOptimalityBlowup2021}, we infer from the mean value theorem and Lemma~\ref{lm:u_monotone} that
  \begin{align*}
      \frac{w(x, t)}{x}
    = \frac{w(x, t) - w(0, t)}{x - 0}
    = w_x(\xi(t), t)
    = u(\xi(t), t)
    \ge u(x, t)
  \end{align*}
  for certain $\xi(t) \in (0, x)$ and all $(x, t) \in (0, 1) \times (0, \tmax)$.
  Thus, as $x \mapsto x^{\alpha-1} \in \leb p$ for all $p \in (1, \frac{1}{1-\alpha})$ and all $\alpha \in (0, 1)$,
  the desired estimate follows from Lemma~\ref{lm:w_le_x_alpha}.
\end{proof}

Boundedness of $u$ then can be readily achieved by the refined criterion in Proposition~\ref{prop:eps_reg}.

\begin{proof}[Proof of Theorem~\ref{th:gb_jl}]
  According to Lemma~\ref{lm:u_lp_jl} and Proposition~\ref{prop:eps_reg_alt_cond}(ii), we may conclude global boundedness of the solution $(u, v)$ of \eqref{system} from Proposition~\ref{prop:eps_reg}.
\end{proof}

\section{Global boundedness due to bounded sensitivity and energy: proof of Theorem~\ref{th:gb_pp}}\label{sec:gb_pp}
Our analysis in this section makes use of the Lyapunov functional
\begin{align}\label{eq:energy:def_F}
          \mathcal F(\tilde u, \tilde v)
  &\defs  \int_0^1 G(\tilde u)
          - \int_0^1 \tilde u \tilde v
          + \frac12 \int_0^1 \tilde v_x^2
          + \frac k2 \int_0^1 \tilde v^2, \\
  \label{eq:energy:def_G}
          G(s)
  &\defs  \int_1^s\int_1^\sigma \frac{1}{\rho(\rho+1)} \drho \dsigma,
\end{align}
defined for $(\tilde u, \tilde v) \in C^0([0, 1]; (0, \infty)) \times \con1$ and $s \in (0, \infty)$.
Related functionals have been widely used for the analysis of Keller--Segel systems,
see for instance \cite{NagaiEtAlApplicationTrudingerMoser1997} and \cite{GajewskiZachariasGlobalBehaviourReactiondiffusion1998} for early boundedness proofs
and the survey \cite{LankeitWinklerFacingLowRegularity2019} for an overview of blow-up techniques relying on the Lyapunov functional.

We first note that $\mc F$, indeed, decreases along trajectories of \eqref{system}.
\begin{lemma}\label{lm:energy}
  Let $(\tau,k)\in \{(0,0),(0,1),(1,1)\}$ and $m \in \R$, and suppose (\ref{initial_data}).
  Then the solution $(u, v)$ of \eqref{system} given by Proposition~\ref{prop:eps_reg} fulfils
  \begin{align}\label{eq:energy:est}
    \mathcal F(u(\cdot,t),v(\cdot,t)) + \int_{t_0}^{t} \intom (\tau v_t)^2 \le \mathcal F(u(\cdot, t_0), v(\cdot, t_0)) \qquad \text{for all } 0 < t_0 < t < \tmax.
  \end{align}
\end{lemma}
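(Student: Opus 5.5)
Since $u,v$ are smooth and positive in $[0,1]\times(0,\tmax)$, I will compute $\ddt \mc F(u,v)$ on $(t_0,t)$ and integrate in time. The choice of $G$ is the key. With $G''(s)=\frac{1}{s(s+1)}$ and $D(s)=(s+1)^{m-1}$, $S(s)=s(s+1)^m$, the relation $G''(s)S(s)=(s+1)^{m-1}=D(s)$ holds. Therefore the flux can be written as
\begin{align*}
  D(u)u_x - S(u)v_x = S(u)\big(G'(u)_x - v_x\big).
\end{align*}
Testing the first equation in \eqref{system} with $G'(u)-v$ and integrating by parts (the Neumann conditions remove the boundary terms) then gives
\begin{align*}
  \intom u_t\,(G'(u)-v) = -\intom S(u)\,(G'(u)-v)_x^2 \le 0.
\end{align*}

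Next I handle the $v$-terms of $\mc F$. Their time derivative is
\begin{align*}
  -\intom u v_t + \intom v_x v_{xt} + k\intom v v_t = \intom v_t\,(-u - v_{xx} + kv),
\end{align*}
again after integrating by parts with $v_x=0$ on the boundary. By the second equation in \eqref{system}, $-u - v_{xx} + kv = -\tau v_t - (1-k)M$. Hence the $v$-part contributes $-\tau\intom v_t^2 - (1-k)M\intom v_t$.

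It remains to dispose of the last term. If $k=0$, then \eqref{eq:eps_reg:reg} gives $\intom v=0$ for all times, so $\intom v_t=0$ and the term vanishes. If $k=1$, it is absent. Since $\tau\in\{0,1\}$ we have $\tau v_t^2=(\tau v_t)^2$. Altogether,
\begin{align*}
  \ddt\mc F(u,v) + \intom(\tau v_t)^2 \le 0.
\end{align*}

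The main obstacle is the regularity needed to justify these manipulations. On $[t_0,t]\subset(0,\tmax)$ the solution is smooth on $[0,1]\times[t_0,t]$ and $u$ is bounded below by a positive constant there, so $G(u)$, $G'(u)$ and the differentiation under the integral are all fine. Integrating from $t_0$ to $t$ yields \eqref{eq:energy:est}. For $\tau=0$, $v_t$ is still smooth for $t>0$, since $v(\cdot,t)$ is determined by the elliptic problem with smooth dependence on $u$, and the same computation applies.
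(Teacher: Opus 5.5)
Your proposal is correct and follows essentially the same route as the paper: differentiate $\mathcal F$ along the solution, use $G''(s)S(s)=D(s)$ to write the flux as $S(u)\big(G'(u)-v\big)_x$, use the second equation to turn the $v$-terms into $-\int_0^1(\tau v_t)^2-(1-k)M\int_0^1 v_t$, and drop the last term via $(1-k)\int_0^1 v=0$. Your remarks on positivity of $u$ on compact time intervals (needed for $G'(u)$) and on the regularity of $v_t$ when $\tau=0$ justify the same computation the paper carries out, and integrating from $t_0$ to $t$ gives the claim.
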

\begin{proof}
  Since $u$ and $v$ are smooth and positive on compact subsets of $[0, 1] \times (0, \tmax)$,
  we obtain
  \begin{align*}
        \ddt \mc F(u, v)
    &=  - \int_0^1 \big((u+1)^{m-1} u_x - u(u+1)^m v_x\big) \big( G'(u) - v \big)_x
        - \int_0^1 v_t \big(u + v_{xx} - k v - (1-k) M\big) \\
    &=  - \int_0^1 u (u+1)^m \left|\frac{1}{u(u+1)} u_x - v_x\right|^2
        - \int_0^1 (\tau v_t)^2
     \le  - \int_0^1 (\tau v_t)^2
    \qquad \text{in $(0, \tmax)$},
  \end{align*}
  where we have used that $(1-k)\intom v_t = 0$ by \eqref{eq:eps_reg:reg}.
  This yields \eqref{eq:energy:est} upon an integration in time.
\end{proof}

As already observed in, e.g., \cite[p.~1059]{CieslakWinklerFinitetimeBlowupQuasilinear2008} or \cite[Lemma~5]{CieslakLaurencotFiniteTimeBlowup2010},
in the considered one-dimensional setting, $\mc F$ can be estimated from below along solutions and hence the dissipation rate is integrable in time.
\begin{lemma}\label{lm:vt_l2_spacetime}
  Let $(\tau,k)\in \{(0,0),(0,1),(1,1)\}$ and $m \in \R$, suppose (\ref{initial_data})
  and denote the solution of \eqref{system} given by Proposition~\ref{prop:eps_reg} by $(u, v)$.
  For all $t_0 \in (0, \tmax)$, there then exists $C > 0$ such that
  \begin{align}\label{eq:dissipation}
    \int_{t_0}^{\tmax} \intom (\tau v_t)^2 \le C.
  \end{align}
\end{lemma}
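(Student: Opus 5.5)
The plan is to bound $\mc F$ from below along the trajectory by a constant depending only on $M$, and then use Lemma~\ref{lm:energy}. If $\tau = 0$, the left-hand side of \eqref{eq:dissipation} vanishes and any $C > 0$ works. It therefore suffices to treat $(\tau, k) = (1, 1)$. Here the energy contains the full $W^{1,2}$-norm $\frac12\intom v_x^2 + \frac12 \intom v^2$, and no mean-zero normalisation of $v$ is needed.

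\textbf{Step 1: the $G$-part is nonnegative.} From \eqref{eq:energy:def_G} we have $G(1) = G'(1) = 0$ and $G''(s) = \frac{1}{s(s+1)} > 0$ on $(0, \infty)$. Hence $G$ is convex with minimum $0$ at $s = 1$, so $\intom G(u(\cdot, t)) \ge 0$ for all $t \in (0, \tmax)$. We may simply drop this term. Its at most linear growth plays no role here.

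\textbf{Step 2: control the cross term by the one-dimensional embedding.} Using mass conservation \eqref{eq:u_mass} and the embedding $\sob12 \embed \leb\infty$ with constant $c_1 > 0$, we estimate
\begin{align*}
  \intom u v \le M \|v\|_{\leb\infty} \le c_1 M \left( \intom v_x^2 + \intom v^2 \right)^{\frac12}
  \le \frac12 \intom v_x^2 + \frac12 \intom v^2 + \frac{c_1^2 M^2}{2}
\end{align*}
in $(0, \tmax)$, where the last step is Young's inequality. Combined with Step~1, this yields $\mc F(u(\cdot, t), v(\cdot, t)) \ge -\frac{c_1^2 M^2}{2}$ for all $t \in (0, \tmax)$. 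This is exactly the point where $n = 1$ enters: in higher dimensions $\intom u v$ cannot be dominated by $\|v\|_{W^{1,2}}$ and the mass alone.

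\textbf{Step 3: conclude.} Fix $t_0 \in (0, \tmax)$. By Lemma~\ref{lm:energy}, for every $t \in (t_0, \tmax)$,
\begin{align*}
  \int_{t_0}^t \intom v_t^2 \le \mc F(u(\cdot, t_0), v(\cdot, t_0)) - \mc F(u(\cdot, t), v(\cdot, t)) \le \mc F(u(\cdot, t_0), v(\cdot, t_0)) + \frac{c_1^2 M^2}{2}.
\end{align*}
The right-hand side is finite because $u(\cdot, t_0)$ is positive and continuous and $v(\cdot, t_0) \in \con1$. Letting $t \nea \tmax$ gives \eqref{eq:dissipation} by monotone convergence.

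The only substantive step is the lower bound in Step~2. Since the energy supplies the full $W^{1,2}$-norm of $v$ when $k = 1$, I expect no real obstacle.
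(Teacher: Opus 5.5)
Your proof is correct and follows the same route as the paper: you discard $\intom G(u)\ge 0$, estimate $\intom uv\le M\|v\|_{L^\infty((0,1))}$ via a one-dimensional Sobolev embedding, absorb by Young's inequality into the quadratic $v$-terms of $\mc F$, and conclude from Lemma~\ref{lm:energy}. The only difference is cosmetic: the paper writes the argument uniformly in $(\tau,k)$ using $W^{1,1}((0,1))\embed L^\infty((0,1))$ and the $L^1$ bound for $v$ from \eqref{eq:u_mass}, whereas you use $W^{1,2}((0,1))\embed L^\infty((0,1))$ and the term $\frac k2\intom v^2$, which is legitimate since $\tau=1$ forces $k=1$ and the case $\tau=0$ is trivial.
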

\begin{proof}
  Invoking the one-dimensional embedding $\sob11 \embed \leb\infty$, \eqref{eq:u_mass} and Young's inequality,
  we see that
  \begin{align}
  \label{eq:energy:uv}
          \int_0^1 uv &\le  \norm[L^\infty(0,1)]{v} \int_0^1 u
    \le   M\norm[L^\infty((0,1))]{v}
    \le   M \norm[W^{1,1}((0,1))]{v}
    \le   M \left(\int_0^1 |v_x| + \int_0^1 |v|\right) \nn \\
    &\le  M\int_0^1 |v_x| + M \max\{M, \norm[L^1((0,1))]{v_0}\}
    \le   \frac{1}{2} \int_0^1 v_x^2 + c_1
    \qquad \text{in $(0, \tmax)$},
  \end{align}
  where $c_1 \defs \frac{M^2}{2} + M \max\{M, \norm[L^1((0,1))]{v_0}\}$.
  Since $G$ is nonnegative as the inner integrand in \eqref{eq:energy:def_G} is, plugging \eqref{eq:energy:uv} into \eqref{eq:energy:def_F} yields
  \begin{align*}
    \mathcal F(u,v) \ge - c_1
    \qquad \text{in $(0, \tmax)$},
  \end{align*}
  whence \eqref{eq:dissipation} follows from \eqref{eq:energy:est}.
\end{proof}

We next aim to control the evolution of the functional $\intom \psi(u)$, where $\psi(u) \sim u^{-m}$ for $m < -1$.
Related functionals have been considered in Lemma~\ref{lm:eqs_reg_lp},
but arguing as in the proof of Lemma~\ref{lm:eqs_reg_lp} would require applying maximal Sobolev regularity estimates (cf.\ Lemma~\ref{lm:max_reg})
with the integrability exponent $1$, which is not permitted.
Instead, we shall rely on the pointwise estimate of $v_x$ derived in Lemma~\ref{lm:vx_linfty}(ii).
This leads to an additional summand $\intom (\tau v_t)^2$,
which eventually will be treated by Lemma~\ref{lm:vt_l2_spacetime}.

We also note that for the following proof, the embedding $\sob11 \embed \leb\infty$ is crucial
(unlike in the proof of Lemma~\ref{lm:vt_l2_spacetime} above, where we could also have used $\sob1p \embed \leb \infty$ for any $p \in (1, 2)$ instead).
\begin{lemma}\label{lm:ddt_psi_u}
  Let $(\tau, k) \in \{(0, 0), (0, 1), (1, 1)\}$ and $m \in \R$,
  and suppose \eqref{initial_data}.
  Then there exist $C > 0$ and
  \begin{align}\label{eq:ddt_psi_u:f_cond}
    f \in C^0([0, \infty)) \quad \text{with} \quad \lim_{s \to \infty} f(s) = \infty
  \end{align}
  such that the solution $(u, v)$ given by Proposition~\ref{prop:eps_reg} fulfils
  \begin{align}\label{eq:ddt_psi_u:statement}
         \ddt \int_0^1 \psi(u)
    &\le -f\left(\int_0^1 \psi(u)\right) + C + C \int_0^1 (\tau v_t)^2
    \qquad \text{in $(0, \tmax)$},
  \end{align}
  where
  \begin{align}\label{eq:ddt_psi_u:def_psi}
    \psi(s) \defs \int_0^s \int_0^\sigma (\rho + 1)^{-m-2} \drho \dsigma \qquad \text{for $s \ge 0$}.
  \end{align}
\end{lemma}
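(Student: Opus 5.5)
The plan is to test the first equation in \eqref{system} with $\psi'(u)$. A direct computation using $\psi''(s) = (s+1)^{-m-2}$ gives
\begin{align*}
  \ddt \int_0^1 \psi(u)
  &= -\int_0^1 (u+1)^{-3} u_x^2 + \int_0^1 u (u+1)^{-2} u_x v_x
  \qquad \text{in $(0, \tmax)$}.
\end{align*}
The dependence on $m$ cancels exactly, and this is the point of the choice \eqref{eq:ddt_psi_u:def_psi}.

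For the cross term, I will use Young's inequality together with $u^2 (u+1)^{-1} \le u+1$:
\begin{align*}
  \int_0^1 u(u+1)^{-2} u_x v_x
  \le \frac12 \int_0^1 (u+1)^{-3} u_x^2 + \frac12 \|v_x\|_{L^\infty((0,1))}^2 (M+1).
\end{align*}
Lemma~\ref{lm:vx_linfty}(ii) then bounds the last term by $C + C\int_0^1 (\tau v_t)^2$. Writing $\mathcal D \defs \int_0^1 (u+1)^{-3} u_x^2$, this yields
\[
  \ddt \int_0^1 \psi(u) \le -\tfrac12 \mathcal D + C + C \int_0^1 (\tau v_t)^2 .
\]

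The remaining and main step is to bound $\mathcal D$ from below by a function of $\int_0^1 \psi(u)$ that tends to infinity. Since $\psi$ may grow like an arbitrarily large power $u^{-m}$, no Gagliardo--Nirenberg type interpolation with polynomial exponents will suffice. Instead, I will use $\sob11 \embed \leb\infty$ applied to $\ln(u+1)$. By Cauchy--Schwarz,
\[
  \int_0^1 |(\ln(u+1))_x| = \int_0^1 (u+1)^{-1} |u_x| \le \mathcal D^{\frac12} \left(\int_0^1 (u+1)\right)^{\frac12} \le \sqrt{(M+1)\mathcal D}.
\]
Combining this with $\int_0^1 \ln(u+1) \le M$ gives
\[
  \|\ln(u+1)\|_{L^\infty((0,1))} \le M + \sqrt{(M+1)\mathcal D}.
\]

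Next I will exploit the monotonicity of $\psi$. We have $\psi(0) = 0$, $\psi' > 0$ on $(0,\infty)$, and $\psi$ is convex, so it is strictly increasing with $\psi(s) \to \infty$ as $s \to \infty$, for every $m \in \R$. Hence
\[
  \int_0^1 \psi(u) \le \psi\bigl(\|u\|_{L^\infty((0,1))}\bigr) \le H(\mathcal D),
  \qquad H(\xi) \defs \psi\bigl(\ure^{M + \sqrt{(M+1)\xi}}\bigr),
\]
where $H$ is continuous and strictly increasing from $[0,\infty)$ onto $[H(0), \infty)$. I then define $f(s) \defs \frac12 H^{-1}(s)$ for $s \ge H(0)$ and $f(s) \defs 0$ for $s \in [0, H(0))$. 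This $f$ is continuous because $H^{-1}(H(0)) = 0$, it tends to infinity as $s \to \infty$, and it satisfies $\frac12 \mathcal D \ge f\bigl(\int_0^1 \psi(u)\bigr)$ in both cases. Inserting this into the differential inequality above yields \eqref{eq:ddt_psi_u:statement}.

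I expect the only delicate point to be this lower bound for the dissipation. It is exactly here that the one-dimensional $W^{1,1}$ embedding, rather than an $L^p$ interpolation, is essential, because the dissipation controls only a logarithmic quantity while $\psi$ may grow polynomially of arbitrary order.
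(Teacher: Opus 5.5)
Your proof is correct and follows essentially the same route as the paper. You test with $\psi'(u)$ so that the $m$-dependence cancels, handle the cross term with Young's inequality and Lemma~\ref{lm:vx_linfty}(ii), and then apply $\sob11 \embed \leb\infty$ to $\ln(u+1)$ together with the monotonicity of $\psi$ to bound $\int_0^1 \psi(u)$ by a function of the dissipation. The differences are only cosmetic: you use Cauchy--Schwarz (a square-root bound) and invert the composite $H$ directly, whereas the paper uses Young's inequality to get $\|\ln(u+1)\|_{\leb\infty} \le \int_0^1 (u+1)^{-3}u_x^2 + 2(M+1)$ and takes $f(s) = \frac12 \ln(\max\{\psi^{-1}(s), 1\})$; also, your observation that $\psi$ is always unbounded shows that the paper's separate case of bounded $\psi$ never actually arises.
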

\begin{proof}
  A direct computation yields
  \begin{align}\label{eq:ddt_psi_u:ddt}
          \ddt \int_0^1 \psi(u)
    &=    \int_0^1 \big( (u+1)^{m-1} u_x \big)_x \psi'(u)
          - \int_0^1 \big( u (u+1)^m v_x \big)_x \psi'(u) \notag \\
    &=    - \int_0^1 (u+1)^{-3} u_x^2
          + \int_0^1 u (u+1)^{-2} u_x v_x \notag \\
    &\le  - \frac12 \int_0^1 (u+1)^{-3} u_x^2
          + \frac12 \int_0^1 u^2 (u+1)^{-1} v_x^2 \notag \\
    &\le  - \frac12 \int_0^1 (u+1)^{-3} u_x^2
          + \frac{M c_1}2 \left( \int_0^1 (\tau v_t)^2 + 1 \right)
    \qquad \text{in $(0, \tmax)$},
  \end{align}
  where $c_1 > 0$ denotes the constant given by Lemma~\ref{lm:vx_linfty}(ii).
  If $\psi$ is bounded, this already implies \eqref{eq:ddt_psi_u:statement} for $f \defs \operatorname{id}$ and $C \defs \|\psi\|_{L^\infty((0, \infty))} + \frac{M c_1}{2}$.
  Accordingly, we now assume that $\psi$ is unbounded.
  Since $\sob11 \embed \leb\infty$ (with embedding constant $1$), we have
  \begin{align*}
          \|\ln(u+1)\|_{\leb\infty}
    &\le  \int_0^1 \big|\big(\ln(u + 1)\big)_x\big| + \int_0^1 \ln(u + 1) \\
    &\le  \int_0^1 (u+1)^{-1} |u_x| + \int_0^1 (u + 1) \\
    &\le  \int_0^1 (u+1)^{-3} u_x^2 + 2 \int_0^1 (u + 1) \\
    &\le  \int_0^1 (u+1)^{-3} u_x^2 + c_2
    \qquad \text{in $(0, \tmax)$},
  \end{align*}
  where $c_2 \defs 2(M + 1)$.
  Since $\psi' > 0$ in $(0, \infty)$ and as $\psi$ is unbounded, $\psi$ is a bijection on $[0, \infty)$ with increasing inverse.
  Thus,
  \begin{align*}
    f \colon [0, \infty) \to [0, \infty), \quad
    f(s) = \frac{1}{2} \ln\big(\max\{\psi^{-1}(s), 1\}\big),
  \end{align*}
  is also increasing and fulfils \eqref{eq:ddt_psi_u:f_cond}.
  Moreover,
  \begin{align*}
        f\left( \int_0^1 \psi(u) \right)
    &\le f\left( \int_0^1 \psi\left( \ure^{\ln(u+1)} \right) \right)
    \le f\left( \psi\left( \ure^{\|\ln(u+1)\|_{\leb\infty}} \right)\right) \\
    &=  \frac{1}{2} \ln\left( \max\left\{\ure^{\|\ln(u+1)\|_{\leb\infty}}, 1\right\}\right)
    =   \frac{1}{2} \|\ln(u+1)\|_{\leb\infty} \\
    &\le \frac12 \int_0^1 (u+1)^{-3} u_x^2 + \frac{c_2}{2}
    \qquad \text{in $(0, \tmax)$}.
  \end{align*}
  In combination with \eqref{eq:ddt_psi_u:ddt}, this yields
  \begin{align*}
        \ddt \int_0^1 \psi(u)
    &\le -f\left(\int_0^1 \psi(u)\right) + \frac{c_2 + M c_1}{2} + \frac{Mc_1}{2} \int_0^1 (\tau v_t)^2
    \qquad \text{in $(0, \tmax$)}
  \end{align*}
  and hence \eqref{eq:ddt_psi_u:statement} upon an evident choice of $C > 0$.
\end{proof}

In order to derive boundedness of $\int_0^1 \psi(u)$ from Lemma~\ref{lm:ddt_psi_u} and Lemma~\ref{lm:vt_l2_spacetime}, we will make use of the following elementary ODE lemma.
\begin{lemma}\label{lm:ode}
  Let $0 \le t_0 < T \le \infty$, suppose that $f \in C^0(\R)$, $L > 0$, $g \in C^0((t_0, T))$ and $y \in C^0([t_0, T)) \cap C^1((t_0, T))$ are such that
  \begin{align}\label{eq:ode:ass_ode}
    y' \le -f(y) + L + g
    \qquad \text{in $(t_0, T)$},
  \end{align}
  and assume that there are $K_f, K_g > 0$ such that
  \begin{align}\label{eq:ode:ass_f}
    f(s) &\ge L \qquad \text{for all $s \ge K_f$}
  \end{align}
  and
  \begin{align}\label{eq:ode:ass_g}
    \int_{t_0}^T g(s) \ds &\le K_g.
  \end{align}
  Then
  \begin{align}\label{eq:ode:statement}
    y(t) \le \max\{y(t_0), K_f\} + K_g
    \qquad \text{for all $t \in (t_0, T)$}.
  \end{align}
\end{lemma}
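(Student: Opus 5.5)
We argue by a first-exit-time argument on the set where $y$ exceeds the threshold $K_f$. Set $B \defs \max\{y(t_0), K_f\}$ and fix $t \in (t_0, T)$. If $y(t) \le B$, there is nothing to show. Otherwise we introduce
\[
  s_0 \defs \sup\{\, s \in [t_0, t] \mid y(s) \le B \,\}.
\]
This set contains $t_0$ because $y(t_0) \le B$. By continuity of $y$ on $[t_0, T)$ we get $y(s_0) \le B$, and in fact $y(s_0) = B$ since $s_0 < t$ and $y(t) > B$. Moreover $y > B \ge K_f$ throughout $(s_0, t]$.

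Next we use \eqref{eq:ode:ass_f}. Since $y(s) \ge K_f$ for all $s \in (s_0, t)$, it gives $f(y(s)) \ge L$ there. Inserting this into \eqref{eq:ode:ass_ode} leaves
\[
  y' \le -f(y) + L + g \le g \qquad \text{in } (s_0, t).
\]

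We then integrate over $(s_0, t)$. Here $y \in C^0([s_0, t]) \cap C^1((s_0, t))$, so we integrate on $[s_0 + \eta, t - \eta]$ for small $\eta > 0$ and let $\eta \searrow 0$, using continuity of $y$ at both endpoints. This yields
\[
  y(t) \le y(s_0) + \int_{s_0}^t g.
\]
The only step needing care is bounding $\int_{s_0}^t g$ by $K_g$. Assumption \eqref{eq:ode:ass_g} controls only the integral over the full interval $(t_0, T)$, and $g$ might be negative elsewhere, so restricting to a subinterval need not preserve the bound. In the intended application $g = C\intom(\tau v_t)^2 \ge 0$, so I would read \eqref{eq:ode:ass_g} for nonnegative $g$, or equivalently interpret it as a bound on $\sup_{t_0\le a<b<T}\int_a^b g$, and state this convention explicitly. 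Under that reading $\int_{s_0}^t g \le K_g$, and hence $y(t) \le B + K_g$, which is \eqref{eq:ode:statement}.
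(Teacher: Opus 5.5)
Your proof is correct and is essentially the paper's argument. The paper takes a connected component $I$ of the open set $\{t : y(t) > \max\{y(t_0),K_f\}\}$ and integrates $y' \le g$ from $\inf I$, which is exactly your last-exit time $s_0$.

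Your caveat about the sign of $g$ is well founded. The paper's proof silently uses $\int_{\inf I}^t g \le \int_{t_0}^\infty g$, which requires $g \ge 0$. Without that assumption the statement fails: take $f \equiv L$ and $y' = g$, where $g$ has a large positive bump followed by a compensating negative one. The assumption does hold in the application, where $g = c_1 \int_0^1 (\tau v_t)^2$.
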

\begin{proof}
  Since $y$ is continuous, the set $A \defs \{\, t \in (t_0, \infty) \mid y(t) > \max\{y(t_0), K_f\} \sfed c_1\,\}$ is open.
  Then $f(y) \ge L$ in $A$ by \eqref{eq:ode:ass_f} and hence \eqref{eq:ode:ass_ode} implies
  \begin{align}\label{eq:ode:ass_ode2}
    y' \le -f(y) + L + g \le g
    \qquad \text{in $A$}.
  \end{align}
  Let $I$ be a connected component of $A$, then $I$ is an open interval.
  By continuity of $y$ and as $y(t_0) \le c_1$, we have $y(\inf I) \le c_1$, so that an integration of \eqref{eq:ode:ass_ode2} in time and applying \eqref{eq:ode:ass_g} shows
  \begin{align*}
        y(t)
    \le y(\inf I) + \int_{\inf I}^t g(s) \ds
    \le c_1 + \int_{t_0}^\infty g(s) \ds
    \le c_1 + K_g
    \qquad \text{for all $t \in I$}.
  \end{align*}
  That is, \eqref{eq:ode:statement} also holds for all $t \in A$.
\end{proof}

If $m \le -1$, the lemmata above imply an $L\log L$ bound for $u$, which renders Proposition~\ref{prop:eps_reg} applicable.
\begin{lemma}\label{lm:ulnu}
  Let $(\tau, k) \in \{(0, 0), (0, 1), (1, 1)\}$ and $m \le -1$,
  and suppose \eqref{initial_data}.
  For each $t_0 \in (0, \tmax)$, there then exists $C > 0$ such that
  \begin{align}\label{eq:ulnu:statement}
    \int_0^1 (u+1) \ln(u+1) \le C
    \qquad \text{in $(t_0, \tmax)$.}
  \end{align}
\end{lemma}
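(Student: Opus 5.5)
The plan is to combine Lemma~\ref{lm:ddt_psi_u}, Lemma~\ref{lm:vt_l2_spacetime} and the ODE comparison Lemma~\ref{lm:ode}, and then to compare $\psi$ with $(s+1)\ln(s+1)$ for $m \le -1$.

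Fix $t_0 \in (0, \tmax)$ and set $y(t) \defs \intom \psi(u(\cdot, t))$ for $t \in [t_0, \tmax)$. By the regularity in \eqref{eq:eps_reg:reg}, $y \in C^0([t_0, \tmax)) \cap C^1((t_0, \tmax))$. Lemma~\ref{lm:ddt_psi_u} provides $C > 0$ and an $f$ with \eqref{eq:ddt_psi_u:f_cond} such that
\[
  y' \le -f(y) + L + g \quad \text{in } (t_0, \tmax), \qquad L \defs C, \quad g \defs C \intom (\tau v_t)^2 .
\]
Extend $f$ to $\R$ continuously, for instance by $f(s) \defs f(0)$ for $s<0$. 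Since $f(s) \to \infty$, there is $K_f > 0$ with $f \ge L$ on $[K_f, \infty)$, which gives \eqref{eq:ode:ass_f}. Lemma~\ref{lm:vt_l2_spacetime} yields \eqref{eq:ode:ass_g} with $K_g \defs C \cdot C'$, where $C'$ is the constant from \eqref{eq:dissipation} for this $t_0$. The function $g$ is continuous on $(t_0, \tmax)$ by smoothness of $v$. Lemma~\ref{lm:ode} then gives $y(t) \le \max\{y(t_0), K_f\} + K_g$ for all $t \in (t_0, \tmax)$.

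It remains to show that $(s+1)\ln(s+1) \le c\,\psi(s) + c$ for all $s \ge 0$ when $m \le -1$. Here $-m-2 \ge -1$, so $(\rho+1)^{-m-2} \ge (\rho+1)^{-1}$ for all $\rho \ge 0$. Hence
\[
  \psi(s) \ge \int_0^s \ln(\sigma+1) \dsigma = (s+1)\ln(s+1) - s .
\]
Using $s \le \frac12 (s+1)\ln(s+1) + c_0$ (which holds since $\ln(s+1)\ge 2$ for large $s$), we obtain $(s+1)\ln(s+1) \le 2\psi(s) + 2c_0$. Integrating over $(0,1)$ gives \eqref{eq:ulnu:statement} with a constant depending on $t_0$ through $y(t_0)$ and $K_g$.

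The main obstacle has already been absorbed into Lemma~\ref{lm:ddt_psi_u}: it requires the superlinear absorption $f$ together with the time-integrable source. The only remaining subtlety is the borderline $m = -1$. There $\psi$ is exactly of order $(s+1)\ln(s+1)$, so this case must still give an unbounded $\psi$, and the comparison above handles it because the inequality $(\rho+1)^{-m-2} \ge (\rho+1)^{-1}$ becomes an equality. I would also check that $y$ is continuous at $t_0$, which follows since $u(\cdot,t_0)$ is smooth and positive.
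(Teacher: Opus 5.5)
Your proposal is correct and takes essentially the same route as the paper: you apply Lemma~\ref{lm:ode} with the $f$ from Lemma~\ref{lm:ddt_psi_u} (extended by $f(0)$ on $(-\infty,0)$), with $L$ the constant from that lemma, $g = L\int_0^1(\tau v_t)^2$ and $K_g$ from Lemma~\ref{lm:vt_l2_spacetime}, and then use $\psi(s)\ge (s+1)\ln(s+1)-s$ for $m\le -1$. The only difference is cosmetic and lies in the last step: the paper adds $\int_0^1 u = M$ from mass conservation to get the constant $c_2+M$, whereas you absorb the linear term via $s\le\frac12(s+1)\ln(s+1)+c_0$; both work.
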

\begin{proof}
  We let $\psi$ be as in \eqref{eq:ddt_psi_u:def_psi}
  and denote the function and constant given by Lemma~\ref{lm:ddt_psi_u} by $f$ and $c_1$, respectively.
  Due to Lemma~\ref{lm:ddt_psi_u} and Lemma~\ref{lm:vt_l2_spacetime}, an application of Lemma~\ref{lm:ode} to this choice of $f$ (extended by $f(0)$ on $(-\infty, 0)$), $L = c_1$, $g = c_1 \int_0^1 (\tau v_t)^2$ and $T = \tmax$ yields
  \begin{align*}
    \sup_{t \in (t_0, \tmax)} \int_0^1 \psi(u(\cdot, t)) \le c_2
  \end{align*}
  for some $c_2 > 0$.
  This entails \eqref{eq:ulnu:statement} for $C \defs c_2 + M$, since the assumption $m \le -1$ implies
  \begin{align*}
        \psi(s)
    =   \int_0^s \int_0^\sigma (\rho + 1)^{-m-2} \drho \dsigma
    \ge \int_0^s \int_0^\sigma (\rho + 1)^{-1} \drho \dsigma
    =   \int_0^s \ln(\sigma + 1) \dsigma
    =   (s+1)\ln(s + 1) - s
  \end{align*}
  for all $s \ge 0$.
\end{proof}

\begin{proof}[Proof of Theorem~\ref{th:gb_pp}]
  Given such a solution $u$, we fix an arbitrary $t_0 \in (0, \tmax)$.
  Then $u$ is bounded in $\Ombar \times [0, t_0]$, which combined with Lemma~\ref{lm:ulnu} shows boundedness of $\intom (u+1) \ln(u+1)$ in $(0, \tmax)$.
  Therefore, the claim follows from Proposition~\ref{prop:eps_reg_alt_cond}(ii) and Proposition~\ref{prop:eps_reg}.
\end{proof}

\appendix
\section{Appendix}\label{appendix}
We have postponed the proof of Proposition¨\ref{lm:ex_w_hoelder_u_not_equiint}, which asserts that the condition in Proposition~\ref{prop:eps_reg_alt_cond}(i) is strictly stronger than the one in Proposition~\ref{prop:eps_reg_alt_cond}(iii).
\begin{proof}[Proof of Proposition~\ref{lm:ex_w_hoelder_u_not_equiint}]
  As to (i), we let $k \in \N$ and set
  \begin{align*}
    u_k \defs \sum_{j=0}^{k-1} u_{k, j}, \quad \text{where} \quad
    u_{k, j} \defs k \mathds 1_{(a_{k, j}, b_{k, j})}, \quad
    a_{k, j} \defs \frac{j}{k}, \quad
    b_{k, j} \defs \frac{j}{k} + \frac{1}{k^2}.
  \end{align*}
  The intervals $(a_{k, j}, b_{k, j})$ are pairwise disjoint and contained in $(0, 1)$.
  Since $|(a_{k, j}, b_{k, j})| = k^{-2}$, 
  \begin{align*}
    \int_0^1 u_k = \frac{k \cdot k}{k^2} = 1
    \quad \text{and} \quad
    |\operatorname{ess\,supp} u_k| = \frac{k}{k^2} = \frac1k.
  \end{align*}
  This implies \eqref{eq:ex_w_hoelder_u_not_equiint:not_equi}
  and that the function $w_k$, defined by $w_k(x) = \int_0^x u_k(\rho) \drho$ for $x \in [0, 1]$, belongs to $W^{1, 1}((0, 1))$.
  For all $j \in \{0, \dots, k-1\}$ and all $0 \le c \le d \le k^{-2}$ (implying that $(d-c)^\frac12 \le k^{-1}$), we have
  \begin{align*}
        w_k(a_{k, j} + d) - w_k(a_{k, j} + c)
    =   \int_{a_j + c}^{a_j + d} u_{k, j}
    =   k (d-c)
    \le (d-c)^\frac12,
  \end{align*}
  while for all $j \in \{0, \dots, k-1\}$ and all $i \in \{0, k-j\}$,
  \begin{align*}
        w_k(a_{k, j+i}) - w_k(a_{k, j})
    =   i \int_0^1 u_{k, 1}
    =   \frac{i}{k}
    =   \frac{i^\frac12}{k^\frac12} (a_{k, j+i} - a_{k, j})^\frac12
    \le (a_{k, j+i} - a_{k, j})^\frac12.
  \end{align*}
  In combination, for all $j \in \{0, \dots, k-1\}$, $i \in \{1, k-j-1\}$ and $c, d \in [0, k^{-2}]$, this yields
  \begin{align*}
    &\pe  w_k(a_{k, j+i} + d) - w_k(a_{k, j} + c) \\
    &=    w_k(a_{k, j+i} + d) - w_k(a_{k, j+i})
          + w_k(a_{k, j+i}) - w_k(a_{k, j+1})
          + w_k(b_{k, j}) - w_k(a_{k, j} + c) \\
    &\le  (a_{k, j+i} + d - a_{k, j+i})^\frac12
          + (a_{k, j+i} - a_{k, j+1})^\frac12
          + \big(b_{k, j} - (a_{k, j} + c)\big)^\frac12 \\
    &\le  \sqrt3 \big((a_{k, j+i} + d) - (a_{k, j} + c)\big)^\frac12.
  \end{align*}
  Since $w$ is constant outside of the intervals $(a_{k, j}, b_{k, j})$,
  we conclude $|w_k(x) - w_k(y)| \le \sqrt3 |x-y|^\frac12$ for all $x, y \in [0, 1]$.
  As moreover $\|w_k\|_{\con0} = \|u_k\|_{\leb1} = 1$, also the first assertion in \eqref{eq:ex_w_hoelder_u_not_equiint:hoelder} holds. That is, (i) is proven.

  The functions $w_k$ from (i) rapidly yield examples for (ii).
  For instance, for any $\zeta \in C^0([0, 1); [0, 1])$ satisfying $\zeta(1-2^{-2k}) = 0$ and $\zeta(1-2^{-2k-1}) = 1$ for all $k \in \N_0$, the function
  \[
    w \colon [0, 1) \to \R, \quad w(x, t) = w_k(x) \zeta(t) \text{ for $x \in [1-2^{-2k}, 1-2^{-2k-2})$, $k \in \N_0$ and $t \in [0, 1)$},
  \]
  can easily be seen to satisfy all required properties.
\end{proof}

\section*{Acknowledgments}
{The first author acknowledges support of the Natural Science Foundation of Shanghai within the project 23ZR1400100 and the Fundamental Research Funds for the Central Universities.}



\addcontentsline{toc}{section}{References}

\footnotesize
\begin{thebibliography}{10}
\setlength{\itemsep}{0.2pt}

\bibitem{AmannNonhomogeneousLinearQuasilinear1993}
\textsc{Amann, H.}:
\newblock {\em Nonhomogeneous linear and quasilinear elliptic and
  pa\-ra\-bo\-lic boundary value problems}.
\newblock In \textsc{Schmeisser, H.} and \textsc{Triebel, H.}, editors, {\em
  Function {{Spaces}}, {{Differential Operators}} and {{Nonlinear Analysis}}},
  \href{https://doi.org/10.1007/978-3-663-11336-2_1}{pages 9--126}.
  Vieweg+Teubner Verlag, Wiesbaden, 1993.
\newblock

\bibitem{bellomo2017finite}
\textsc{Bellomo, N.} and \textsc{Winkler, M.}:
\newblock {\em Finite-time blow-up in a degenerate chemotaxis system with flux
  limitation}.
\newblock Transactions of the American Mathematical Society, Series B,
  \href{https://doi.org/10.1090/btran/17}{4(2):31--67}, 2017.
\newblock

\bibitem{BieganowskiEtAlBoundednessSolutionsCritical2019}
\textsc{Bieganowski, B.}, \textsc{Cie{\'s}lak, T.}, \textsc{Fujie, K.}, and
  \textsc{Senba, T.}:
\newblock {\em Boundedness of solutions to the critical fully parabolic
  quasilinear one-dimensional {{Keller-Segel}} system}.
\newblock Math. Nachr.,
  \href{https://doi.org/10.1002/mana.201800175}{292(4):724--732}, 2019.
\newblock

\bibitem{BlackEtAlRelaxedParameterConditions2021}
\textsc{Black, T.}, \textsc{Fuest, M.}, and \textsc{Lankeit, J.}:
\newblock {\em Relaxed parameter conditions for chemotactic collapse in
  logistic-type parabolic--elliptic {{Keller--Segel}} systems}.
\newblock Z. F{\"u}r Angew. Math. Phys.,
  \href{https://doi.org/10.1007/s00033-021-01524-8}{72(3):Art.~96}, 2021.
\newblock

\bibitem{BlanchetEtAlTwodimensionalKellerSegelModel2006}
\textsc{Blanchet, A.}, \textsc{Dolbeault, J.}, and \textsc{Perthame, B.}:
\newblock {\em Two-dimensional {{Keller-Segel}} model: optimal critical mass
  and qualitative properties of the solutions}.
\newblock Electron. J. Differ. Equ., pages No. 44, 32 pages, 2006.

\bibitem{caffarelli1982partial}
\textsc{Caffarelli, L.}, \textsc{Kohn, R.}, and \textsc{Nirenberg, L.}:
\newblock {\em Partial regularity of suitable weak solutions of the
  {{Navier-Stokes}} equations}.
\newblock Comm. Pure Appl. Math.,
  \href{https://doi.org/10.1002/cpa.3160350604}{35(6):771--831}, 1982.
\newblock

\bibitem{CaoGlobalSolutionsChemotaxis2017}
\textsc{Cao, X.}:
\newblock {\em Global solutions of some chemotaxis systems}.
\newblock PhD thesis, Universit\"at Paderborn, Paderborn, 2017.

\bibitem{CaoLargeTimeBehavior2017}
\textsc{Cao, X.}:
\newblock {\em Large time behavior in the logistic {{Keller-Segel}} model via
  maximal {{Sobolev}} regularity}.
\newblock Discrete Contin. Dyn. Syst. - Ser. B,
  \href{https://doi.org/10.3934/dcdsb.2017141}{22(9):3369--3378}, 2017.
\newblock

\bibitem{CaoFuestFinitetimeBlowupFully2025}
\textsc{Cao, X.} and \textsc{Fuest, M.}:
\newblock {\em Finite-time blow-up in fully parabolic quasilinear
  {{Keller}}--{{Segel}} systems with supercritical exponents}.
\newblock Calc. Var. Partial Differ. Equ.,
  \href{https://doi.org/10.1007/s00526-025-02944-4}{64:Art. 89}, 2025.
\newblock

\bibitem{CaoGaoCriticalMassQuasilinear2023}
\textsc{Cao, X.} and \textsc{Gao, X.}:
\newblock {\em Critical mass in a quasilinear parabolic-elliptic
  {{Keller-Segel}} model}.
\newblock J. Differ. Equ.,
  \href{https://doi.org/10.1016/j.jde.2023.03.005}{361:449--471}, 2023.
\newblock

\bibitem{caotao2021boundedness}
\textsc{Cao, X.} and \textsc{Tao, Y.}:
\newblock {\em Boundedness and stabilization enforced by mild saturation of
  taxis in a producer-scrounger model}.
\newblock Nonlinear Anal. Real World Appl.,
  \href{https://doi.org/10.1016/j.nonrwa.2020.103189}{57:Paper No. 103189, 24
  pages}, 2021.
\newblock

\bibitem{cieslak2018no}
\textsc{Cie{\'s}lak, T.} and \textsc{Fujie, K.}:
\newblock {\em No critical nonlinear diffusion in {{1D}} quasilinear fully
  parabolic chemotaxis system}.
\newblock Proc. Am. Math. Soc.,
  \href{https://doi.org/10.1090/proc/13939}{146(6):2529--2540}, 2018.
\newblock

\bibitem{CieslakFujieGlobalExistence1D2020}
\textsc{Cie{\'s}lak, T.} and \textsc{Fujie, K.}:
\newblock {\em Global existence in the {{1D}} quasilinear parabolic-elliptic
  chemotaxis system with critical nonlinearity}.
\newblock Discrete Contin. Dyn. Syst. Ser. S,
  \href{https://doi.org/10.3934/dcdss.2020009}{13(2):165--176}, 2020.
\newblock

\bibitem{CieslakEtAlNonlinearFisherInformation2025}
\textsc{Cie{\'s}lak, T.}, \textsc{Fujie, K.}, and \textsc{Hosono, T.}:
\newblock {\em Nonlinear {{Fisher}} information, corresponding functional
  inequalities and applications}.
\newblock Preprint, \href {https://arxiv.org/abs/2509.01475}
  {{arXiv:2509.01475}}, 2025.

\bibitem{CieslakLaurencotFiniteTimeBlowup2010}
\textsc{Cie{\'s}lak, T.} and \textsc{Lauren{\c c}ot, {\relax Ph}.}:
\newblock {\em Finite time blow-up for a one-dimensional quasilinear
  parabolic--parabolic chemotaxis system}.
\newblock Ann. Inst. Henri Poincar{\'e} C Anal. Non Lin{\'e}aire,
  \href{https://doi.org/10.1016/j.anihpc.2009.11.016}{27(1):437--446}, 2010.
\newblock

\bibitem{cieslak2010looking}
\textsc{Cie{\'s}lak, T.} and \textsc{Lauren{\c c}ot, {\relax Ph}.}:
\newblock {\em Looking for critical nonlinearity in the one-dimensional
  quasilinear {{Smoluchowski-Poisson}} system}.
\newblock Discrete Contin. Dyn. Syst.,
  \href{https://doi.org/10.3934/dcds.2010.26.417}{26(2):417--430}, 2010.
\newblock

\bibitem{CieslakStinnerFinitetimeBlowupGlobalintime2012}
\textsc{Cie{\'s}lak, T.} and \textsc{Stinner, {\relax Ch}.}:
\newblock {\em Finite-time blowup and global-in-time unbounded solutions to a
  parabolic--parabolic quasilinear {{Keller}}--{{Segel}} system in higher
  dimensions}.
\newblock J. Differ. Equ.,
  \href{https://doi.org/10.1016/j.jde.2012.01.045}{252(10):5832--5851}, 2012.
\newblock

\bibitem{CieslakStinnerFiniteTimeBlowupSupercritical2014}
\textsc{Cie{\'s}lak, T.} and \textsc{Stinner, {\relax Ch}.}:
\newblock {\em Finite-time blowup in a supercritical quasilinear
  parabolic-parabolic {{Keller}}--{{Segel}} system in dimension 2}.
\newblock Acta Appl. Math.,
  \href{https://doi.org/10.1007/s10440-013-9832-5}{129(1):135--146}, 2014.
\newblock

\bibitem{CieslakStinnerNewCriticalExponents2015}
\textsc{Cie{\'s}lak, T.} and \textsc{Stinner, {\relax Ch}.}:
\newblock {\em New critical exponents in a fully parabolic quasilinear
  {{Keller}}--{{Segel}} system and applications to volume filling models}.
\newblock J. Differ. Equ.,
  \href{https://doi.org/10.1016/j.jde.2014.12.004}{258(6):2080--2113}, 2015.
\newblock

\bibitem{CieslakWinklerFinitetimeBlowupQuasilinear2008}
\textsc{Cie{\'s}lak, T.} and \textsc{Winkler, M.}:
\newblock {\em Finite-time blow-up in a quasilinear system of chemotaxis}.
\newblock Nonlinearity,
  \href{https://doi.org/10.1088/0951-7715/21/5/009}{21(5):1057--1076}, 2008.
\newblock

\bibitem{DiBenedettoDegenerateParabolicEquations1993}
\textsc{DiBenedetto, E.}:
\newblock {\em Degenerate {{Parabolic Equations}}}.
\newblock Universitext. Springer New York, New York, NY, 1993.
\newblock

\bibitem{DingWinklerRadialBlowupQuasilinear2024}
\textsc{Ding, M.} and \textsc{Winkler, M.}:
\newblock {\em Radial blow-up in quasilinear {{Keller-Segel}} systems:
  Approaching the full picture}.
\newblock Nonlinearity, 37(12):Paper No. 125006, 34, 2024.

\bibitem{FriedmanPartialDifferentialEquations1976}
\textsc{Friedman, A.}:
\newblock {\em Partial differential equations}.
\newblock R. E. Krieger Pub. Co, Huntington, N.Y, 1976.

\bibitem{FuestApproachingOptimalityBlowup2021}
\textsc{Fuest, M.}:
\newblock {\em Approaching optimality in blow-up results for
  {{Keller}}--{{Segel}} systems with logistic-type dampening}.
\newblock Nonlinear Differ. Equ. Appl. NoDEA,
  \href{https://doi.org/10.1007/s00030-021-00677-9}{28(2):Art.~16}, 2021.
\newblock

\bibitem{FuestLankeitCornersCollapseSimple2023}
\textsc{Fuest, M.} and \textsc{Lankeit, J.}:
\newblock {\em Corners and collapse: {{Some}} simple observations concerning
  critical masses and boundary blow-up in the fully parabolic
  {{Keller}}--{{Segel}} system}.
\newblock Appl. Math. Lett.,
  \href{https://doi.org/10.1016/j.aml.2023.108788}{146:Article 108788}, 2023.
\newblock

\bibitem{FuestEtAlBlowupFullyParabolic2026}
\textsc{Fuest, M.}, \textsc{Lankeit, J.}, and \textsc{Mizukami, M.}:
\newblock {\em Blow-up in the fully parabolic {{Keller}}--{{Segel}} system with
  logistic source}.
\newblock Preprint, 2026.

\bibitem{GajewskiZachariasGlobalBehaviourReactiondiffusion1998}
\textsc{Gajewski, H.} and \textsc{Zacharias, K.}:
\newblock {\em Global behaviour of a reaction-diffusion system modelling
  chemotaxis}.
\newblock Math. Nachrichten,
  \href{https://doi.org/10.1002/mana.19981950106}{195:77--114}, 1998.
\newblock

\bibitem{HieberPrussHeatKernelsMaximal1997}
\textsc{Hieber, M.} and \textsc{Pr{\"u}ss, J.}:
\newblock {\em Heat kernels and maximal ${L}^p$-${L}^q$ estimates for parabolic
  evolution equations}.
\newblock Commun. Partial Differ. Equ.,
  \href{https://doi.org/10.1080/03605309708821314}{22(9-10):1647--1669}, 1997.
\newblock

\bibitem{HillenPainterUserGuidePDE2009}
\textsc{Hillen, T.} and \textsc{Painter, K.~J.}:
\newblock {\em A user's guide to {{PDE}} models for chemotaxis}.
\newblock J. Math. Biol.,
  \href{https://doi.org/10.1007/s00285-008-0201-3}{58(1-2):183--217}, 2009.
\newblock

\bibitem{HorstmannWangBlowupChemotaxisModel2001}
\textsc{Horstmann, D.} and \textsc{Wang, G.}:
\newblock {\em Blow-up in a chemotaxis model without symmetry assumptions}.
\newblock Eur. J. Appl. Math.,
  \href{https://doi.org/10.1017/S0956792501004363}{12(02):159--177}, 2001.
\newblock

\bibitem{HorstmannWinklerBoundednessVsBlowup2005}
\textsc{Horstmann, D.} and \textsc{Winkler, M.}:
\newblock {\em Boundedness vs. blow-up in a chemotaxis system}.
\newblock J. Differ. Equ.,
  \href{https://doi.org/10.1016/j.jde.2004.10.022}{215(1):52--107}, 2005.
\newblock

\bibitem{IshidaEtAlBoundednessQuasilinearKeller2014}
\textsc{Ishida, S.}, \textsc{Seki, K.}, and \textsc{Yokota, T.}:
\newblock {\em Boundedness in quasilinear {{Keller}}--{{Segel}} systems of
  parabolic--parabolic type on non-convex bounded domains}.
\newblock J. Differ. Equ.,
  \href{https://doi.org/10.1016/j.jde.2014.01.028}{256(8):2993--3010}, 2014.
\newblock

\bibitem{JagerLuckhausExplosionsSolutionsSystem1992}
\textsc{J{\"a}ger, W.} and \textsc{Luckhaus, S.}:
\newblock {\em On explosions of solutions to a system of partial differential
  equations modelling chemotaxis}.
\newblock Trans. Am. Math. Soc.,
  \href{https://doi.org/10.2307/2153966}{329(2):819--824}, 1992.
\newblock

\bibitem{KellerSegelInitiationSlimeMold1970}
\textsc{Keller, E.~F.} and \textsc{Segel, L.~A.}:
\newblock {\em Initiation of slime mold aggregation viewed as an instability}.
\newblock J. Theor. Biol.,
  \href{https://doi.org/10.1016/0022-5193(70)90092-5}{26(3):399--415}, 1970.
\newblock

\bibitem{LankeitInfiniteTimeBlowup2020}
\textsc{Lankeit, J.}:
\newblock {\em Infinite time blow-up of many solutions to a general quasilinear
  parabolic-elliptic {{Keller-Segel}} system}.
\newblock Discrete Contin. Dyn. Syst. - S,
  \href{https://doi.org/10.3934/dcdss.2020013}{13(2):233--255}, 2020.
\newblock

\bibitem{LankeitWinklerFacingLowRegularity2019}
\textsc{Lankeit, J.} and \textsc{Winkler, M.}:
\newblock {\em Facing low regularity in chemotaxis systems}.
\newblock Jahresber. Dtsch. Math.-Ver.,
  \href{https://doi.org/10.1365/s13291-019-00210-z}{122:35--64}, 2019.
\newblock

\bibitem{LaurencotMizoguchiFiniteTimeBlowup2017}
\textsc{Lauren{\c c}ot, {\relax Ph}.} and \textsc{Mizoguchi, N.}:
\newblock {\em Finite time blowup for the parabolic--parabolic
  {{Keller}}--{{Segel}} system with critical diffusion}.
\newblock Ann. Inst. Henri Poincar{\'e} C Anal. Non Lin{\'e}aire,
  \href{https://doi.org/10.1016/j.anihpc.2015.11.002}{34(1):197--220}, 2017.
\newblock

\bibitem{MaoLiNote8pProblem2024}
\textsc{Mao, X.} and \textsc{Li, Y.}:
\newblock {\em A note on the {{8$\pi$}} problem of {{J\"ager-Luckhaus}}
  system}.
\newblock Preprint, \href {https://arxiv.org/abs/2405.06315}
  {{arXiv:2405.06315}}, 2024.

\bibitem{MizoguchiFinitetimeBlowupCauchy2020}
\textsc{Mizoguchi, N.}:
\newblock {\em Finite-time blowup in {{Cauchy}} problem of parabolic-parabolic
  chemotaxis system}.
\newblock J. Math. Pures Appl. (9),
  \href{https://doi.org/10.1016/j.matpur.2019.10.004}{136:203--238}, 2020.
\newblock

\bibitem{MizoguchiWinklerBlowupTwodimensionalParabolic}
\textsc{Mizoguchi, N.} and \textsc{Winkler, M.}:
\newblock {\em Blow-up in the two-dimensional parabolic {{Keller}}--{{Segel}}
  system}.
\newblock Preprint.

\bibitem{NagaiBlowupNonradialSolutions2001}
\textsc{Nagai, T.}:
\newblock {\em Blowup of nonradial solutions to parabolic-elliptic systems
  modeling chemotaxis in two-dimensional domains}.
\newblock J. Inequalities Appl.,
  \href{https://doi.org/10.1155/S1025583401000042}{6(1):37--55}, 2001.
\newblock

\bibitem{NagaiEtAlApplicationTrudingerMoser1997}
\textsc{Nagai, T.}, \textsc{Senba, T.}, and \textsc{Yoshida, K.}:
\newblock {\em Application of the {{Trudinger}}--{{Moser}} inequality to a
  parabolic system of chemotaxis}.
\newblock Funkc. Ekvacioj, 40:411--433, 1997.

\bibitem{PainterHillenVolumefillingQuorumsensingModels2002}
\textsc{Painter, K.} and \textsc{Hillen, T.}:
\newblock {\em Volume-filling and quorum-sensing in models for chemosensitive
  movement}.
\newblock Can. Appl. Math. Q., 10(4):501--544, 2002.

\bibitem{PorzioVespriHolderEstimatesLocal1993}
\textsc{Porzio, M.} and \textsc{Vespri, V.}:
\newblock {\em Holder estimates for local solutions of some doubly nonlinear
  degenerate parabolic equations}.
\newblock J. Differ. Equ.,
  \href{https://doi.org/10.1006/jdeq.1993.1045}{103(1):146--178}, 1993.
\newblock

\bibitem{SenbaSuzukiQuasilinearParabolicSystem2006}
\textsc{Senba, T.} and \textsc{Suzuki, T.}:
\newblock {\em A quasi-linear parabolic system of chemotaxis}.
\newblock Abstr. Appl. Anal.,
  \href{https://doi.org/10.1155/AAA/2006/23061}{2006:1--21}, 2006.
\newblock

\bibitem{sugiyama2009varepsilon}
\textsc{Sugiyama, Y.}:
\newblock {\em On $\varepsilon$-regularity theorem and asymptotic behaviors of
  solutions for {{Keller}}--{{Segel}} systems}.
\newblock Siam Journal On Mathematical Analysis,
  \href{https://doi.org/10.1137/080721078}{41(4):1664--1692}, 2009.
\newblock

\bibitem{sugiyama2010varepsilon}
\textsc{Sugiyama, Y.}:
\newblock {\em $\varepsilon$-regularity theorem and its application to the
  blow-up solutions of {{Keller--Segel}} systems in higher dimensions}.
\newblock Journal of Mathematical Analysis and Applications,
  \href{https://doi.org/10.1016/j.jmaa.2009.11.019}{364(1):51--70}, 2010.
\newblock

\bibitem{TaoWinklerBoundednessQuasilinearParabolic2012}
\textsc{Tao, Y.} and \textsc{Winkler, M.}:
\newblock {\em Boundedness in a quasilinear parabolic--parabolic
  {{Keller}}--{{Segel}} system with subcritical sensitivity}.
\newblock J. Differ. Equ.,
  \href{https://doi.org/10.1016/j.jde.2011.08.019}{252(1):692--715}, 2012.
\newblock

\bibitem{TelloWinklerChemotaxisSystemLogistic2007}
\textsc{Tello, J.~I.} and \textsc{Winkler, M.}:
\newblock {\em A chemotaxis system with logistic source}.
\newblock Commun. Partial Differ. Equ.,
  \href{https://doi.org/10.1080/03605300701319003}{32(6):849--877}, 2007.
\newblock

\bibitem{WinklerDoesVolumefillingEffect2009}
\textsc{Winkler, M.}:
\newblock {\em Does a `volume-filling effect' always prevent chemotactic
  collapse?}
\newblock Math. Methods Appl. Sci.,
  \href{https://doi.org/10.1002/mma.1146}{33(1):12--24}, 2009.
\newblock

\bibitem{WinklerCriticalBlowupExponent2018}
\textsc{Winkler, M.}:
\newblock {\em A critical blow-up exponent in a chemotaxis system with
  nonlinear signal production}.
\newblock Nonlinearity,
  \href{https://doi.org/10.1088/1361-6544/aaaa0e}{31(5):2031--2056}, 2018.
\newblock

\bibitem{WinklerGlobalClassicalSolvability2019}
\textsc{Winkler, M.}:
\newblock {\em Global classical solvability and generic infinite-time blow-up
  in quasilinear {{Keller}}--{{Segel}} systems with bounded sensitivities}.
\newblock J. Differ. Equ.,
  \href{https://doi.org/10.1016/j.jde.2018.12.019}{266(12):8034--8066}, 2019.
\newblock

\bibitem{WinklerHowUnstableSpatial2018}
\textsc{Winkler, M.}:
\newblock {\em How unstable is spatial homogeneity in {{Keller--Segel}}
  systems? {{A}} new critical mass phenomenon in two- and higher-dimensional
  parabolic--elliptic cases}.
\newblock Math. Ann.,
  \href{https://doi.org/10.1007/s00208-018-1722-8}{373:1237--1282}, 2019.
\newblock

\bibitem{winkler2022family}
\textsc{Winkler, M.}:
\newblock {\em A family of mass-critical {{Keller-Segel}} systems}.
\newblock Proc. Lond. Math. Soc. Third Ser.,
  \href{https://doi.org/10.1112/plms.12425}{124(2):133--181}, 2022.
\newblock

\bibitem{WinklerDjieBoundednessFinitetimeCollapse2010}
\textsc{Winkler, M.} and \textsc{Djie, K.~C.}:
\newblock {\em Boundedness and finite-time collapse in a chemotaxis system with
  volume-filling effect}.
\newblock Nonlinear Anal. Theory Methods Appl.,
  \href{https://doi.org/10.1016/j.na.2009.07.045}{72(2):1044--1064}, 2010.
\newblock

\end{thebibliography}

\end{document}